\newtheorem{theorem}{Theorem}[section]
\newtheorem*{theorem*}{Theorem}
\newtheorem{proposition}[theorem]{Proposition}
\newtheorem{lemma}[theorem]{Lemma}
\newtheorem{question}[theorem]{Question}
\theoremstyle{definition}
\newtheorem{example}{Example}
\newtheorem*{remark*}{Remark}
\newtheorem{remark}{Remark}
\newcommand{\vertiii}[1]{{\left\vert\kern-0.25ex\left\vert\kern-0.25ex\left\vert #1 
    \right\vert\kern-0.25ex\right\vert\kern-0.25ex\right\vert}}
\newcommand{\R}{\mathbb{R}}
\newcommand{\C}{\mathbb{C}}
\newcommand{\T}{\mathbb{T}}
\newcommand{\Q}{\mathbb{Q}}
\newcommand{\Z}{\mathbb{Z}}
\newcommand{\N}{\mathbb{N}}
\newcommand{\calN}{\mathcal{N}}
\newcommand{\F}{\mathbb{F}}
\newcommand{\AN}{A^{(N)}}
\newcommand\K{{\mathbb{K}}}
\newcommand\Kinf{{\mathbb{K}_\infty}}
\newcommand\Fqt{\mathbb{F}_q((\frac{1}{t}))}
\newcommand\Fq{{\mathbb{F}_q}}
\newcommand\Fp{{\mathbb{F}_p}}
\begin{document}

\begin{frontmatter}[classification=text]
%% EDITOR: this will force the keywords to appear right after the Abstract.
%%   If the abstract is too long and would force the keywords off the
%%   front page, please comment out % [classification=text] above
%%   This way the keywords will be floated on the bottom of the first page
%%   even though the Abstract spills over to the next page.

%%% AUTHOR: Title goes here.  This line is optional.  You must use it
%%   if title has footnote attached or requires nontrivial typesetting,
%%   e.g., inclusion of linebreaks to force nice layout.
\title{Bohr Sets in Sumsets I: Compact Abelian Groups} %% please capitalize all significant words

%%% AUTHOR:
%%% List all authors. If you wish, place grant acknowledgements in \thanks.
%%% In brackets include a short tag for each author.
\author[anhle]{Anh N. Le}

\author[thoangle]{Th\'ai Ho\`ang L\^e \thanks{Supported by National Science Foundation Grants DMS-1702296, DMS-2246921 and a travel gift from the Simons Foundation.}}

%%% AUTHOR: Abstract goes here
\begin{abstract}
Let $G$ be a compact abelian group and $\phi_1, \phi_2, \phi_3$ be continuous endomorphisms on $G$. Under certain natural assumptions on the $\phi_i$'s, we prove the existence of Bohr sets in the sumset $\phi_1(A) + \phi_2(A) + \phi_3(A)$, where $A$ is either a set of positive Haar measure, or comes from a finite partition of $G$. The first result generalizes theorems of Bogolyubov and Bergelson-Ruzsa. As a variant of the second result, we show that for any partition $\Z = \bigcup_{i=1}^r A_i$, there exists an $i$ such that $A_i - A_i + sA_i$ contains a Bohr set for any $s \in \Z \setminus \{ 0 \}$. The latter is a step toward an open question of Katznelson and Ruzsa.
\end{abstract}
\end{frontmatter}

%%% AUTHOR: body of paper starts here

\section{Introduction and statements of results}

%\Anote{Introduce partition results before density results}

%\Anote{I changed the organization of the paper a bit. Combine smaller sections into bigger sections and move proofs of partitions before the density results. We can reverse to the old way later if we want.}

%\textcolor{red}{Move the result about compact groups to the front. Focus on compact groups. Result on Z is treated as an application.}

This paper is the first one in a series in which we study Bohr sets in sumsets. The second paper in the series is titled ``Bohr sets in sumsets II: countable abelian groups'' \cite{Griesmer-Le-Le-countable}.
Let $G$ be an abelian topological group. For a finite set $\Lambda$ of characters (i.e. continuous homomorphisms from $G$ to $S^1 := \{ z \in \C: |z|=1\}$) and $\eta > 0$, the set 
\begin{equation*} \label{eq:bohr1}
B(\Lambda; \eta) := \{ x \in G : | \gamma(x)-1 | < \eta \textup{ for all } \gamma \in \Lambda\}
\end{equation*} 
is called a \textit{Bohr set} or a \emph{Bohr neighborhood of $0$}. We refer to $\eta$ as the \textit{radius} and $|\Lambda|$ as the \textit{rank} (or \textit{dimension}) of the Bohr set. The set $B(\Lambda; \eta)$ is also called a Bohr-$(|\Lambda|, \eta)$ set. 

If $A, B \subset G$, the sumset and difference set of $A$ and $B$ are $A\pm B: = \{a\pm b: a \in A, b \in B \}$. If $c \in \Z$, we define $cA:=\{ ca: a \in A\}$.
The study of Bohr sets in sumsets started with the following important theorem of Bogolyubov \cite{bogo}\footnote{This is reminiscent of Steinhaus' theorem, which says that if $A \subset \R$ has positive Lebesgue measure, then $A-A$ contains an open interval around 0.}.  

\begin{theorem}[Bogolyubov \cite{bogo}]
If $A \subset \Z$ has positive upper Banach density, i.e. 
\[
d^*(A) : = \lim_{N \rightarrow \infty} \sup_{M \in \Z} \frac{|A \cap [M+1,M+N]|}{N} >0,
\] 
then $A + A - A - A$ contains a Bohr set whose rank and radius depend only on $d^*(A)$.
\end{theorem}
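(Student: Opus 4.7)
The plan is to reduce to the finite cyclic group $\Z_N = \Z/N\Z$ for large $N$, run a Fourier-analytic argument producing a Bohr set contained in the fourfold sumset there, and pass to a limit to obtain a Bohr set in $\Z$. From $d^*(A) = \alpha > 0$, I would fix a sequence $N_k \to \infty$ and intervals $I_k \subset \Z$ of length $N_k$ on which $A$ has density close to $\alpha$. A pigeonhole over the four quarter-subintervals then yields $A_k = (A - c_k) \cap [1, N_k/4]$ of density $\alpha' \gtrsim \alpha$ in $[1, N_k/4]$, viewed as a subset of $\Z_{N_k}$ of density $\asymp \alpha$. The purpose of the quarter-interval restriction is that any signed sum $a_1 + a_2 - b_1 - b_2$ with $a_i, b_j \in A_k$ lies in $(-N_k/2, N_k/2)$, so its residue mod $N_k$ coincides with its value in $\Z$.

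The Fourier step is the heart of the argument. Writing $f = 1_{A_k}$ and $\tilde{f}(x) = f(-x)$, Fourier inversion on $\Z_{N_k}$ gives
\[
(f * f * \tilde{f} * \tilde{f})(n) \;=\; \frac{1}{N_k} \sum_{\xi \in \Z_{N_k}} |\hat{f}(\xi)|^4 \, e^{2\pi i \xi n/N_k},
\]
with support in $A_k + A_k - A_k - A_k$. Setting $R_k = \{\xi : |\hat{f}(\xi)| \geq \delta N_k\}$ for $\delta$ a small multiple of $(\alpha')^{3/2}$, Parseval bounds $|R_k| \leq \alpha'/\delta^2 \leq C/\alpha^2$ uniformly in $k$. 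On the Bohr set $B_k := \{n \in \Z_{N_k} : |e^{2\pi i \xi n/N_k} - 1| < \eta \text{ for all } \xi \in R_k\}$, for a suitable $\eta = \eta(\alpha) > 0$, the contribution of $\xi \in R_k$ stays close to $|A_k|^4/N_k$, while the tail is controlled by the bound $|\hat{f}(\xi)|^2 \leq \delta^2 N_k^2$ off $R_k$ together with $\sum_\xi |\hat{f}(\xi)|^2 = N_k|A_k|$. Choosing $\delta,\eta$ correctly makes the full sum strictly positive, so $B_k \subset A_k + A_k - A_k - A_k$ in $\Z_{N_k}$.

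Finally, to lift to $\Z$, enumerate $R_k = \{\xi_{k,1},\dots,\xi_{k,r_k}\}$ and set $\theta_{k,j} = \xi_{k,j}/N_k \in [0,1)$. Since $r_k \leq C/\alpha^2$, I may pass to a subsequence along which $r_k$ equals a constant $r$ and $\theta_{k,j} \to \theta_j$ for each $j$. Put $\Lambda = \{n \mapsto e^{2\pi i \theta_j n} : 1 \leq j \leq r\}$. For any fixed $n \in B(\Lambda; \eta/2)$ and $k$ large, the convergence $\theta_{k,j} \to \theta_j$ forces $|e^{2\pi i \xi_{k,j} n/N_k} - 1| < \eta$, so $n \bmod N_k \in B_k$; simultaneously $N_k > 2|n|$, and the quarter-interval construction promotes the congruence $a_1+a_2-b_1-b_2 \equiv n \pmod{N_k}$ to an equality in $\Z$. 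The shifts $c_k$ cancel in the signed combination, yielding $n \in A+A-A-A$. The rank $r$ and radius $\eta/2$ depend only on $\alpha$, as required. The main technical obstacle is precisely the wraparound in transferring between $\Z_{N_k}$ and $\Z$, resolved by the initial quarter-interval reduction.
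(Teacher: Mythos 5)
Your proof is correct. The paper does not supply its own proof of this theorem --- it is quoted as a result of Bogolyubov --- but your argument is the standard Fourier-analytic one: pass to $\Z_{N_k}$ via a dense quarter-interval restriction to avoid wraparound, produce a Bohr set of bounded rank inside the fourfold sumset there using $(f*f*\tilde f*\tilde f)(n) = \frac{1}{N_k}\sum_\xi |\hat f(\xi)|^4 e^{2\pi i\xi n/N_k}$ with the large-spectrum/Parseval split, and then pass to a subsequential limit of the frequencies $\xi_{k,j}/N_k$ to obtain a Bohr set in $\Z$; this is exactly the ``prove it in $\Z_N$, then a compactness argument'' strategy that the paper (in \S5 and in the remark preceding Theorem~1.5) attributes to Bogolyubov and reuses for its own transfer results.
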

%\Hnote{State the result for $\overline{d}$ or $d^*$?}

While it originated from the study of almost periodic functions, Bogolyubov's theorem is now a standard tool in additive combinatorics. It was used in Ruzsa's proof of Freiman's theorem \cite{ruzsa6} and in Gowers' proof of Szemer\'edi's theorem \cite{gowers}. See \cite{bienvenu-le1, gm1} for a recent variant of Bogolyubov's theorem and its applications. 

% Thus, a natural question is: What is the smallest number of copies of $A$ that will guarantee the existence of a Bohr set? 
The more copies of $A$ that are involved, the more structured the sumset is. This reflects the fact that more convolutions result in smoother functions. Thus, a natural question is: What is the smallest number of copies of $A$ that will guarantee the existence of a Bohr set? In $\Z$, it is known that $A-A$ does not necessarily contain a Bohr set when $A$ is a set of positive upper Banach density, which is a result of Kriz \cite{kriz}. On the other hand, F\o lner \cite{folner1} proved that there is a Bohr set $B$ such that $(A-A) \setminus B$ has density 0. 

Regarding three copies of $A$, Bergelson and Ruzsa \cite{br} proved the following:

\begin{theorem}[Bergelson-Ruzsa \cite{br}] \label{th:br}
Let $r, s, t$ be non-zero integers satisfying $r +s+t = 0$. If $A \subset \Z$ has positive upper Banach density, then $r A+s A+t A$ contains a Bohr set whose rank and radius depend only on $r, s, t$ and $d^*(A)$. 
\end{theorem}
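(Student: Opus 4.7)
The plan is to invoke Furstenberg's correspondence principle and reduce the problem to a triple-correlation estimate on the Kronecker factor of the resulting ergodic system, in the spirit of the ergodic proof of Roth's theorem. By correspondence, from $A \subset \Z$ with $d^*(A) \geq \delta$ one obtains an invertible measure-preserving system $(X, \mathcal{B}, \mu, T)$ and a measurable set $E \subset X$ with $\mu(E) \geq \delta$ satisfying
\[
d^*\bigl(A \cap (A-u) \cap (A-v)\bigr) \;\geq\; \mu\bigl(E \cap T^{-u}E \cap T^{-v}E\bigr)
\]
for all $u, v \in \Z$. Since $r+s+t=0$, the identity $ra+sb+tc = r(a-c)+s(b-c)$ shows that $n \in rA+sA+tA$ iff there exist $u, v, c \in \Z$ with $ru+sv=n$ and $c, c+u, c+v \in A$. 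It thus suffices to produce a Bohr set $B \subset \Z$ such that every $n \in B$ admits $(u,v)$ with $ru+sv=n$ and $\mu(E \cap T^{-u}E \cap T^{-v}E) > 0$.

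Next, let $Z$ be the Kronecker (maximal equicontinuous) factor of $(X, T, \mu)$: a compact abelian group with Haar measure $m$, rotation generator $\alpha \in Z$, and factor map $\pi : X \to Z$. Set $f := \E(1_E \mid Z)$, so $\int_Z f\,dm \geq \delta$. Decomposing $1_E = f \circ \pi + g$ with $g \perp L^2(Z)$ and applying a van der Corput / spectral argument to the $g$-terms yields the classical characteristic-factor identity
\[
\mu(E \cap T^{-u}E \cap T^{-v}E) \;=\; \int_Z f(z)\,f(z + u\alpha)\,f(z + v\alpha)\, dm(z) + \varepsilon(u, v),
\]
with $\varepsilon$ small in $(u, v)$-Ces\`aro mean. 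The main term $I(\beta, \gamma) := \int_Z f(z)f(z+\beta)f(z+\gamma)\,dm$ is continuous on $Z \times Z$ and satisfies $I(0,0) = \int_Z f^3\,dm \geq \delta^3 > 0$ by Jensen. Hence there is a Bohr neighborhood $V \subset Z$ of $0$, whose rank and radius are controlled by $\delta$ via Parseval applied to $f$, on which $I(\beta,\gamma) \geq \delta^3/2$ for $\beta, \gamma \in V$.

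Pulling $V$ back through $\alpha$ gives a Bohr set $B^* := \{u \in \Z : u\alpha \in V\}$ of rank and radius depending only on $\delta$. An elementary lattice argument shows that $rB^* + sB^*$ contains a Bohr set $B \subset \gcd(r,s)\Z$ whose rank and radius depend only on those of $B^*$ and on $r, s$. For each $n \in B$ we average the dynamical triple correlation along the one-parameter family of representations $(u_0 + ks/d,\,v_0 - kr/d)$, $k \in \Z$, of $n=ru+sv$ lying in $B^* \times B^*$ (where $d = \gcd(r,s)$); the Ces\`aro-smallness of $\varepsilon$ together with the lower bound on $I$ forces the Kronecker-side average to exceed $\delta^3/4$, so at least one pair $(u,v)$ in the family yields $\mu(E \cap T^{-u}E \cap T^{-v}E) > 0$. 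Combined with the first paragraph, this gives $n \in rA + sA + tA$, as required.

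The main obstacle is the characteristic-factor assertion in the second paragraph: showing that the Kronecker factor suffices to estimate this two-parameter triple correlation and making the approximation error quantitative in $\delta$. The standard proof, via the spectral theorem together with a van der Corput-type inequality, produces only an $L^2$-averaged error, which is precisely why the final step averages over representations of $n$ rather than establishing pointwise positivity. Tracking the Parseval-based bounds on the rank and radius of $V$ then yields the quantitative dependence of the final Bohr set on $r, s, t$ and $d^*(A)$ claimed in the theorem.
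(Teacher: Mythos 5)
The paper does not prove this theorem; it is cited from Bergelson--Ruzsa \cite{br}, who argue by Fourier analysis in $\Z_N$ and a transference/compactness step (the same ``compact first'' strategy the present paper deploys via \cref{th:roth-khintchine} and \cref{sec:z_and_field} for its generalizations). Your proposal instead runs through Furstenberg correspondence and the Kronecker factor, which is a genuinely different route. The reduction in your first paragraph from the identity $ra+sb+tc = r(a-c)+s(b-c)$, the continuity/Parseval control on the neighborhood $V$ where $I(\beta,\gamma)\geq \delta^3/2$, and the pullback $B^* = \{u : u\alpha\in V\}$ are all sound, and this gives the correct dependence of the rank and radius of $B^*$ on $\delta$.

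The difficulty lies in the last step, and it is not just a matter of elaboration. The van der Corput argument in your second paragraph produces an error $\varepsilon(u,v)$ that is small in the \emph{two-parameter} Ces\`aro mean over $(u,v)\in[-N,N]^2$. But the averaging that you perform to compensate is along the one-parameter sublattice $\{(u_0+ks/d,\, v_0-kr/d)\}$ of representations of a single $n$ --- a density-zero slice of the two-parameter lattice --- and then you further restrict to those $k$ for which the pair lands in $B^*\times B^*$. Two-parameter $L^2$-smallness of $\varepsilon$ does not control the average along a rank-one sublattice, and a fortiori not along a sparse subset of it. To salvage this you would need a \emph{separate} van der Corput / characteristic-factor argument applied directly to the one-parameter averages $\tfrac1K\sum_k \mu(E\cap T^{-(u_0+ks/d)}E\cap T^{-(v_0-kr/d)}E)$: after absorbing $T^{u_0}, T^{v_0}$ into the functions this is a Roth-type average, so the Kronecker factor is indeed characteristic for it, but this must be stated and proved rather than inherited from the two-parameter statement. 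Even then, restricting the $k$-average to $B^*\times B^*$ is not compatible with the cancellation of the error, so one is forced to take the full $k$-average of the main term, whose limit is $\int_H I\bigl((u_0\alpha,v_0\alpha)+h\bigr)\,dm_H(h)$ with $H=\overline{\{(k(s/d)\alpha,-k(r/d)\alpha)\}}\subset Z\times Z$. Bounding this below requires a lower bound on $m_H$ of the relevant translate of $V\times V$, and it is not apparent how such a bound can be made uniform over all systems arising from sets $A$ with $d^*(A)\geq\delta$; this is precisely where the stated quantitative dependence ``only on $r,s,t$ and $d^*(A)$'' is not secured. The ``elementary lattice argument'' for $rB^*+sB^*\supset B$ is another face of the same issue and itself needs an equidistribution input rather than pure algebra. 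The standard way around all of this --- used by Bergelson--Host--McCutcheon--Parreau \cite{bhmp} and by the present paper (\cref{prop:regularity_lemma}) --- is to weight the average by an explicit Fej\'er-type kernel supported on a Bohr set, so that both the error bound and the main-term lower bound become Fourier-analytic and uniformly quantitative; without that kernel device, the ergodic sketch as written does not yield the claimed uniformity.
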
 
The condition $r + s + t = 0$ is easily seen to be necessary, by taking $A = M \Z + 1$ for some $M > |r| + |s| + |t|$, since any Bohr set must necessarily contain 0. In particular, one cannot expect $A + A - A$ to contain a Bohr set. When $(r, s, t) = (1, 1, -2)$, Bergelson-Ruzsa's theorem generalizes Bogolyubov's, since $A+A-2 A \subset A+A-A-A$.

\subsection{Partition results in \texorpdfstring{$\Z$}{Z}}

While the problem of finding Bohr sets in sumsets of sets having positive density has attracted much attention, the analogous question concerning partitions of $\Z$ was little studied until recently. Regarding the latter, there is a well-known problem in additive combinatorics and dynamical systems, which was popularized by Ruzsa \cite[Chapter 5]{ruzsabook} and Katznelson \cite{katznelson}. 

\begin{question} \label{q:kr}
If $\Z = \bigcup_{i=1}^r A_i$, must there exist $i \in \{1, 2, \ldots, r\}$ such that $A_i - A_i$ contains a Bohr set?
\end{question}

In terms of dynamical systems, Question \ref{q:kr} asks if any set of recurrence for minimal \textit{isometries} (also known as a set of \textit{Bohr recurrence}) is also a set of recurrence for minimal \textit{topological systems}. See \cite{gkr} for a detailed account of the history of \cref{q:kr}, as well as its many equivalent formulations.\footnote{In \cite{gkr}, what we call ``Bohr set'' is referred to as ``Bohr neighborhood of $0$.'' Furthermore, a Bohr set in their definition is an arbitrary translate of the Bohr sets defined in this paper.}
%At the moment, this question is still open and only some partial results were obtained \cite{Glasscock_Koutsogiannis_Richter_2021, Griesmer_2021_Katznelson_conj}. 
While Question \ref{q:kr} remains open at the moment and only some partial results were obtained \cite{gkr, griesmer-k}, we do have a positive answer when three copies of $A_i$ are involved. 

%We deduce from Theorem \ref{th:main-partition}, or rather its techniques, the following result in $\Z$: 

\begin{theorem} \label{th:main-kr}
Let $\Z = \bigcup_{i=1}^r A_i$ be a partition.
\begin{enumerate}[label=(\alph*), leftmargin=*]
    \item For any $s_1, s_2 \in \Z \setminus \{0\}$, there exists $i \in \{1, 2, \ldots, r\}$ such that
    the set $s_1 A_i -s_1 A_i + s_2A_i$ contains a Bohr set whose rank and radius depend only on $r$ and $s_1, s_2$. 
\item There exists $i \in \{1, 2, \ldots, r\}$ such that for any $s \in \Z \setminus \{0\}$, the set $A_i - A_i + s A_i$ contains a Bohr set.
\end{enumerate}
\end{theorem}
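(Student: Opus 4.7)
I would deduce both parts of Theorem~\ref{th:main-kr} from the compact group partition theorem announced in the abstract, by passing to the Bohr compactification $G := b\Z$. Recall that $\Z$ embeds densely in $G$, every integer $s$ extends to a continuous endomorphism of $G$, and the intersection of a Bohr set of $G$ with $\Z$ is a Bohr set of $\Z$ with the same rank and comparable radius.

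For part (a), I would refine the covering of $G$ by the closures $\overline{A_i}$ into a measurable partition $G = \bigsqcup_{i=1}^r B_i$ with $B_i \subseteq \overline{A_i}$, and then apply the compact group partition theorem to the triple of endomorphisms $(\phi_1,\phi_2,\phi_3) = (s_1, -s_1, s_2)$. It produces an index $i$ and a Bohr set $U$ of $G$ with $U \subseteq s_1 B_i - s_1 B_i + s_2 B_i \subseteq s_1 \overline{A_i} - s_1 \overline{A_i} + s_2 \overline{A_i}$. The final step is to transfer this from the sumset of closures back to the sumset of the original discrete sets $A_i$; I would do this by a Fourier/counting argument, showing that for $x$ in a slightly shrunk Bohr set, a suitably Følner-averaged count of representations $x = s_1 a - s_1 b + s_2 c$ with $a,b,c \in A_i$ is strictly positive.

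For part (b), the essential novelty is uniformity in $s$. For each $s \in \Z \setminus \{0\}$, let $I_s \subseteq \{1, \ldots, r\}$ denote the set of indices $i$ for which $A_i - A_i + s A_i$ contains a Bohr set; by part (a), each $I_s$ is nonempty. Since $\{1,\ldots,r\}$ is finite, $\bigcap_{s \neq 0} I_s \neq \emptyset$ as soon as the family $\{I_s\}_{s \neq 0}$ has the finite intersection property. To establish this, I would prove a multi-endomorphism strengthening of the compact group partition theorem: given finitely many triples $(\phi_1^{(j)}, \phi_2^{(j)}, \phi_3^{(j)})_{j=1}^k$ of continuous endomorphisms and any measurable partition $G = \bigsqcup B_i$, there is a single index $i$ for which $\phi_1^{(j)}(B_i) + \phi_2^{(j)}(B_i) + \phi_3^{(j)}(B_i)$ contains a Bohr set of $G$ for every $j$. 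Applied to the triples $(1, -1, s)$ with $s$ ranging over a finite $S \subset \Z \setminus \{0\}$, this yields $\bigcap_{s \in S} I_s \neq \emptyset$.

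The main obstacle I anticipate is precisely this multi-endomorphism strengthening. Naively iterating the single-triple theorem, successively refining the partition triple by triple, would exhaust the index budget after $r$ steps. One must therefore enforce Bohr containment for all $k$ triples simultaneously on the same $B_i$ --- perhaps by lifting the problem to the product group $G^k$ via a diagonal embedding, or by merging the Fourier / mass-increment iterations of the single-triple argument into one joint iteration that produces a common Bohr set for all triples at once.
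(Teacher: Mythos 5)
Your high-level plan (reduce the $\Z$ statement to a compact-group statement, then transfer back) is the same as the paper's, but the compact model you choose --- the Bohr compactification $b\Z$ --- creates a transfer problem that you flag but do not resolve, and that I do not think can be resolved along the lines you sketch.

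The issue is that $\Z$ has Haar measure $0$ in $b\Z$, so the sets $A_i$ themselves carry no information; you must pass to the closures $\overline{A_i}$. But $\overline{A_i}$ can easily be all of $b\Z$ (e.g.\ whenever $A_i$ is dense in the Bohr topology of $\Z$, which is the generic situation), in which case the compact-group theorem applied to a partition $\bigsqcup B_i$ with $B_i \subseteq \overline{A_i}$ says nothing about the original $A_i$. Concretely, the containment $U \subseteq s_1 \overline{A_i} - s_1 \overline{A_i} + s_2 \overline{A_i}$ is vacuous when $\overline{A_i} = b\Z$. The ``F\o lner-averaged counting'' you invoke to repair this would have to somehow re-derive the positivity of a representation count inside $\Z$, which is the whole content of the theorem --- so the circularity is hard to avoid. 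The paper sidesteps $b\Z$ entirely: it proves a \emph{finitary} counting statement for partitions of $[-N,N]$ (\cref{prop:counting-brauer}, via the Hales--Jewett machinery of \cref{th:mpc-hom}), embeds $[-N,N]$ into $\Z_{N'}$ with $N'$ large enough that no wraparound occurs, runs the Fourier argument (analogous to \cref{prop:partition-bohr-stronger}) inside $\Z_{N'}$ to produce a Bohr set of $\Z_{N'}$ in the positive-$R$ set, and then takes a limit of the Bohr frequencies $x_j/N'$ to obtain a Bohr set of $\Z$. No closures, no measure-zero embedding.

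For part (b), your finite-intersection-property reduction is formally sound (a family of nonempty subsets of $\{1,\ldots,r\}$ with the FIP has nonempty total intersection), but it rests entirely on a ``multi-endomorphism strengthening'' of the compact-group partition theorem, which you leave unproved and flag as the main obstacle. The paper does not need such a strengthening. It instead proves the counting result for the full \emph{Brauer configuration} $\{y,\,x,\,x+y,\,\ldots,\,x+ky\}$ and exploits the nesting property: if $\ell \leq k$ and the length-$(k{+}1)$ Brauer configuration is monochromatic, then so is the sub-configuration $\{y,\,x,\,x+\ell y\}$. Therefore the color $i = f(k)$ produced by \cref{prop:counting-brauer} for parameter $k$ also works for every $\ell \leq k$, and a pigeonhole over $k$ yields a single $i$ working for all $\ell$ (\cref{prop:counting-diagonal}). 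This monotonicity in $k$ is exactly the mechanism that replaces your proposed multi-triple simultaneity, and it comes ``for free'' from the Hales--Jewett argument. You would also, even after proving your strengthening on $b\Z$, still face the transfer obstruction of the previous paragraph for every $s \in S$ at once.

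In short: the finitary $\Z_{N'}$-embedding is not a cosmetic choice in the paper --- it is what makes the transfer legitimate --- and the Brauer nesting trick is the missing idea for uniformity in $s$.
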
 
%In other words, the second part of \cref{th:main-kr} says that we can find a single $i$ that works for any $c$, but we no longer have control on the rank and radius of the Bohr set.

  \cref{th:main-kr} highlights the difference between partition and density since, as we mentioned earlier, there is a set $A \subseteq \Z$ of positive density such that $A - A + A$ does not contain a Bohr set. 
  
  The expression $s_1 A_i - s_1 A_i + s_2 A_i$ is related to Rado's condition on partition regularity \cite{rado}. Recall that an equation $s_1x_1 + s_2x_2 + \cdots + s_{\ell} x_{\ell} =0$ with coefficients in $\Z \setminus \{0\}$ is \textit{partition regular} if under any finite partition (or coloring) of $\Z \setminus \{0\}$, there exists a monochromatic solution $(x_1,x_2,\ldots, x_{\ell})$. Rado's theorem says that the equation $s_1 x_1 + s_2 x_2 + \cdots + s_{\ell} x_{\ell} =0$ is partition regular if and only if $\{s_1, \ldots, s_{\ell}\}$ satisfies the following condition:
  %so called \emph{Rado's condition}: 
  There exists a nonempty set $J \subset \{1, \ldots, \ell\}$ such that $\sum_{i \in J} s_i =0$. 
%   \textcolor{red}{To be fixed...}
Using the facts that $(s_1 + \cdots + s_{\ell}) A \subseteq s_1 A + \cdots + s_{\ell} A$, and a Bohr set must contain $0$, \cref{th:main-kr}(a) implies that
for $\ell \geq 3$ and $s_1, \ldots, s_{\ell} \in \Z \setminus \{0\}$, the following are equivalent:
\begin{enumerate}
 \item For any partition $\Z = \bigcup_{i=1}^r A_i$, there exists $i \in \{1, \ldots, r\}$ such that $s_1 A_i + \cdots + s_{\ell} A_i$ contains a Bohr set.
\item $\{s_1, \ldots, s_{\ell}\}$ satisfies  Rado's condition above.
\end{enumerate}
%     %   There is a nonempty subset $J \subseteq \{1, \ldots, \ell\}$ such that $\sum_{i \in J} s_i = 0$.
%   \end{enumerate}
%   It follows easily from \cref{th:main-kr}(a) that the tuples $(s_1, \ldots, s_{\ell})$ (where $\ell \geq 3$) that guarantee the existence of Bohr sets in $s_1 A_i + \ldots + s_{\ell} A_i$ for some $i \in \{1, \ldots, r\}$, are precisely the ones satisfying Rado's condition.

A novelty of \cref{th:main-kr}(b) is that it guarantees a single set $A_i$ that works for every coefficient $s$ (on the other hand, we do lose control on the rank and radius of the Bohr set). When $s$ is very large, the set $sA_i$ is small and so its contribution to the sum diminishes. While there is no consensus on what the answer to \cref{q:kr} should be, \cref{th:main-kr}(b) provides evidence that the answer to \cref{q:kr} is either positive or very delicate.

In \cite[Table 1, p. 8]{gkr}, Glasscock-Koutsogiannis-Richter summarized results on Bohr sets in sumsets, pertaining to both density and partition. Our \cref{th:main-kr} fills in the blank on the Syndeticity\footnote{A subset $A$ of a group $G$ is called \textit{syndetic} if $G$ can be covered by finitely many translates of $A$.} column and $rA + sA+ tA$ row of their table.

In forthcoming work \cite{lw}, Wathodkar and the second author extend part (b) of Theorem \ref{th:main-kr} to accomodate more general sumsets. Their result says that for any partition $\Z = \bigcup_{i=1}^r A_i$, there exists $i \in \{1, 2, \ldots, r\}$ such that for any $s_1, s_2 \in \Z \setminus \{0\}$, the set $s_1 A_i - s_1 A_i + s_2 A_i$ contains a Bohr set.

%While a positive answer to Question \ref{q:kr} is perhaps too strong to hope for, \cref{cor:kr} provides evidence that \cref{q:kr} is very delicate. \Anote{Corollary \ref{cor:kr} provides an evidence that the answer to \ref{q:kr} may be positive.}

\subsection{Results in compact groups}

Bogolyubov's theorem has been generalized to other groups as well (in more general groups, the upper Banach density $d^*$ can be defined in terms of  \textit{F\o lner sequences} or \textit{invariant means}). F\o lner \cite{folner1, folner2} extended Bogolyubov's theorem to all abelian groups. Answering a question of Hegyv\'ari-Ruzsa \cite{hr}, Bj\"orlund-Griesmer \cite{bg} proved that in any countable discrete abelian group $G$, for any $A \subset G$ with $d^*(A) >0$, for ``many'' $a \in A$, the set $A+A-A-a$ contains a Bohr set whose rank and radius depend only on $d^*(A)$. Very recently, Griesmer \cite{griesmer-br} generalized Theorem \ref{th:br} to all countable discrete abelian groups, though his proof does not give effective bounds for the rank and radius of the Bohr set in question.

%In \cite{folner1, folner2}, F\o lner extended Bogolyubov's theorem to all discrete abelian groups. \footnote{In general abelian groups, the upper Banach density can be defined in terms of  \textit{F\o lner sequences} or \textit{invariant means}.} Answering a question of Hegyv\'ari-Ruzsa \cite{hr}, Bj\"orlund-Griesmer \cite{bg} proved that in any countable discrete abelian group $G$, for any $A \subset G$ with $d^*(A) >0$, for ``many'' $a \in A$, the set $A+A-A-a$ contains a Bohr set whose rank and radius depend only on $d^*(A)$. Very recently, Griesmer \cite{griesmer-br} generalized Bergelson and Ruzsa's theorem (\cref{th:br}) to all countable discrete abelian groups, though his proof does not give effective bounds for the rank and radius of the Bohr set in question.

Bergelson-Ruzsa and Bogolyubov first proved their theorems in the cyclic group $\Z_N$, and the statements in $\Z$ follow from a compactness argument. Likewise, in Bj\"orlund-Griesmer \cite{bg} and Griesmer \cite{griesmer-br}, certain compact groups (namely \textit{Bohr compactifications} and \textit{Kronecker factors}) play a prominent role. In view of this ``compact first'' strategy, the main goal of this paper is, in fact, to study the existence of Bohr sets in sumsets of compact groups. Under this investigation, \cref{th:main-kr} arises as an application of our general method. %In a subsequent paper, we will study the existence of Bohr sets in arbitrary discrete groups by transferring our results from compact groups.

Another feature of our work is the consideration of continuous homomorphisms $\phi:G \rightarrow G$ and the image $\phi(A)$ rather than just dilations $c A$. This point of view leads to a wider range of applications, for example, linear maps on vector spaces and multiplication by an element in a ring (see Theorems \ref{th:nf} and \ref{th:ff} below). This new perspective was also adopted in recent work of Ackelsberg-Bergelson-Best \cite{abb} on Khintchine-type recurrence for actions of an abelian group (Theorem \ref{th:abb} below). 

Our main result on Bohr sets in sumsets arising from partitions is as follows. 

%We prove \cref{th:main-kr} by adapting the proof of \cref{th:br} of Bergelson and Ruzsa. This method is quite general and so we can further attune it to obtain the following result regarding homomorphisms on compact abelian groups.

\begin{theorem} \label{th:main-partition}
Let $G$ be a compact abelian group with normalized Haar measure $\mu$ and let $\phi_1, \phi_2: G \rightarrow G$ be continuous homomorphisms satisfying
\begin{enumerate}[label=(\alph*), leftmargin=*]
\item $\phi_1, \phi_2$ are commuting, and
\item $\phi_1(G), \phi_2(G)$ have finite index in $G$.
\end{enumerate} 
Let $G = \bigcup_{i=1}^r A_i$ be a partition of $G$ into measurable sets. Then for some $1 \leq i \leq r$, 
\[
\phi_1(A_i) - \phi_1(A_i) + \phi_2(A_i)
\] 
contains a Bohr-$(k, \eta)$ set, where $k$ and $\eta$ depend only on $r$, $[G:\phi_1(G)]$ and $[G:\phi_2(G)]$.
\end{theorem}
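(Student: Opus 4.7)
I plan to prove the theorem via Fourier analysis on $G$, in the style of Bogolyubov and Bergelson--Ruzsa, combined with a pigeonhole over the partition. Let $\widehat G$ denote the Pontryagin dual. For each endomorphism $\phi_j$, let $\widehat\phi_j:\widehat G\to\widehat G$, $\gamma\mapsto\gamma\circ\phi_j$ be its adjoint; the finite-index hypothesis translates to $\ker\widehat\phi_j$ being a finite subgroup of order $d_j:=[G:\phi_j(G)]$, and commutativity of $\phi_1,\phi_2$ passes to the adjoints. For each $i$, the basic object is the nonnegative Borel measure $M_i$ obtained by pushing $(\mu|_{A_i})^{\otimes 3}$ forward under $(a_1,a_2,a_3)\mapsto\phi_1(a_1)-\phi_1(a_2)+\phi_2(a_3)$; it is supported in $\phi_1(A_i)-\phi_1(A_i)+\phi_2(A_i)$, and a direct computation gives
\[
\widehat{M_i}(\gamma)=|\widehat{1_{A_i}}(\gamma\circ\phi_1)|^2\,\widehat{1_{A_i}}(\gamma\circ\phi_2).
\]

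For a threshold $\delta>0$ to be chosen, set $\Lambda_0:=\{\gamma\in\widehat G:|\widehat{1_{A_i}}(\gamma)|\geq\delta\text{ for some }i\}$; by Parseval and $\sum_i\mu(A_i)=1$, $|\Lambda_0|\leq\delta^{-2}$. Inflate to $\Lambda:=\widehat\phi_1^{-1}(\Lambda_0)\cup\widehat\phi_2^{-1}(\Lambda_0)$, of size at most $(d_1+d_2)\delta^{-2}$, and take a nonnegative Fej\'er-type kernel $K_\eta$ with $\widehat{K_\eta}\geq 0$ supported in (a symmetric enlargement of) $\Lambda$, $\int K_\eta\,d\mu=1$, and $\widehat{K_\eta}(\gamma)$ close to $1$ on its support. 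Then for $x\in B(\Lambda,\eta)$,
\[
\sum_{i=1}^r(M_i*K_\eta)(x)=\sum_{\gamma\in\Lambda}\widehat{K_\eta}(\gamma)\Big(\sum_i\widehat{M_i}(\gamma)\Big)\gamma(x).
\]
The $\gamma=0$ term is $\sum_i\mu(A_i)^3\geq r^{-2}$ by the power-mean inequality. For $\gamma\in\Lambda\setminus\{0\}$, $|\gamma(x)-1|<\eta$, and $\big|\sum_i\widehat{M_i}(\gamma)\big|$ is controlled by Cauchy--Schwarz together with the identity $\sum_i|\widehat{1_{A_i}}(\xi)|^2\leq 1$ (arising from $F:=\sum_i1_{A_i}*1_{-A_i}\geq 0$ with $F(0)=1$ and $\widehat F=\sum_i|\widehat{1_{A_i}}|^2$). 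Contributions from $\gamma\notin\Lambda$ vanish by the Fourier support of $K_\eta$. With $\delta,\eta$ chosen small enough in terms of $r,d_1,d_2$, this produces $\sum_i(M_i*K_\eta)(x)\geq r^{-2}/2$ on $B(\Lambda,\eta)$.

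To isolate a single $A_{i^*}$, I refine to a smaller Bohr set $B^*=B(\Lambda^*,\eta^*)$ on which each $(M_i*K_\eta)$, a trigonometric polynomial with frequencies in $\Lambda$, differs from $(M_i*K_\eta)(0)$ by at most $r^{-3}/4$; the new parameters $|\Lambda^*|,\eta^*$ still depend only on $r,d_1,d_2$. Pigeonholing the lower bound at $x=0$ yields an $i^*$ with $(M_{i^*}*K_\eta)(0)\geq r^{-3}/2$, and near-constancy promotes this to $(M_{i^*}*K_\eta)(x)>0$ throughout $B^*$. Finally, to convert convolution positivity into set-theoretic containment, I use that each $\phi_j$ is an open map onto its clopen finite-index image, so $M_{i^*}$ is absolutely continuous with respect to Haar on $\phi_1(G)+\phi_2(G)$ with density supported exactly on $\phi_1(A_{i^*})-\phi_1(A_{i^*})+\phi_2(A_{i^*})$ up to null sets; choosing $\mathrm{supp}\,K_\eta$ inside a very small Bohr set and using Steinhaus' theorem to show the sumset is closed under sufficiently small shifts then gives $B^*\subseteq\phi_1(A_{i^*})-\phi_1(A_{i^*})+\phi_2(A_{i^*})$ after a final harmless shrinking.

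The principal obstacle is the pigeonhole-to-a-single-$i^*$ step: a direct application of $\sum_i(M_i*K_\eta)\geq c$ only produces a winning index that varies with $x$; the remedy of refining the Bohr set so each trigonometric polynomial is essentially constant there must be carried out while preserving the dependence of rank and radius on $r,d_1,d_2$ alone. A secondary subtlety is the final passage from convolution positivity to honest set-theoretic containment, which is automatic in discrete groups but in the compact setting requires using openness of $\phi_j$, Steinhaus' theorem, and care about null sets.
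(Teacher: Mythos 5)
Your approach is genuinely different from the paper's, which is Fourier-free at the crucial step: the paper proves a counting inequality for monochromatic Brauer-type configurations $\{\psi(y),\,x,\,x+\phi_1(y),\dots,x+\phi_k(y)\}$ via the multidimensional Hales--Jewett theorem (their Theorem 1.10 and Proposition 3.3), then uses a Fourier continuity estimate (Lemma 2.12) only to spread that count across a Bohr set of shifts, and finally converts positivity of $R_i(w)=\iint 1_{A_i}(\psi(y))1_{A_i}(x+w)1_{A_i}(x+\phi(y))\,d\mu\,d\mu$ into exact membership $\phi_1(w)\in\phi_1(A_i)+\phi_2(A_i)-\phi_1(A_i)$ by the algebraic identity $\phi_1(x+w)+\phi_2(\psi_2(y))-\phi_1(x+\psi_1(y))=\phi_1(w)$. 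Your proposal attempts to replace all of this by a one-shot Bogolyubov-style Fourier argument, and this does not go through.

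The central gap is in the lower bound $\sum_i(M_i*K_\eta)(x)\geq r^{-2}/2$ on $B(\Lambda,\eta)$. Unlike Bogolyubov's $\widehat{1_A*1_A*1_{-A}*1_{-A}}=|\widehat{1_A}|^4$, the quantity $\widehat{M_i}(\gamma)=|\widehat{1_{A_i}}(\gamma\circ\phi_1)|^2\,\widehat{1_{A_i}}(\gamma\circ\phi_2)$ is \emph{not} nonnegative real; the factor $\widehat{1_{A_i}}(\gamma\circ\phi_2)$ carries an uncontrolled phase. Your Cauchy--Schwarz estimate only gives $\bigl|\sum_i\widehat{M_i}(\gamma)\bigr|\leq 1$, which is not small, and since $\|\widehat{K_\eta}\|_1=\|K_\eta\|_\infty$ is large (of order $(C_0\eta)^{-|\Lambda|}$), the nonzero frequencies can overwhelm the $\gamma=0$ term $\sum_i\mu(A_i)^3\geq r^{-2}$. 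There is no purely Fourier-analytic reason they cannot: this is exactly the obstruction that prevents a Bogolyubov-type proof for three copies, and is why both Bergelson--Ruzsa (via Szemer\'edi plus Varnavides) and this paper (via Hales--Jewett) inject a genuinely combinatorial counting theorem at this point. Notably, your argument up to the pigeonhole uses nothing about the partition beyond $\sum_i\mu(A_i)^3\geq r^{-2}$, yet a single set of positive measure need not have $A-A+A$ contain a Bohr set (the paper's remark with $G=\Z_{2N}$, $A=$ odd residues). An argument that does not see the difference between a partition and a dense set cannot succeed here.

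A second, independent problem is the final passage from $(M_{i^*}*K_\eta)(x)>0$ to $x\in\phi_1(A_{i^*})-\phi_1(A_{i^*})+\phi_2(A_{i^*})$. Positivity of the convolution at $x$ only says that a neighborhood of $x$ of the size of $\operatorname{supp}K_\eta$ has positive $M_{i^*}$-mass, i.e., that $x$ is \emph{close} to the sumset; it does not place $x$ in it, and the fiber $\Psi^{-1}(x)\cap A_{i^*}^3$ is generically a $\mu^{\otimes3}$-null set, so no amount of shrinking $K_\eta$ closes this gap. The sketch via openness and Steinhaus is not carried out, and it is not clear it can be made to work: $\phi_1(A_{i^*})-\phi_1(A_{i^*})$ contains a neighborhood of $0$ only inside the subgroup $\phi_1(G)$, and the sumset is in general neither open nor invariant under small translations in $\phi_1(G)+\phi_2(G)$. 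The paper sidesteps this completely by choosing the functional $R_i(w)$ so that $R_i(w)>0$ already forces the existence of an exact monochromatic configuration, which then produces $\phi_1(w)$ in the sumset by the homomorphism identity, with no approximate-support issues at all. I would suggest you abandon the pushforward-measure framework and instead look for a counting theorem for monochromatic configurations of the form $\{\psi(y),\,x,\,x+\phi(y)\}$ (which is where a Ramsey-theoretic ingredient such as Hales--Jewett or a density theorem must enter), and then use the Fourier continuity lemma only for the transfer to Bohr sets.
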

\begin{remark}\label{remark:first_one} \
\begin{itemize}[leftmargin=*]
\item If $\mu(A) >0$, then $A + A - A$ is not guaranteed to contain a Bohr set. For a counterexample, take $G = \Z_{2N}$ for some large $N$ and $A=\{a \in \Z_{2N}: a \textup{ is odd} \}$. In particular, the analogous version of \cref{th:main-partition} for sets of positive measure fails. 

\item We do not know if the commuting condition can be removed entirely, though it can be slightly relaxed (see \cref{th:main-partition-true}). For example, commutativity is not required when $\phi_1$ or $\phi_2$ is an automorphism (see Remark \ref{rk:auto2}).
% \Hnote{Are there other interesting special cases?}
%\item \Hnote{The finite index condition should be necessary, but why????} 
%\Anote{If $\phi_i(G)$ do not have finite indices, we have a counterexample. Take $G = \T \times \Z_p$ and define $\phi_i: G \to G$ by $\phi_i(t, n) = (0, n)$. Then $\phi_1(G) + \phi_2(G) - \phi_1(G) \subseteq \{0\} \times \Z_p$ which has measure $0$ and thus cannot contain any Bohr set.}

\item The finite index condition on $\phi_1(G)$ cannot be removed, by taking for example $\phi_1 = 0$ and $\phi_2(x) = x$. On the other hand, we do not know whether the finite index condition on $\phi_2(G)$ can be removed. If we let $\phi_2 = 0$, then the situation amounts to \cref{q:kr} itself. 

%\Anote{If $\phi_i(G)$ do not have finite index, a trivial counterexample is $\phi_i(G) = 0$. In this case, $\phi_1(A_i) + \phi_2(A_i) - \phi_1(A_i) = 0$ cannot be a Bohr set. We may want a more interesting example.}
\end{itemize}
\end{remark}

We now turn our attention to density results. In compact abelian groups, the Haar measure plays the role of the upper Banach density.  

\begin{theorem} \label{th:main-density}
Let $G$ be a compact abelian group with normalized Haar measure $\mu$ and $\phi_1, \phi_2, \phi_3: G \rightarrow G$ be continuous homomorphisms satisfying
\begin{enumerate}[label=(\alph*), leftmargin=*]
\item $\phi_1 + \phi_2 + \phi_3 = 0$,
\item $\phi_1, \phi_2, \phi_3$ are commuting,
\item $\phi_1(G), \phi_2(G), \phi_3(G)$ have finite index in $G$.
\end{enumerate} 
Let $A \subseteq G$ be a measurable subset with $\mu(A) = \delta >0$. Then 
\[
\phi_1(A) + \phi_2(A) + \phi_3(A)
\] contains a Bohr-$(k, \eta)$ set, where $k$ and $\eta$ depend only on $\delta$ and the indexes $[G:\phi_i(G)]$ $(1 \leq i \leq 3)$.
\end{theorem}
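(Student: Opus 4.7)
The plan is a Fourier-analytic reduction to a Bohr-Khintchine-type recurrence for the triple autocorrelation. Using $\phi_3 = -(\phi_1 + \phi_2)$, the substitution $(a_1, a_2, a_3) = (w + u, w + v, w)$ in $A^3$ converts the equation $\phi_1 a_1 + \phi_2 a_2 + \phi_3 a_3 = x$ into $\phi_1 u + \phi_2 v = x$ (the $w$-term cancels via $\phi_1 + \phi_2 + \phi_3 = 0$) subject to the side condition $w \in A \cap (A - u) \cap (A - v)$. Setting $\Phi(u, v) := \phi_1 u + \phi_2 v$ and $C(u, v) := \mu(A \cap (A - u) \cap (A - v))$, one obtains
\[
\phi_1(A) + \phi_2(A) + \phi_3(A) \supseteq \Phi\bigl(\{(u, v) \in G^2 : C(u,v) > 0\}\bigr),
\]
so it suffices to produce a Bohr set $B \subseteq G$ contained in the right-hand side.

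The heart of the argument is a lower bound of the form $C(u, v) \geq \tfrac12 \delta^3$ on some Bohr set $B_0 \subseteq G^2$ with rank and radius depending only on $\delta$. Writing $f := \mathbf{1}_A$ and Fourier-expanding,
\[
C(u,v) = \sum_{\beta, \gamma \in \hat G} \overline{\hat f(\beta+\gamma)}\, \hat f(\beta)\, \hat f(\gamma)\, \beta(u) \gamma(v),
\]
the ``degenerate'' Fourier lines $\{\beta = 0\}$, $\{\gamma = 0\}$, and $\{\beta + \gamma = 0\}$ contribute $\delta r_A(v)$, $\delta r_A(u)$, and $\delta r_A(u - v)$ respectively, where $r_A(w) := \sum_\chi |\hat f(\chi)|^2 \chi(w)$. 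These autocorrelations have non-negative Fourier coefficients, putting us in Bogolyubov's setting: on the Bohr set $B(\Lambda_0; \eta) \subseteq G$ defined by the $\epsilon$-spectrum $\Lambda_0 := \{\chi : |\hat f(\chi)| \geq \epsilon\}$ (of size at most $\delta / \epsilon^2$ by Parseval), each $r_A$ is $\geq \delta^2 - O(\epsilon)$, and after inclusion-exclusion at the origin the degenerate part of $C$ is $\geq \delta^3 - O(\delta \epsilon)$ on $B_0 := B(\Lambda_0; \eta)^2$. The \textbf{main obstacle} is controlling the non-degenerate part (indexed by $\beta, \gamma, \beta + \gamma$ all non-zero) in $L^\infty$ on $B_0$: the naive triangle inequality bound $\sum |\hat f(\beta)||\hat f(\gamma)||\hat f(\beta+\gamma)|$ can diverge when $\hat G$ is infinite. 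The remedy I would pursue is to enlarge $\Lambda_0$ to be closed under the combination of characters appearing (e.g., including $\Lambda_0 + \Lambda_0$) and to refine the splitting, using the $L^2$ bound $\sum |\hat f(\beta)|^2 |\hat f(\gamma)|^2 |\hat f(\beta+\gamma)|^2 \leq \delta^4$ and Parseval $\sum_\alpha |\hat f(\alpha)|^2 = \delta$ to absorb the truly high-frequency contributions.

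For the final step, the finite-index hypotheses give a clean push-forward. The closed subgroup $H := \phi_1(G) + \phi_2(G)$ has index at most $\min\{[G : \phi_1(G)], [G : \phi_2(G)]\}$ in $G$, and $\Phi : G^2 \to H$ is a surjective continuous homomorphism. By duality (using that $H^\perp$ is finite and that the dual maps $\phi_i^* : \hat{G} \to \hat{G}$ have finite kernels of order $[G:\phi_i(G)]$), the image $\Phi(B_0)$ contains a Bohr set $B$ of $G$, whose defining characters include $H^\perp$ (forcing $B \subseteq H$ once $\eta$ is small enough in terms of $[G:H]$) together with pullbacks of those of $B_0$; its rank and radius depend only on those of $B_0$ and on $[G : \phi_1(G)]$, $[G : \phi_2(G)]$. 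For any $x \in B$, choosing $(u, v) \in B_0$ with $\Phi(u, v) = x$ gives $C(u, v) \geq \tfrac12 \delta^3 > 0$, hence a witness $w \in A \cap (A - u) \cap (A - v)$; then $(a_1, a_2, a_3) = (w + u, w + v, w) \in A^3$ satisfies $\phi_1 a_1 + \phi_2 a_2 + \phi_3 a_3 = x$, completing the proof.
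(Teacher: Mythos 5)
The key intermediate claim --- a lower bound $C(u,v) \geq \tfrac12\delta^3$ on a Bohr set $B_0 \subseteq G^2$ whose rank and radius depend only on $\delta$ --- is actually false, not merely unproven. Note $C(u,v) \leq \mu(A \cap (A-u)) =: r_A(u)$, so if $r_A(u_0) = 0$ for a single $u_0$ in the $u$-projection of $B_0$ then $C(u_0,v) = 0$ for every $v$. But $\{u : r_A(u) > 0\} = A - A$, and by Kriz's theorem (cited in the introduction) this set need not contain a Bohr set with bounds depending only on $\delta$; the remark after \cref{th:main-density} gives the $\F_2^n$ version via Green's construction, a set of density $1/4$ whose difference set contains no subspace of codimension below $\sqrt{n}$, while your $B(\Lambda_0;\eta)$ would have codimension $O_{\delta,\epsilon}(1)$. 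The same example shows your sub-claim ``each $r_A$ is $\geq \delta^2 - O(\epsilon)$ on $B(\Lambda_0;\eta)$'' is wrong: the tail $\sum_{\chi \notin \Lambda_0} |\widehat f(\chi)|^2$ is only bounded by $\delta$, not $O(\epsilon)$, because the Fourier weight $|\widehat f|^2$ leaves no spare factor to pull out --- unlike $|\widehat f|^4 \leq \epsilon^2|\widehat f|^2$ in the four-fold Bogolyubov argument. So the ``main obstacle'' you flag is not the only gap; the degenerate estimate already fails.

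The structural issue is that you ask for $C(u,v) > 0$ pointwise over a full Bohr set in $G^2$, whereas all that holds (and all that is needed) is that for each $x$ in some Bohr set of $G$, \emph{some} $(u,v)$ in the fiber $\Phi^{-1}(x)$ has $C(u,v) > 0$. Averaging over the fiber is exactly what the paper's argument supplies: it works with $R(w) = \iint f(x+w)\,f(x+\psi_1(y))\,f(x+\psi_2(y))\,d\mu(x)\,d\mu(y)$ rather than the pointwise $C$. The extra $y$-integration allows the Cauchy--Schwarz argument of \cref{lem:bourgain_lemma_2} to close, yielding $|R(w)-R(0)| \ll \|\widehat f - \widehat{f_w}\|_\infty$; there is no pointwise analogue of this bound for $C$. \cref{lem:translate} then gives a Bohr set of $w$'s where the right-hand side is small, and the anchor $R(0) = J(f) > 0$ is supplied by \cref{th:roth-khintchine}, which the paper proves via the Bourgain-style energy-increment regularity lemma (\cref{prop:regularity_lemma}) --- not a one-shot spectrum split. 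Your proposed remedy (enlarging $\Lambda_0$, using $L^2$ bounds to absorb high frequencies) points in the right direction but, carried out correctly, amounts to re-deriving the regularity lemma; without the iteration it does not converge.
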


\begin{remark}\label{remark:density_commutative}\
\begin{itemize}[leftmargin=*]
\item The condition $\phi_1 + \phi_2 + \phi_3 = 0$ cannot be removed. For a counterexample, take $G=\Z_N$ for some large $N$ and $A = \{1, \cdots, \lfloor N/10 \rfloor\}$. Then $A+A+A$ does not contain 0, and hence is not a Bohr set.
\item We do not know if the condition on commutativity can be removed entirely, though it can be weakened (see Theorem \ref{th:main-density-true}). For example, commutativity is not required when one of $\phi_1, \phi_2$ and $\phi_3$ is an automorphism (see Remark \ref{rk:auto}).
\item The finite index condition cannot be removed. Indeed, we can take $G = \F_2^n$ for some large $n$, $\phi_1(x)= x$, $\phi_2(x)=-x, \phi_3(x)=0$. In this setting, Bohr sets are simply vector subspaces. A construction of Green \cite[Theorem 9.4]{green-ff} gives a set $A$ of size $\geq |G|/4$ such that any subspace contained in $A-A$ must have codimension $\geq \sqrt{n}$.  
\end{itemize}
\end{remark}

In part II of this series \cite{Griesmer-Le-Le-countable} with Griesmer, by means of transference principles, we prove analogues of Theorems \ref{th:main-density} and \ref{th:main-partition} for countable discrete abelian groups. 
%As an application, \cref{th:main-density} 
%and the transference principle presented in \cite{griesmer-br} 
In particular, we obtain an effective version of the aforementioned result of Griesmer \cite{griesmer-br}. %We plan to pursue this idea in a subsequent paper.

\subsection{Number-theoretic consequences} 
As mentioned earlier, the fact that we accommodate homomorphisms in \cref{th:main-partition} and \cref{th:main-density} enables us to generalize \cref{th:br} and \cref{th:main-kr} to number fields and function fields. 
%The framework of \cref{th:main-density} \cref{th:main-partition} enables us  generalize Theorems \ref{th:br} and \ref{th:main-kr} to number fields and function fields. 

In the following, for a subset $A$ of a ring $R$ and $c \in R$, we write 
\begin{equation} \label{eq:ring1}
cA = \{ca: a \in A\}
\end{equation}
and
\begin{equation} \label{eq:ring2}
    A/c = \{ b\in R: bc \in A\}.
\end{equation}
The next theorem is true for any number field, but we only state for $\Z[i]$ for simplicity.

\begin{theorem} \label{th:nf}
%Let $K$ be an algebraic number field of degree $d$ and $\OK$ be its ring of integers. As a group, we identify $\OK$ with $\Z^d$ (after fixing an integral basis of $\OK$). 
    Let $s_1, s_2, s_3 \in \Z[i] \setminus \{0\}$ such that $s_1 + s_2 + s_3 = 0$. 
		\begin{enumerate}[label=(\alph*), leftmargin=*]
\item		If a set $A \subseteq \Z[i]$ has positive upper density, i.e.
    \[
        \overline{d}(A) := \limsup_{N \to \infty} \frac{|A \cap [-N, N]^d|}{(2N + 1)^d} = \delta > 0,
    \]
    then $s_1 A + s_2 A + s_3 A$ contains a $(k, \eta)$-Bohr set in $\Z[i]$, where $k$ and $\eta$ depend only on $s_1, s_2, s_3$ and $\delta$. 
		
\item If $\Z[i] = \bigcup_{j=1}^r A_j$, then for some $j \in \{1, 2, \ldots, r\}$, $s_1 A_j - s_1 A_j + s_2 A_j$ contains a $(k, \eta)$-Bohr set in $\Z[i]$, where $k$ and $\eta$ depend only on $s_1, s_2$ and $r$.

\item  If $\Z[i] = \bigcup_{j=1}^r A_j$, then there exists $j \in \{1, 2, \ldots, r\}$ such that $A_j - A_j + s A_j$ contains a Bohr set for any $s \in \Z[i] \setminus \{0\}$.

\end{enumerate}	
Here, as a group, we identify $\Z[i]$ with $\Z^2$.
\end{theorem}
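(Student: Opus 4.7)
The plan is the ``compact first'' transference alluded to in the introduction: prove each statement in a suitable compact abelian group and pull the conclusion back to $\Z[i]$. Identifying $\Z[i]$ with $\Z^2$ as a discrete abelian group, multiplication by $s = a + bi \in \Z[i] \setminus \{0\}$ corresponds to the $\Z$-linear map $\begin{pmatrix} a & -b \\ b & a \end{pmatrix}$, whose determinant is the norm $|s|^2 = a^2 + b^2$. By the universal property of the Bohr compactification $G := b\Z[i]$, this map extends uniquely to a continuous endomorphism $\phi_s$ of $G$. The assignment $s \mapsto \phi_s$ is a ring homomorphism: the $\phi_s$ pairwise commute, the relation $s_1 + s_2 + s_3 = 0$ forces $\phi_{s_1} + \phi_{s_2} + \phi_{s_3} = 0$, and $\phi_s(G) = \overline{\iota(s\Z[i])}$ has index exactly $|s|^2$ in $G$ (closures in $G$ of finite-index subgroups under the dense embedding $\iota: \Z[i] \hookrightarrow G$ preserve the index). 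Thus the hypotheses of Theorems \ref{th:main-density} and \ref{th:main-partition} are in force on $G$.

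For part (a), a Furstenberg-style correspondence produces, from $A \subseteq \Z[i]$ with $\overline{d}(A) = \delta$, a measurable set $\tilde A \subseteq G$ of Haar measure at least $\delta$ with the property that any Bohr neighborhood of $0$ lying inside the sumset $\phi_{s_1}(\tilde A) + \phi_{s_2}(\tilde A) + \phi_{s_3}(\tilde A) \subseteq G$ pulls back under $\iota$ to a Bohr set in $\Z[i]$ that is contained in $s_1 A + s_2 A + s_3 A$. Applying Theorem \ref{th:main-density} to $G$ with the endomorphisms $\phi_{s_i}$ furnishes such a Bohr-$(k,\eta)$ set; its parameters depend only on $\delta$ and the indices $|s_i|^2$, hence only on $\delta$ and $s_1, s_2, s_3$, as required.

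Part (b) is entirely parallel: the partition $\Z[i] = \bigcup_{j=1}^r A_j$ transfers to a measurable partition $G = \bigcup_{j=1}^r \tilde A_j$, and Theorem \ref{th:main-partition} with $\phi_1 = \phi_{s_1}$ and $\phi_2 = \phi_{s_2}$ yields some $j$ and a Bohr set in $\phi_1(\tilde A_j) - \phi_1(\tilde A_j) + \phi_2(\tilde A_j) \subseteq G$, which pulls back to the desired Bohr set inside $s_1 A_j - s_1 A_j + s_2 A_j \subseteq \Z[i]$.

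Part (c) is the main obstacle. Part (b) gives, for each $s \neq 0$, some $j = j(s)$ that works, but a naive pigeonhole over the countable family $\Z[i] \setminus \{0\}$ only produces a single $j_0$ attained infinitely often, not universally. The plan is to argue by contradiction: suppose that for each $j \in \{1, \dots, r\}$ there is a witness $s_j \in \Z[i] \setminus \{0\}$ with $A_j - A_j + s_j A_j$ containing no Bohr set, and combine these $r$ obstructions into a single contradiction to Theorem \ref{th:main-partition}, perhaps by passing to a common refinement of the partition derived from the $s_j$'s, or by exploiting the ring structure on $\{\phi_s\}$ to fold the witnesses into a single endomorphism configuration. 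This uniformization step, which parallels the proof of Theorem \ref{th:main-kr}(b) in the $\Z$ case, is where the real work lies.
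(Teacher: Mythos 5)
Your outline diverges from the paper's actual route and, more importantly, has a genuine gap at the transfer step; and part (c) is explicitly left unresolved.

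The paper does \emph{not} pass to the Bohr compactification. Instead, it embeds $A \cap [-N,N]^2$ in the finite group $\Z_{N'} \times \Z_{N'}$ with $N' \approx 2(|s_1|+|s_2|+|s_3|)N$ chosen large enough that solutions found in $\Z_{N'}^2$ have no wraparound and so lift to genuine solutions in $\Z[i]$. It then runs the density or partition counting result in $\Z_{N'}^2$, extracts a Bohr-$(k,\eta)$ set there via Proposition~\ref{prop:density-bohr-stronger} (resp. \ref{prop:partition-bohr-stronger}), intersects with $[-N,N]^2$, and lets $N\to\infty$ along a subsequence using compactness of $(\R/\Z)^{2k}$ to produce a limiting Bohr set in $\Z[i]$. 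A final step, multiplying by $s_1$ and intersecting with the finite-index subgroup $s_1\Z[i]$ (which is itself a Bohr set by Lemma~\ref{lem:subgroup-bohr}), handles the division-by-$s_1$ bookkeeping in (b). The partition results rest on Proposition~\ref{prop:counting-brauer-nf}, a finitary Hales--Jewett-based count inside boxes $[-N,N]^2$, not on pushing the partition to a compact group.

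In your sketch the step that actually carries all the weight is ``any Bohr neighborhood of $0$ lying inside $\phi_{s_1}(\tilde A)+\phi_{s_2}(\tilde A)+\phi_{s_3}(\tilde A) \subseteq G$ pulls back under $\iota$ to a Bohr set in $\Z[i]$ contained in $s_1 A + s_2 A + s_3 A$.'' This is precisely the hard part of the transfer, and it is not a consequence of the Furstenberg correspondence in the form you invoke it. The correspondence produces a measure-preserving $\Z[i]$-system and a set $\tilde A$ controlling intersections of translates $A \cap (A-g_1) \cap (A-g_2)$; it does not hand you a measurable subset of $b\Z[i]$ whose sumset structure reflects that of $A$. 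The discrete copy $\iota(\Z[i])$ has Haar measure zero in $b\Z[i]$, so $\tilde A$ cannot literally be built from $A$, and relating Bohr sets in the compact sumset to Bohr sets in the discrete sumset is exactly the nontrivial content of Bj\"orklund--Griesmer~\cite{bg} and Griesmer~\cite{griesmer-br}. The paper explicitly defers this compactification-based transfer to a subsequent paper and notes that Griesmer's version is \emph{not} effective, whereas Theorem~\ref{th:nf}(a)--(b) asserts effective dependence of $(k,\eta)$ on $s_i$, $\delta$, $r$. So even if your route could be completed, it would not yield the claimed quantitative dependence.

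For part (c) you candidly say you don't have the argument. The missing idea is the diagonalization of Propositions~\ref{prop:counting-diagonal} and \ref{prop:counting-brauer-nf}(b): enumerate $\Z[i]\setminus\{0\}$ as $\ell_1, \ell_2, \ldots$, apply the Brauer-type count of Proposition~\ref{prop:counting-brauer-nf}(a) with $b=1$ and the nested coefficient lists $\{\ell_1,\ldots,\ell_k\}$ to get a color $j=f(k)$ for each $k$, pigeonhole to find one color $j$ recurring for infinitely many $k$, and then observe that the count for a single $\ell=\ell_m$ dominates the count for any Brauer configuration containing $\ell_m$ among its coefficients. This gives, for the fixed $j$, a lower bound $\gg N^4$ on the number of triples $(y,x,x+\ell y)$ in $A_j$ for every $\ell$; from there the argument of part (b) produces the Bohr set, with constants now allowed to depend on the partition and on $\ell$. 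Your proposed ``proof by contradiction with $r$ witnesses $s_j$'' does not obviously combine into a contradiction, since no single application of Theorem~\ref{th:main-partition} is constrained by $r$ different choices of $\phi_2$ at once.
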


Our next result deals with 
the ring $\F_q[t]$ of polynomials over a finite field $\Fq$.

\begin{theorem} \label{th:ff}
    Let $s_1, s_2, s_3 \in \Fq[t] \setminus \{0\}$ such that $s_1 + s_2 + s_3 = 0$. 
		
	\begin{enumerate}[label=(\alph*), leftmargin=*]
		\item If a set $A \subseteq \Fq[t]$ has positive upper density, i.e. 
    \[
        \overline{d}(A) := \limsup_{N \to \infty} \frac{|\{x \in \Fq[t]: \deg x < N \}|}{q^N} = \delta > 0,
    \]
    then $s_1 A + s_2 A + s_3 A$ contains a $\Fq$-vector subspace of finite codimension of $\F_q[t]$, where the codimension depends only on $s_1, s_2, s_3$ and $\delta$. 
    
\item If $\Fq[t] = \bigcup_{i=1}^r A_i$, then for some $i \in \{1, \ldots, r\}$, $s_1 A_i - s_1 A_i + s_2 A_i$ contains a $\Fq$-vector subspace of finite codimension of $\F_q[t]$, where the codimension depends only on $s_1, s_2$ and $\delta$. 

\item If $\F_q[t] = \bigcup_{i=1}^r A_i$, then there exists $i \in \{1, 2, \ldots, r\}$ such that $A_i - A_i + s A_i$ contains an $\Fq$-vector subspace of finite codimension of $\F_q[t]$ for any $s \in \Fq[t] \setminus \{0\}$.
\end{enumerate}
\end{theorem}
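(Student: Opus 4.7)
The plan is to reduce \cref{th:ff} to \cref{th:main-density} and \cref{th:main-partition} by passing to the finite $\Fq$-quotients $G_N := \Fq[t]/(t^N)$. Each $G_N$ is a finite $\Fq$-vector space of dimension $N$; multiplication by $s \in \Fq[t] \setminus \{0\}$ descends to a commuting $\Fq$-linear endomorphism $m_s : G_N \to G_N$ whose image has index at most $q^{\deg s}$, a bound independent of $N$. Since $G_N$ has exponent $p = \mathrm{char}\,\Fq$, every character takes values in $\mu_p$, so by choosing the Bohr radius $\eta < |e^{2\pi i/p} - 1|$ any Bohr-$(k, \eta)$ set is automatically an $\Fq$-subspace of codimension at most $k$. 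This is the key feature that turns Bohr sets in $G_N$ into finite-codimension $\Fq$-subspaces.

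For part (a), the box $B_N = \{x \in \Fq[t] : \deg x < N\}$ is a complete set of coset representatives for $G_N$, and $\overline{d}(A) = \delta$ lets us pass to a subsequence $N_j \to \infty$ along which $|\pi_{N_j}(A \cap B_{N_j})|/q^{N_j} \to \delta$. Applying \cref{th:main-density} with $\phi_i = m_{s_i}$ (commutativity is automatic, $m_{s_1}+m_{s_2}+m_{s_3} = m_0 = 0$, and the index bound is as above) yields for each $j$ an $\Fq$-subspace $V_{N_j} \subseteq G_{N_j}$ of codimension $\leq k$ contained in the $G_{N_j}$-image of $s_1 A + s_2 A + s_3 A$. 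For part (b), the partition $G_N = \bigsqcup_i \pi_N(A_i \cap B_N)$ (up to negligible overlap for large $N$) combined with \cref{th:main-partition} applied to $\phi_1 = m_{s_1}, \phi_2 = m_{s_2}$, followed by pigeonhole over the $r$ colors, yields a single color $i$ and a subsequence $(N_j)$ along which $V_{N_j} \subseteq s_1 \pi_{N_j}(A_i \cap B_{N_j}) - s_1 \pi_{N_j}(A_i \cap B_{N_j}) + s_2 \pi_{N_j}(A_i \cap B_{N_j})$.

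The transfer back to $\Fq[t]$ uses compactness of characters: each $V_{N_j}$ is the common kernel of $\leq k$ characters of $G_{N_j}$ which pull back to characters $\chi_\ell^{(N_j)}$ of $\Fq[t]$ of order $p$. Since $\widehat{\Fq[t]} \cong \prod_{i \geq 0} \Fq$ is a compact profinite group, a diagonal extraction yields a subsequence on which each $\chi_\ell^{(N_j)}$ converges pointwise to a character $\chi_\ell$ of $\Fq[t]$; the $\Fq$-subspace $V := \bigcap_\ell \ker \chi_\ell$ has codimension $\leq k$ in $\Fq[t]$. To show $V$ lies in the desired sumset, fix $v \in V$: discreteness of $\mu_p$ forces $\chi_\ell^{(N_j)}(v) = 1$ for all $\ell$ and all large $N_j$, so $v \pmod{t^{N_j}} \in V_{N_j}$ yields a modular decomposition $v \equiv s_1 a_1 + s_2 a_2 + s_3 a_3 \pmod{t^{N_j}}$ with $a_\ell \in A$. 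A degree-counting argument, obtained by running the compact theorem with the truncated box $B_{N_j - D}$ where $D = \max_\ell \deg s_\ell$, ensures $\deg a_\ell < N_j - D$, so $\deg \sum_\ell s_\ell a_\ell < N_j$ and the congruence lifts to an honest identity in $\Fq[t]$.

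Part (c) follows from (b) by a further diagonalization over the countable set $\Fq[t] \setminus \{0\}$, together with a pigeonhole over the $r$ colors. The main obstacle of the whole theorem is the promotion from ``for each $s$ some $i$ works'' (immediate from (b)) to ``some $i$ works for all $s$'': \cref{th:main-partition} selects a color $i(s, N) \in \{1, \ldots, r\}$ a priori depending on both $s$ and $N$, and one must extract, through a joint diagonal procedure over $(s, N)$, a single color $i$ such that the subspace-extraction argument of the previous paragraph applies for every $s$ (with codimension permitted to depend on $s$). Heuristically this is possible because pigeonhole forces some $A_i$ to have density $\geq 1/r$ along the subsequence and the ``good'' color in the compact setting can be arranged to be this persistent one; pinning this down rigorously is the technical heart of part (c).
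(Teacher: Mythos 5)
Part (a) of your proposal is essentially the paper's argument, up to two small repairs: (i) a Bohr set of radius $<|e(1/p)-1|$ in a finite $\Fq$-vector space is only an $\Fp$-subspace; one must pass to the $\Fq$-subspace cut out by the $\Fq$-linear forms underneath the traces, as the paper does explicitly in Section~5; and (ii) the index of $m_s(G_N)$ is governed by $\gcd(s,t^N)$, not $\deg s$ directly, though the crude bound $q^{\deg s}$ is still correct.

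Part (b), however, has a genuine gap, and it is exactly the issue the paper flags at the opening of Section~5. You cannot simultaneously feed \cref{th:main-partition} a partition of \emph{all} of $G_N$ and control wrap-around by truncating the box. If you shrink to $B_{N-D}$, the sets $\pi_N(A_i \cap B_{N-D})$ cover only a subgroup of index $q^D$, so the hypotheses of \cref{th:main-partition} are not met. If instead you use the honest partition $G_N=\bigsqcup_i\pi_N(A_i\cap B_N)$, the maps $m_{s_j}$ on $\Fq[t]/(t^N)$ are \emph{truncated} multiplications: for $a$ of degree near $N$, $m_{s_j}(a)$ is $s_j a\bmod t^N$, not $s_j a$, so the containment $V_N\subseteq m_{s_1}(A_i')-m_{s_1}(A_i')+m_{s_2}(A_i')$ in $G_N$ does not lift to an identity in $\Fq[t]$. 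No single application of \cref{th:main-partition} avoids this. The paper's route is genuinely different: it first proves a \emph{box-level} counting theorem (Proposition~\ref{prop:counting-brauer-ff}(a), via the Hales--Jewett machinery of \cref{th:mpc-hom}, with the generators $x_1,\dots,x_n$ forced into a smaller box so that the entire $S_m$-set lies in $G_N$), and only then runs the Bohr-extraction argument of \cref{prop:partition-bohr-stronger} inside the larger ambient group $G_{N'}$ with $N'=N+\max(\deg s_1,\deg s_2)$, where the witnesses (of degree $<N$) keep all products below degree $N'$ and the congruences lift. Part (c) inherits this gap, and your heuristic about a ``persistent dense color'' is off track -- the selected color need not be dense; the paper's mechanism is the containment trick of Proposition~\ref{prop:counting-brauer-ff}(b) (parallel to \cref{prop:counting-diagonal}): count long Brauer configurations, pigeonhole on the configuration length to pin down one color $i$, and note that the $\{y,x,x+sy\}$-count is dominated by the count for any longer configuration listing $s$ among its coefficients.
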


We remark that the special case $s_1, s_2, s_3 \in \Fq \setminus\{0\}$ of  \cref{th:ff}(a) is essentially Corollary 1.4 in \cite{griesmer-br}.

\subsection{Counting linear patterns}
Similarly to the proofs of Bogolyubov \cite{bogo} and Bergelson-Ruzsa \cite{br}'s theorems, we deduce Theorem \ref{th:main-density} from a lower bound (of correct order of magnitude) for the number of certain linear patterns in $G$. This is straightforward in Bogolyubov's case, but less so in Bergelson and Ruzsa's. Bergelson and Ruzsa had to count the number of generalized Roth patterns $\{ x, x+ry, x+sy \}$ (where $r, s \in \Z$) and they deduced this from Szemer\'edi's theorem \cite{szemeredi} and Varnavides' argument \cite{var}. For us, we need to count the number of patterns $\{ x, x+ \phi(y), x+ \psi(y) \}$ (where $\phi$ and $\psi$ are homomorphisms). This is accomplished by generalizing a Fourier-analytic argument of Bourgain \cite{bourgain-roth}. Bourgain's argument, in essence an arithmetic regularity lemma, allows us to obtain the following Khintchine-type result.
 
\begin{theorem}[Khintchine-Roth theorem in compact abelian groups]
\label{th:roth-khintchine}
Let $G$ be a compact abelian group with  probability Haar measure $\mu$ and $\phi, \psi: G \to G$ be continuous homomorphisms such that $[G: \phi(G)], [G: \psi(G)]$ and $[G: (\phi-\psi)(G)]$ are finite. 
Let $f: G \to [0, 1]$ be a measurable function with $\int_G f \, d \mu = \delta >0$.
 
Then for any $\epsilon > 0$, there exists a constant $c_1 >0$ that depends only on $\delta, \epsilon$ and the indexes above such that the set
\[
    B = \left\{y \in G: \int_G f(x) f(x+ \phi(y)) f(x + \psi(y)) \, d \mu(x) > \delta^3 - \epsilon \right\}
\]
has measure at least $c_1$. Consequently,
\begin{equation} \label{eq:counting-roth}
\iint_{G^2} f(x) f(x+ \phi(y)) f(x + \psi(y)) \, d \mu(x) d\mu(y) \geq c_2 
\end{equation}
for some positive constant $c_2$ depending only on $\delta$ and the indexes above.
%As a consequence, there exists a constant $c_2$ that depends only on $\delta$ and indices of $\phi(G), \psi(G)$ such that
%\[
%    \iint_{G^2} f(x) f(x+ \phi(t)) f(x+ \psi(t)), d\mu(x) d\mu(t) > c_2.
%\]
\end{theorem}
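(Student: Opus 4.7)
My plan is to mimic Bourgain's Fourier-analytic argument for Roth's theorem, adapted to compact abelian groups and to the homomorphisms $\phi,\psi$. Let $\hat G$ be the Pontryagin dual of $G$ and $\hat f(\gamma):=\int f\bar\gamma\,d\mu$. Fix small parameters $\eta,\eta'>0$ to be chosen at the end. Put $\Lambda:=\{\gamma\in\hat G:|\hat f(\gamma)|>\eta\}$, so that $|\Lambda|\le 1/\eta^2$ by Parseval. Let $K$ be a nonnegative convolution kernel (for example the normalized indicator of a small Bohr set) whose Fourier transform approximates $\mathbf{1}_\Lambda$, and set $f_s:=f*K$, $f_u:=f-f_s$. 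This decomposition has the key properties $0\le f_s\le 1$, $\int f_s\,d\mu=\delta$, $\hat f_s$ essentially supported on $\Lambda$, and $\|\hat f_u\|_\infty\le \eta$.

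The main technical step is a generalized von Neumann inequality
\[
\left|\iint g_1(x)\,g_2(x+\phi(y))\,g_3(x+\psi(y))\,d\mu(x)\,d\mu(y)\right| \le C\,\min_i\|\hat g_i\|_\infty^{c}
\]
for $\|g_i\|_\infty\le 1$, with $C,c$ depending only on $[G:\phi(G)]$, $[G:\psi(G)]$, and $[G:(\phi-\psi)(G)]$. I would prove this by two applications of Cauchy--Schwarz combined with the translations $y\mapsto y+h$ and $x\mapsto x+\phi(h)$; the three finite-index hypotheses are used precisely because the pushforwards $\phi_*\mu$, $\psi_*\mu$, $(\phi-\psi)_*\mu$ are then bounded multiples of the Haar measure restricted to the corresponding image subgroup, so that the change of variables produced by each Cauchy--Schwarz step does not lose too much. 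Expanding the trilinear form $T(y):=\int f(x)f(x+\phi(y))f(x+\psi(y))\,d\mu(x)$ via $f=f_s+f_u$, each of the seven cross terms contains an $f_u$, and applying the inequality term by term yields $\int_G|T(y)-T_s(y)|^2\,d\mu(y)\le C'\eta^{c'}$, where $T_s$ is the same trilinear form with $f_s$ in place of $f$.

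For the structured part, consider the Bohr set $B_0:=B(\Lambda';\eta')$ with $\Lambda':=\{\gamma\circ\phi:\gamma\in\Lambda\}\cup\{\gamma\circ\psi:\gamma\in\Lambda\}$; the standard pigeonhole lower bound gives $\mu(B_0)\gtrsim (\eta')^{2/\eta^2}$. For $y\in B_0$ the characters $\gamma\circ\phi,\gamma\circ\psi$ are $\eta'$-close to $1$ uniformly in $\gamma\in\Lambda$, so inserting the Fourier expansion of $f_s$ yields $T_s(y)\ge \int f_s^3\,d\mu-O(\eta')$; H\"older's inequality applied to $0\le f_s\le 1$ with $\int f_s=\delta$ gives $\int f_s^3\,d\mu\ge \delta^3$. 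Choose $\eta'$ small so that the $O(\eta')$ error is under $\epsilon/2$, then $\eta$ small so that $C'\eta^{c'}\le \mu(B_0)(\epsilon/2)^2/2$; Chebyshev applied to the $L^2$ bound above then shows that $E:=\{y:|T(y)-T_s(y)|>\epsilon/2\}$ has $\mu$-measure at most $\mu(B_0)/2$. Therefore $B\supseteq B_0\setminus E$ has measure at least $c_1:=\mu(B_0)/2>0$ depending only on $\delta,\epsilon$ and the three indices, and the displayed double-integral bound follows by restriction. The delicate step will be the generalized von Neumann inequality: in the classical Roth case on $\Z$ one exploits that $n\mapsto 2n$ is essentially bijective, while here the non-injectivity of $\phi$, $\psi$, and especially $\phi-\psi$ must be absorbed through their finite-index image conditions, and it is in this step that all three finite-index hypotheses appear in an essential way.
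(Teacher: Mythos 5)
Your plan has the right ingredients in spirit (Fourier decomposition $f=f_s+f_u$ with $f_s=f*K$, a generalized von Neumann inequality, a Bohr-set structure for the main term), and your ``key technical step'' is essentially the paper's Lemma \ref{lem:bourgain_lemma_2}. But there are two genuine gaps, and the second one is fatal to the proposal as written.

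First, the assertion that applying the trilinear von Neumann inequality ``term by term'' yields $\int_G |T(y)-T_s(y)|^2\,d\mu(y)\le C'\eta^{c'}$ is not justified. The trilinear bound controls quantities of the form $\bigl|\iint g_1(x)g_2(x+\phi(y))g_3(x+\psi(y))\,d\mu(x)\,d\mu(y)\bigr|$, i.e.\ a bound on $\int T(y)\,d\mu(y)$. To bound $\|T-T_s\|_{L^2}^2$ one must expand the square, substitute $x'=x+u$, and apply the trilinear bound to the functions $h^u_j(z)=g_j(z)\overline{g_j(z+u)}$. But $\|\widehat{h^u_j}\|_\infty$ is \emph{not} small even when $g_j=f_u$: one has $\int_G |\widehat{h^u_j}(\gamma)|^2\,d\mu(u)\le \|\widehat{f_u}\|_\infty^2\|f_u\|_2^2$ for each fixed $\gamma$, but the supremum over $\gamma$ ruins this bound, and the naive estimate gives only $\int_G\|\widehat{h^u_j}\|_\infty^2\,d\mu(u)\le\|f_u\|_2^4=O(1)$. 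Some further idea is needed here, which your write-up does not supply.

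Second — and independently of the first issue — the final parameter choice cannot close. You take $\Lambda=\{\gamma:|\widehat f(\gamma)|>\eta\}$, so $|\Lambda|\le\eta^{-2}$, and $B_0=B(\Lambda';\eta')$ with $|\Lambda'|\le 2\eta^{-2}$, giving $\mu(B_0)\gtrsim(\eta')^{2/\eta^2}$. You need $C'\eta^{c'}\le\mu(B_0)(\epsilon/2)^2/2$, that is $\eta^{c'}\lesssim\epsilon^2(\eta')^{2/\eta^2}$. But once $\eta'$ has been fixed small enough to kill the $O(\eta')$ error (so $\eta'<1$), the right-hand side decays doubly exponentially as $\eta\downarrow 0$, while the left-hand side decays only polynomially; hence there is no admissible $\eta$ at all. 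This is precisely the obstruction that the paper's Regularity Lemma (Proposition \ref{prop:regularity_lemma}) is built to overcome. The paper does not use a two-piece decomposition $f=f_s+f_u$: it iterates a sequence of kernels $K_n$ and uses an $L^2$ energy increment to locate a scale $n$ with $\|f_{n+1}-f_n\|_2<\epsilon$, producing a three-piece decomposition $f=f_{st}+f_{er}+f_{un}$ in which $f_{er}$ is controlled in $L^2$, and $f_{un}$ satisfies $\|\widehat{f_{un}}\|_\infty\|\widehat K\|_1<\epsilon$ against a comparison kernel $K$ whose scale is \emph{coupled} to $n$. The final step is a Markov-type estimate against the kernel $K$ (comparing with $\|K\|_\infty$), not a Chebyshev estimate against $\mu(B_0)$; this sidesteps the doubly-exponential-vs-polynomial mismatch that breaks your proposal.
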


Theorem \ref{th:roth-khintchine} was proved independently by Berger-Sah-Sawhney-Tidor \cite{bsst}, under the hypothesis that $\phi, \psi$  and $\phi-\psi$ are automorphisms, using a very similar argument. 
Our execution is slightly different from theirs, in that we follow Bergelson-Host-McCutcheon-Parreau \cite{bhmp}'s elaboration of Bourgain's argument, while they follow Tao \cite{tao}'s.

Theorem \ref{th:roth-khintchine} is markedly similar to the following result of Ackelsberg, Bergelson and Best: %(\textcolor{red}{without quantitative bound on how big the syndetic set is}):

\begin{theorem}[{\cite[Theorem 1.10]{abb}}] \label{th:abb} Let $G$ be a countable discrete abelian group, and $\phi, \psi: G \to G$ be homomorphisms such that $[G: \phi(G)], [G: \psi(G)]$ and $[G: (\phi-\psi)(G)]$ are finite. For any ergodic system $(X,\mathcal{B}, \mu, (T_g)_{g \in G})$, any $\epsilon > 0$, and any $A \in \mathcal{B}$, the set
\[
B = \left\{ g \in G : \mu ( A \cap T^{-1}_{\phi(g)} A \cap T^{-1}_{\psi(g)} A ) > \mu(A)^3 - \epsilon \right\}
\] is syndetic in $G$. 
\end{theorem}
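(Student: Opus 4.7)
The plan is a Fourier-analytic regularity argument in the spirit of Bourgain's proof of Roth's theorem~\cite{bourgain-roth}, as elaborated by Bergelson--Host--McCutcheon--Parreau~\cite{bhmp}, adapted to handle the endomorphisms $\phi,\psi$. Fix small parameters $\eta,\rho > 0$ to be chosen at the end in terms of $\delta,\epsilon$ and the three indices, and set $\Lambda := \{\chi \in \hat G : |\hat f(\chi)| \geq \eta\}$. Since $\|f\|_2^2 \leq \|f\|_\infty \int f\,d\mu \leq \delta$, Parseval's identity forces $|\Lambda| \leq \delta/\eta^2$.

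The candidate set will be a subset of the Bohr set in $G$ obtained by pulling $\Lambda$ back through $\phi$ and $\psi$:
\[
U := \bigl\{y \in G : |\chi(\phi(y))-1| < \rho \text{ and } |\chi(\psi(y))-1| < \rho \text{ for all } \chi \in \Lambda\bigr\}.
\]
The finite-index hypothesis on $\phi(G)$ and $\psi(G)$ guarantees (via Pontryagin duality) that the dual maps $\chi \mapsto \chi \circ \phi$ and $\chi \mapsto \chi \circ \psi$ on $\hat G$ have finite kernels, so $U$ is genuinely a Bohr set of rank at most $2|\Lambda|$, with $\mu(U) \geq c(|\Lambda|,\rho,[G\!:\!\phi(G)],[G\!:\!\psi(G)]) > 0$, a lower bound depending only on the stated parameters.

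Expanding $I(y) := \int f(x)f(x+\phi(y))f(x+\psi(y))\,d\mu(x)$ in characters gives
\[
I(y) = \sum_{\chi_1+\chi_2+\chi_3 = 0} \hat f(\chi_1)\,\hat f(\chi_2)\,\hat f(\chi_3)\,\chi_2(\phi(y))\,\chi_3(\psi(y)).
\]
I would decompose $f = f_{\mathrm{str}} + f_{\mathrm{unf}}$, where $f_{\mathrm{str}}$ is a nonnegative regularization of $f$ obtained by convolving with the normalized indicator of a small Bohr set $W = B(\Lambda;\rho)$. This preserves $\int f_{\mathrm{str}}\,d\mu = \delta$ and $\|f_{\mathrm{str}}\|_\infty \leq 1$, while forcing $\hat{f_{\mathrm{str}}}(\chi) = (1+O(\rho))\hat f(\chi)$ on $\Lambda$ and $|\hat{f_{\mathrm{unf}}}(\chi)| \lesssim \eta$ for $\chi \notin \Lambda$. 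For $y \in U$, the phases $\chi_2(\phi(y))\chi_3(\psi(y))$ with $\chi_2,\chi_3 \in \Lambda$ are within $O(\rho)$ of $1$, so the purely-$f_{\mathrm{str}}$ contribution to $I(y)$ is within $O(\rho|\Lambda|^2\delta)$ of $\int f_{\mathrm{str}}^3\,d\mu$, which by Jensen's inequality (on the probability space $(G,\mu)$ with $f_{\mathrm{str}} \geq 0$ and mean $\delta$) is at least $\delta^3$.

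The main technical obstacle is to control the seven remaining error terms, each containing at least one factor of $f_{\mathrm{unf}}$. I plan to bound these in $L^2(y)$ via a generalized von Neumann inequality: squaring out and integrating in $y$ reduces matters to $\|f_{\mathrm{unf}}\|_{U^2}$, which by Parseval and the Fourier $L^\infty$-bound is at most $O((\eta^2\delta)^{1/4})$. The third finite-index hypothesis $[G\!:\!(\phi-\psi)(G)] < \infty$ enters precisely here: squaring out the $y$-integral produces a diagonal character constraint of the form $(\chi_2-\chi_2')\circ\phi + (\chi_3-\chi_3')\circ\psi = 0$, and the finite-index condition guarantees it has only finitely many solutions, which is what allows the $U^2$-bound to close. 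Chebyshev's inequality then yields a subset $B \subseteq U$ with $\mu(B) \geq \mu(U)/2$ on which $I(y) \geq \delta^3 - \epsilon$, giving the first claim with $c_1 := \mu(U)/2$. Integrating over $B$ and using $I \geq 0$ elsewhere gives~\eqref{eq:counting-roth} with $c_2 \geq c_1(\delta^3-\epsilon)$; applying the first claim with $\epsilon = \delta^3/2$ removes the $\epsilon$-dependence and yields a $c_2$ depending only on $\delta$ and the indices.
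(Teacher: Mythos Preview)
You are addressing the wrong statement. Theorem~\ref{th:abb} is an ergodic-theoretic result about a \emph{countable discrete} group $G$ acting on a probability space $(X,\mathcal B,\mu)$; the paper merely quotes it from~\cite{abb} for comparison and does not prove it. Your argument instead treats $G$ as a compact group with Haar measure, works with a function $f:G\to[0,1]$ of integral $\delta$, and refers to the constants $c_1,c_2$ and to~\eqref{eq:counting-roth} --- all of which belong to Theorem~\ref{th:roth-khintchine}, not Theorem~\ref{th:abb}.

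Read as an attempt at Theorem~\ref{th:roth-khintchine}, the single-step plan still has a real gap: the parameters are circular. Your error terms are controlled in $L^2(d\mu(y))$ over all of $G$ by something of size $O(\eta^{1/2})$, so Chebyshev gives that the ``bad'' set of $y$ has measure $O(\eta/\epsilon^2)$. You need this to be smaller than $\mu(U)/2$, but $\mu(U)$ is at best $(C_0\rho)^{2|\Lambda|}$ with $|\Lambda|\le\delta/\eta^2$; shrinking $\eta$ makes $\mu(U)$ decay like $\exp(-c/\eta^2)$, and the inequality $\eta/\epsilon^2<\mu(U)/2$ never closes. This is exactly why the paper's proof is iterative: Proposition~\ref{prop:regularity_lemma} runs an energy-increment, building $f_n=f*K_n$ and using pigeonhole on $\|f_{n+1}-f_n\|_2^2$ to locate a stable scale $n$, and only \emph{then} chooses the final kernel $K$, decoupling $\|\widehat{f_{un}}\|_\infty$ from $\|\widehat K\|_1$ so that their product is $<\epsilon$ while $\|K\|_\infty$ stays bounded. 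A secondary issue: your claim that the purely-$f_{\mathrm{str}}$ contribution is within $O(\rho|\Lambda|^2\delta)$ of $\int f_{\mathrm{str}}^3$ for $y\in U$ presumes $f_{\mathrm{str}}$ is Fourier-supported on $\Lambda$, but convolving with $1_W/\mu(W)$ does not truncate the spectrum to $\Lambda$; making this step rigorous requires either Bohr-set regularity or the smooth kernels of Lemma~\ref{lem:kernel_a}, neither of which you have arranged.
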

% However, this result of Ackelsberg, Bergelson and Best is not quantitative in the sense that the ``largeness'' of the set $B$ depends on $A$. Due to this limitation, we do not know how to use \cref{th:abb} to obtain results concerning Bohr sets. 

As discussed in \cite[Section 10]{abb}, the finite index condition in Theorem \ref{th:abb} is necessary. The following result of Fox-Sah-Sawhney-Stoner-Zhao \cite{fsssz}, improving on an earlier result of Mandache \cite{mandache}, shows that the finite index condition is also necessary in Theorem \ref{th:roth-khintchine}. 

\begin{example} \label{ex:roth-khintchine}
Let $\ell <4$ be arbitrary and $\delta >0$ be sufficiently small in terms of $l$. Let $G = \F_2^n \times \F_2^n$ where $n$ is sufficiently large, $\phi(u,v)=(u,0), \psi(u,v)=(0,u)$. Then the left hand side of \eqref{eq:counting-roth} counts the number of ``corners'' $\{ (a, b) , (a+u, b), (a, b+u) \}$ in $\F_2^n \times \F_2^n$. \cite[Corollary 1.3]{fsssz} states that there exists a set $A \subset G$ of size $\geq \delta |G|$ such that for any $u \in \F_2^n \setminus \{0\}$, we have
\[
\# \{ (a, b) \in G: (a, b) , (a+u, b), (a, b+u) \in A \} < \delta^\ell |G|.
\]
Hence, the set $B$ in Theorem \ref{th:roth-khintchine} has to be $\{ 0 \} \times \F_2^n$. But the measure of this set in $G$ goes to $0$ as $n$ goes to infinity.
\end{example}

Regarding Theorem \ref{th:main-partition}, we deduce it from the following result, which counts the number of monochromatic configurations under finite partitions of $G$.

%\textcolor{red}{Do we want to change the parameter $t$ to $y$ in the following theorem?}
\begin{theorem} \label{th:counting-partition}
Let $G$ be a compact abelian group with  probability Haar measure $\mu$ and let $\psi, \phi_1, \ldots, \phi_k: G \to G$ be continuous homomorphisms satisfying:
\begin{enumerate}[label=(\alph*), leftmargin=*]
\item $\psi, \phi_1, \ldots, \phi_k$ are commuting, and
\item $\psi(G), \phi_1(G), \ldots, \phi_k(G)$ have finite index in $G$.
\end{enumerate}
Suppose $G = \bigcup_{i=1}^r A_i$ is a partition of $G$ into measurable sets. Then 
\begin{equation} \label{eq:counting-brauer}
\sum_{i=1}^r \iint_{G^2} 1_{A_i}(\psi(y)) 1_{A_i}(x) 1_{A_i}(x + \phi_1(y)) \cdots 1_{A_i}(x + \phi_k(y)) \, d \mu(x) d\mu(y) \geq c_3
\end{equation} 
for some positive constant $c_3$ depending only on $r, k$ and the indexes above.
\end{theorem}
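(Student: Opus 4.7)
The plan is to derive \cref{th:counting-partition} by combining a pigeonhole over the colors with a multilinear extension of \cref{th:roth-khintchine}. Observe that for fixed $(x,y)$ the integrand in \eqref{eq:counting-brauer} is the indicator that $\{\psi(y),x,x+\phi_1(y),\ldots,x+\phi_k(y)\}$ is monochromatic, so the left-hand side simply measures the set of monochromatic configurations. As a warm-up, the case $k=0$ already follows from Cauchy--Schwarz: writing $d=[G:\psi(G)]$, we have $\sum_i \mu(A_i\cap\psi(G))=1/d$, so
\[
\sum_i \mu(A_i)\,\mu(A_i\cap\psi(G)) \;\ge\; \sum_i \mu(A_i\cap\psi(G))^2 \;\ge\; \tfrac{1}{rd^2},
\]
and multiplying by $d$ (from the change of variables $\psi$) yields $c_3\ge 1/(rd)$.

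For general $k\ge 1$ I would first use the same pigeonhole to fix a distinguished color $i_0$ with $\delta_0:=\mu(A_{i_0})\ge 1/(rd)$. Next, to $f=1_{A_{i_0}}$ I apply a Khintchine-type lower bound for the $(k+1)$-term configuration $(x,x+\phi_1(y),\ldots,x+\phi_k(y))$: the goal is a set $B\subseteq G$ with $\mu(B)\ge c_1>0$ on which
\[
\int_G 1_{A_{i_0}}(x)\prod_{j=1}^{k}1_{A_{i_0}}(x+\phi_j(y))\,d\mu(x)\ \ge\ \tfrac{1}{2}\delta_0^{\,k+1}.
\]
Because the $\phi_j$'s commute, all differences $\phi_i-\phi_j$ still have finite-index image, so the Bourgain/BHMP arithmetic regularity argument underlying \cref{th:roth-khintchine} should extend (multilinearly, or by induction on $k$) to produce such a $B$. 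The final step is to re-introduce the constraint $\psi(y)\in A_{i_0}$: while $B$ has positive measure, it could a priori be nearly disjoint from $\psi^{-1}(A_{i_0})$. The resolution is that $B$, being the output of the regularity lemma, contains a Bohr neighborhood of $0$ of bounded rank and radius coming from the structured part of the decomposition; by pulling this Bohr structure through the commuting homomorphism $\psi$ and averaging over the finitely many cosets of $\psi(G)$, one locates a translate along which $\psi^{-1}(A_{i_0})$ meets $B$ on a set of positive measure. Multiplying the three lower bounds gives the required constant $c_3$ depending only on $r$, $k$, and the indices.

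The hard part will be the second step, namely extending \cref{th:roth-khintchine} from three-term patterns to the longer configuration above. In $\Z$, naive Khintchine statements for arithmetic progressions of length $\ge 4$ fail due to higher-order uniformity obstructions of Tao--Ziegler type, but the commutativity and finite-index hypotheses here should rule out such obstructions, mirroring the ergodic phenomenon exploited by Ackelsberg--Bergelson--Best (\cref{th:abb}). If one prefers to avoid proving a bespoke multilinear Khintchine theorem, an alternative is an induction on $k$ with a color-focusing pigeonhole at each stage, invoking only the three-term case (\cref{th:roth-khintchine}) together with the elementary $k=0$ bound above.
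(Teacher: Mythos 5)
Your proposal departs fundamentally from the paper's approach, and unfortunately both of its key steps run into serious obstructions.

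\textbf{The Khintchine extension step fails.} You propose to lower-bound $\int 1_{A_{i_0}}(x)\prod_{j=1}^{k}1_{A_{i_0}}(x+\phi_j(y))\,d\mu(x)$ by roughly $\delta_0^{k+1}$ for $y$ in a positive-measure set $B$, by extending the Bourgain/BHMP regularity argument. You rightly flag that the analogous Khintchine statement fails for $4$-term arithmetic progressions in $\Z$, but you then claim the commutativity and finite-index hypotheses rule out those obstructions. They do not. Take $G=\Z_N$, $\phi_j=j\cdot\mathrm{id}$ for $j=1,2,3$: all maps commute and are automorphisms (index $1$), yet the configuration is a $4$-term AP, and the Bergelson--Host--Kra/Ruzsa counterexample shows that $\mu(A\cap T^{-n}A\cap T^{-2n}A\cap T^{-3n}A)$ can be far below $\mu(A)^4$ for all but a sparse set of $n$. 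More basically, the Fourier-analytic regularity argument behind \cref{th:roth-khintchine} and \cref{lem:bourgain_lemma_2} is intrinsically a ``rank $1$'' tool: it controls $3$-point configurations because two of the three frequency constraints determine the third; for $4$ or more points one needs higher-order ($U^s$-type) information. So the multilinear extension you rely on simply does not exist in this generality, and neither does the ``induction on $k$ invoking only the three-term case'' fallback, for the same reason Brauer's theorem is not a pigeonhole consequence of Schur's.

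\textbf{The reconciliation with $\psi(y)\in A_{i_0}$ is unjustified.} Even in the Fourier-tractable range, your final step asks that the positive-measure set $B$ of ``good'' $y$'s (or some translate of it) meet $\psi^{-1}(A_{i_0})$ in positive measure. There is no mechanism forcing this: $i_0$ was chosen only because $\mu(A_{i_0})$ is large, which says nothing about how $A_{i_0}$ (an arbitrary measurable set) sits relative to $B$. And translating $B$ does not help, because the Khintchine lower bound you produced is a statement about the untranslated $B$; it does not survive replacing $B$ by $B+w$. Concretely, with $k=1$, $\psi=\phi_1=\mathrm{id}$ and $G=\Z_N$, the task is precisely Schur's theorem in density form for a single color, and the set $B$ produced by regularity could well be disjoint from $A_{i_0}$.

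\textbf{What the paper does instead.} The paper's proof of \cref{th:counting-partition} is Ramsey-theoretic and entirely Fourier-free. It first proves a purely combinatorial monochromatic configuration theorem (\cref{th:mpc-hom}) for formal $S_m$-sets, using the multidimensional Hales--Jewett theorem; crucially, the element $\psi(y)$ is produced as part of the monochromatic configuration itself, so the $\psi(y)\in A_i$ constraint is built in rather than reconciled afterwards. The averaged counting bound \eqref{eq:counting-brauer} is then extracted by letting the formal generators $x_1,\ldots,x_n$ range over $G$ and applying \cref{lem:finite-index} (this is where the finite-index hypothesis is used) to compare each monochromatic count to the target integral $T_i$. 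This is exactly the kind of partition-versus-density dichotomy the paper emphasizes: the partition result goes beyond what density (hence Khintchine/regularity) methods can deliver, which is why a Hales--Jewett argument, and not an arithmetic regularity lemma, is the right tool here.
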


\begin{remark}\
\begin{itemize}[leftmargin=*]
%\item In our application (i.e. Theorem \ref{th:main-partition}), we need only the case $k=1$, but Theorem \ref{th:counting-partition} is of independent interest. 
\item By taking $\psi =0$, we see that the condition $[G:\psi(G)]$ is finite cannot be removed. However, we do not know whether the condition $[G: \phi_i(G)] < \infty$ is necessary or not. 
\item Our proof relies heavily on the commuting condition and we do not know if it can be removed.
\end{itemize}
\end{remark}

When $\psi$ and $\phi$ are dilations, the configuration $\{ \psi(y), x, x + \phi_1(y), \ldots, x + \phi_k(y) \}$ becomes the \textit{Brauer configuration} $\{y, x, x+y, \ldots, x + ky\}$. Results on counting such monochromatic configurations have been established by Serra-Vena \cite[Theorem 1.3]{sv} for finite abelian groups of bounded torsion. Thus, besides the fact that it allows for more general homomorphisms, Theorem \ref{th:counting-partition} has the advantage of being uniform over all groups. On the other hand, our finite index condition is certainly related, and in a sense, dual to Serra-Vena's bounded exponent condition \cite{sv}.

We remark that despite the apparent similarity between \eqref{eq:counting-roth} and \eqref{eq:counting-brauer}, their proofs are very different. The proof of \cref{th:counting-partition} is ``Fourier-free'' %(\Anote{does not use Fourier analysis}) 
and its main ingredient is the Hales-Jewett theorem. Thus, our approach in proving this theorem is also genuinely different from Serra-Vena's, which relies on a removal lemma for groups. 

On the quantitative side, our bounds leave much to be desired. Since the proof of \cref{th:roth-khintchine} relies on the regularity lemma (\cref{prop:regularity_lemma}), in \cref{th:main-density}, the dependence of $k$ and $\eta$ on $\delta$ and $[G:\phi_i(G)]$ is of tower type. Likewise, since the proof of \cref{th:counting-partition} uses the Hales-Jewett theorem, the bounds for $k$ and $\eta$ in \cref{th:main-partition} are even worse. It is an interesting problem to obtain good bounds for Theorems \ref{th:main-density} and \ref{th:main-partition}, even in special classes of groups such as $\F_p^n$. Indeed, Sanders \cite[Theorem A.1]{sanders} obtained a near optimal bound for Bogolyubov's theorem in $\F_p^n$.

\textbf{Outline of the paper.}
In \cref{sec:prelim}, we set up notation and collect some basic facts about Bohr sets, kernels and homomorphisms in compact abelian groups. \cref{sec:partition} is devoted to proving results involving partitions, especially, Theorems \ref{th:main-partition} and \ref{th:counting-partition}. Theorems \ref{th:main-density}, \ref{th:roth-khintchine} and related density results will be proved in \cref{sec:density}. \cref{sec:z_and_field} contains proofs of results in $\Z$, number fields and function fields, i.e. Theorems \ref{th:main-kr}, \ref{th:nf} and \ref{th:ff}. Lastly, we present some related open questions in \cref{sec:open_question}.

\section{Preliminaries}
\label{sec:prelim}
In this section, we gather some background on Bohr sets, kernels and homomorphisms in compact abelian groups. Most of the results are well-known or resemble known theorems. We include proofs for the results that we cannot pinpoint precisely in the literature. 

% We prove the results whose proofs we cannot find in literature.

\subsection{Notation} We write $[N]$ for the set $\{ 1, \ldots, N\}$. If $A$ and $B$ are two quantities, we write $A = O(B)$ or $A \ll B$ if there is a constant $C$ such that $|A| \leq CB$. We write $e(x)$ for $e^{2 \pi i x}$.

Throughout this paper, $G$ is a Hausdorff compact abelian group with probability Haar measure $\mu$ and  $\Gamma$ is the dual of $G$, written additively. The relevance of homomorphisms is that if $\gamma \in \Gamma$ and $\phi:G \rightarrow G$ is a continuous homomorphism, then $\gamma \circ \phi$ is also an element of $\Gamma$.

If $f: G \rightarrow \C$ is a function, for $t \in G$ we define the function $f_t(x) = f(x+t)$. 
For $f \in L^1(G)$, the Fourier transform of $f$ is the function
\[
\widehat{f} (\gamma) = \int_{G} f(x) \overline{\gamma (x)} \, d\mu(x) \qquad \textup{ for } \gamma \in \Gamma.
\]
For $f, g \in L^2(G)$, we then have Parseval's formula
\[
\int_{G} f(x) \overline{g(x)} \, d\mu(x) = \sum_{\gamma \in \Gamma} \widehat{f} (\gamma) \overline{ \widehat{g} (\gamma) }
\]
and Parseval's formula
\[
\int_{G} |f(x)|^2 \, d\mu(x) = \sum_{\gamma \in \Gamma} \left| \widehat{f} (\gamma) \right|^2.
\]

\subsection{Bohr sets} 
% If $\Lambda \subset \Gamma$ is a finite set and $\eta >0$, then the set
% \[
% B(\Lambda; \eta) = \{ x \in G: |\gamma(x) -1 | < \eta \textup{ for all } \gamma \in \Lambda \}
% \]
% is called a Bohr-$(|\Lambda|, \eta)$ set.
For $\Lambda, \Lambda_1, \Lambda_2 \subseteq \Gamma$ and $\eta_1, \eta_2 > 0$, it follows from the definition of Bohr sets that 
\[
    B(\Lambda_1; \eta_1) \cap B(\Lambda_2; \eta_2) \supset B(\Lambda_1 \cup \Lambda_2; \min(\eta_1, \eta_2))
\]
and
\[
    B(\Lambda; \eta_1) + B(\Lambda; \eta_2) \subset B(\Lambda; \eta_1 + \eta_2).
\]

\begin{lemma} \label{lem:translate}
Suppose $f_1, \ldots, f_k \in L^\infty(G)$, $\| f_i \|_{\infty} \leq 1$ for all $i=1, \ldots, k$. Let $\phi_1, \ldots, \phi_k$ be continuous homomorphisms $G \rightarrow G$. Then for any $\eta >0$, the set
\[
B = \{ t \in G: \| \widehat{f_i} - \widehat{f_{i, \phi_i(t)}} \|_{\infty} < \eta \textup{ for }i=1, \ldots, k \}
\]
contains a Bohr set $B(\Lambda; \eta)$ where $|\Lambda| \leq \frac{4k}{\eta^2}$.
\end{lemma}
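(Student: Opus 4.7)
The key identity is that translation becomes modulation on the Fourier side: for any $s \in G$ and $\gamma \in \Gamma$,
\[
\widehat{f_{i,s}}(\gamma) = \int_G f_i(x+s) \overline{\gamma(x)} \, d\mu(x) = \gamma(s)\,\widehat{f_i}(\gamma),
\]
so that $\widehat{f_i}(\gamma) - \widehat{f_{i,\phi_i(t)}}(\gamma) = \widehat{f_i}(\gamma)\bigl(1-(\gamma\circ\phi_i)(t)\bigr)$. Thus controlling $\|\widehat{f_i} - \widehat{f_{i,\phi_i(t)}}\|_\infty$ amounts to ensuring that $(\gamma\circ\phi_i)(t)$ is close to $1$ precisely at those characters $\gamma$ where $|\widehat{f_i}(\gamma)|$ is not small. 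Note also that $\gamma\circ\phi_i \in \Gamma$ (as recorded in the Notation subsection), so we may legitimately use such characters to define a Bohr set.

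The plan is to perform a standard large/small Fourier coefficient split. For each $i$, define the spectrum
\[
\Lambda_i := \{\gamma \in \Gamma : |\widehat{f_i}(\gamma)| \geq \eta/2\}.
\]
Since $\|f_i\|_\infty \leq 1$ and $\mu(G)=1$ give $\|f_i\|_2 \leq 1$, Plancherel's formula yields
\[
1 \geq \sum_{\gamma \in \Gamma} |\widehat{f_i}(\gamma)|^2 \geq |\Lambda_i|\cdot (\eta/2)^2,
\]
hence $|\Lambda_i| \leq 4/\eta^2$. Now set
\[
\Lambda := \bigcup_{i=1}^k \{\gamma \circ \phi_i : \gamma \in \Lambda_i\} \subseteq \Gamma, \qquad |\Lambda| \leq 4k/\eta^2.
\]

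It remains to check that $B(\Lambda;\eta) \subseteq B$. Fix $t \in B(\Lambda;\eta)$, any $i \in \{1,\ldots,k\}$, and any $\gamma \in \Gamma$. If $\gamma \in \Lambda_i$, then $\gamma \circ \phi_i \in \Lambda$, so $|(\gamma\circ\phi_i)(t)-1| < \eta$; combined with $|\widehat{f_i}(\gamma)| \leq \|f_i\|_1 \leq 1$ this gives $|\widehat{f_i}(\gamma)(1-(\gamma\circ\phi_i)(t))| < \eta$. If instead $\gamma \notin \Lambda_i$, then $|\widehat{f_i}(\gamma)| < \eta/2$, and using the trivial bound $|1-(\gamma\circ\phi_i)(t)| \leq 2$ (since $|(\gamma\circ\phi_i)(t)|=1$) we again obtain $|\widehat{f_i}(\gamma)(1-(\gamma\circ\phi_i)(t))| < \eta$. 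Taking the supremum over $\gamma$ and the maximum over $i$ shows $t \in B$, as required.

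There is no genuine obstacle here; this is a routine Fourier-analytic computation whose only mild subtlety is choosing the threshold $\eta/2$ so that both the large-spectrum contribution (controlled by membership in the Bohr set) and the small-spectrum contribution (controlled by the trivial bound $|1-\gamma(s)|\leq 2$) come out to be strictly less than $\eta$, thus matching the strict inequality in the definition of $B$.
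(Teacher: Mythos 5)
Your proof is correct and follows essentially the same approach as the paper: translation becomes modulation, split the spectrum into large and small Fourier coefficients at threshold $\eta/2$, bound the large spectrum via Plancherel, and pull back by $\phi_i$. The only cosmetic difference is that the paper phrases the containment contrapositively while you verify $B(\Lambda;\eta)\subseteq B$ directly by case analysis.
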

\begin{proof}
Note that if $\| \widehat{f_i} - \widehat{f_{i, \phi_i(t)}} \|_\infty \geq \eta$, then for some $\gamma \in \Gamma$, 
\[
|\widehat{f_i}(\gamma) - \widehat{f_{i, \phi_i(t)}}(\gamma)| = |1 - \gamma(\phi_i(t))| |\widehat{f_i}(\gamma)| \geq \eta.
\]
This implies that $|1 - \gamma(\phi_i(t))| \geq \eta$ and $\gamma \in \Lambda_i := \{ \lambda \in \Gamma : |\widehat{f_i}(\lambda)| \geq \eta /2 \}$.

We have thus shown that 
\[
  B( \bigcup_{i=1}^k \Lambda_i \circ \phi_i; \eta) \subset B,  
\]
where $\Lambda_i \circ \phi_i: = \{ \gamma \circ \phi_i: \gamma \in \Lambda_i\} \subset \Gamma$. By  Parseval's formula, 
\[
\left( \frac{\eta}{2} \right)^2 |\Lambda_i| \leq \sum_{\lambda \in \Lambda_i} \left|\widehat{f_i}(\lambda)\right|^2 \leq 1.
\]
Therefore, $|\Lambda_i| \leq \frac{4}{\eta^2}$ and $|\bigcup_{i=1}^k \Lambda_i \circ \phi_i| \leq \frac{4k}{\eta^2}$.
\end{proof}

Next lemma is needed in \cref{sec:z_and_field}.

\begin{lemma} \label{lem:subgroup-bohr}
Let $H$ be a locally compact abelian group, $K$ be a closed subgroup of finite index $m$. Then $K$ is a Bohr-$(m, |e(1/m)-1|)$ set in $H$.
\end{lemma}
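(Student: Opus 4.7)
The plan is to realize $K$ itself as a Bohr set of the form $B(\Lambda;\eta)$, where $\Lambda$ is the annihilator of $K$ in $\widehat{H}$ and $\eta := |e(1/m)-1| = 2\sin(\pi/m)$.

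First I would observe that, since $K$ is closed of finite index, each of its finitely many cosets is closed, hence the complement of $K$ is closed and $K$ is also open. This makes $H/K$ a discrete finite abelian group of order $m$, and by Pontryagin duality its dual is also finite of order $m$. The continuous characters of $H$ that are trivial on $K$ are precisely the lifts of characters of $H/K$, so taking $\Lambda := \{\gamma \in \widehat{H} : \gamma|_K \equiv 1\}$ gives $|\Lambda| = m$, matching the rank in the statement.

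Next I would verify the two inclusions. The forward inclusion $K \subseteq B(\Lambda;\eta)$ is immediate from the definition of $\Lambda$, since $\gamma(x)=1$ for every $\gamma \in \Lambda$ and every $x \in K$. For the reverse inclusion, assume $x \notin K$, so $\bar{x} := x+K$ is a nonzero element of $H/K$ of some order $d$ with $2 \leq d \leq m$. Using the structure theorem for finite abelian groups, I would produce a character $\chi$ of $H/K$ with $\chi(\bar{x}) = e(1/d)$; concretely, decompose $H/K$ into cyclic factors, pick a factor in which $\bar{x}$ has a component of order $d$, and let $\chi$ be trivial on the other factors and send that component to a primitive $d$-th root of unity. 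The corresponding $\gamma \in \Lambda$ then satisfies
\[
|\gamma(x) - 1| \;=\; |e(1/d)-1| \;=\; 2\sin(\pi/d) \;\geq\; 2\sin(\pi/m) \;=\; \eta,
\]
using that $\sin$ is increasing on $(0,\pi/2]$ and $\pi/d \geq \pi/m$. This violates the strict inequality defining $B(\Lambda;\eta)$, so $x \notin B(\Lambda;\eta)$, giving $B(\Lambda;\eta) \subseteq K$.

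The only slightly subtle point is the choice of $\chi$: one needs a character whose value on $\bar{x}$ is a primitive root of unity whose order equals the order of $\bar{x}$, rather than some arbitrary $d$-th root of unity $e(k/d)$ whose distance to $1$ could be as small as $2\sin(\pi k/d)$ for $k$ not coprime to $d$. Invoking the structure theorem and choosing the coordinate carefully sidesteps this cleanly; everything else is elementary Pontryagin duality.
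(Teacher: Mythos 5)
Your proof is correct and uses the same basic setup as the paper: take $\Lambda$ to be the annihilator of $K$ (equivalently, the lifts to $H$ of all $m$ characters of $H/K$), verify $K\subseteq B(\Lambda;\eta)$ trivially, and show $B(\Lambda;\eta)\subseteq K$ by finding, for each $x\notin K$, some $\gamma\in\Lambda$ with $|\gamma(x)-1|\geq\eta$.

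However, the ``slightly subtle point'' you flag is not actually an issue, and addressing it via the structure theorem is an unnecessary detour. Since $H/K$ has order $m$, \emph{every} $\gamma\in\Lambda$ sends \emph{every} $x$ to an $m$-th root of unity: $\gamma(x)^m=\gamma(mx)=1$ because $mx\in K$. The nonzero $m$-th roots of unity are $e(k/m)$ for $k=1,\dots,m-1$, and $|e(k/m)-1|=2\sin(\pi k/m)$ attains its minimum at $k=1$ (or $k=m-1$), giving exactly $|e(1/m)-1|$. So for any $\gamma\in\Lambda$ whatsoever, $\gamma(x)\neq 1$ automatically forces $|\gamma(x)-1|\geq|e(1/m)-1|$; there is no need to engineer a character whose value on $\bar x$ is a primitive root of unity of exact order $d=\operatorname{ord}(\bar x)$. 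Since characters of the finite group $H/K$ separate points, $x\notin K$ already guarantees some $\gamma\in\Lambda$ with $\gamma(x)\neq 1$, and that is all you need. This is precisely the paper's argument, and it is shorter because it never descends to the cyclic decomposition of $H/K$. Your concern about $e(k/d)$ with $k$ not coprime to $d$ being ``too close to $1$'' is misplaced: $2\sin(\pi k/d)\geq 2\sin(\pi/d)\geq 2\sin(\pi/m)$ for all $1\leq k\leq d-1$ and $d\leq m$, whether or not $\gcd(k,d)=1$.
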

\begin{proof}
Let $\chi_1, \ldots, \chi_m$ be all characters on $H/K$. For any $x \in H/K$ and $1 \leq i \leq m$, we have $|\chi_i(x)|^m = 1$, so either $\chi_i(x) = 1$ or $|\chi_i(x) -1| \geq |e(1/m)-1|$. If $\chi_i(x) = 1$ for all $i$ then $x=0$. Hence
\[
\{ 0 \} = B(\chi_1, \ldots, \chi_m; |e(1/m)-1|).
\]
The characters $\chi_i$ lift to characters $\tilde{\chi_i}$ on $H$ by $\tilde{\chi_i} (h) = \chi_i(h+K)$. Therefore,
\[
K = B(\tilde{\chi_1}, \ldots, \tilde{\chi_m}; |e(1/m)-1|),
\]
as desired.
\end{proof}

We will also need Bogolyubov's theorem for compact abelian groups.

\begin{lemma}[Bogolyubov for compact abelian groups, see {\cite[Lemma 2.1]{ruzsa6}}]
\label{lem:bogolyubov_compact_group}
Let $G$ be a compact abelian group with Haar measure $\mu$ and let $A \subseteq G$ of positive measure. Then $A - A + A - A$ contains a Bohr-$(k, \eta)$ set where $k, \eta$ depends only on $\mu(A)$.
\end{lemma}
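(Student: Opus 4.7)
The plan is to carry out the classical Fourier-analytic proof of Bogolyubov's theorem, adapted to the compact abelian group setting where it proceeds without any change of substance. Set $f = 1_A$ and consider the fourfold convolution
\[
g := 1_A * 1_{-A} * 1_A * 1_{-A},
\]
whose support is exactly $A - A + A - A$. So to prove the lemma it suffices to exhibit a Bohr set on which $g$ is strictly positive, with rank and radius depending only on $\delta := \mu(A)$.

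By the convolution theorem, $\widehat{g}(\gamma) = |\widehat{1_A}(\gamma)|^4$, which is nonnegative and lies in $\ell^1(\Gamma)$ since $\sum_\gamma |\widehat{1_A}(\gamma)|^4 \leq \|\widehat{1_A}\|_\infty^2 \sum_\gamma |\widehat{1_A}(\gamma)|^2 \leq \delta^3$ by Plancherel. Hence Fourier inversion gives the pointwise identity
\[
g(x) = \sum_{\gamma \in \Gamma} |\widehat{1_A}(\gamma)|^4 \gamma(x).
\]
The trivial character contributes $|\widehat{1_A}(0)|^4 = \delta^4$, and this is the main term we want to preserve.

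Next I would introduce the ``large spectrum'' $\Lambda := \{ \gamma \in \Gamma : |\widehat{1_A}(\gamma)| > \eta_0 \}$ for a threshold $\eta_0$ to be chosen, together with the Bohr set $B := B(\Lambda; \tau)$. Plancherel immediately gives $|\Lambda| \leq \delta/\eta_0^2$. For $x \in B$, I split the sum $g(x) - \delta^4$ into low-frequency ($\gamma \in \Lambda \setminus \{0\}$) and high-frequency ($\gamma \notin \Lambda$) parts, and estimate
\[
|g(x) - \delta^4| \leq \tau \sum_{\gamma \in \Lambda} |\widehat{1_A}(\gamma)|^4 \;+\; 2 \sum_{\gamma \notin \Lambda} |\widehat{1_A}(\gamma)|^4 \leq \tau \delta^3 + 2\eta_0^2 \delta,
\]
using $|\gamma(x) - 1| < \tau$ on $\Lambda$ and $|\widehat{1_A}(\gamma)|^2 \leq \eta_0^2$ off $\Lambda$ together with Plancherel to bound the tail. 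Choosing $\eta_0$ so that $2\eta_0^2 \delta \leq \delta^4/3$ (i.e.\ $\eta_0 \asymp \delta^{3/2}$) and $\tau = \delta/3$ makes both error terms at most $\delta^4/3$, so $g(x) \geq \delta^4/3 > 0$ for every $x \in B$. This yields a Bohr-$(k,\eta)$ set inside $A-A+A-A$ with $k = O(\delta^{-2})$ and $\eta = \delta/3$.

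There is no real obstacle here; the argument is textbook Fourier analysis on a compact abelian group. The only thing requiring a touch of care is the passage from $\widehat{g} \in \ell^1$ to the pointwise Fourier inversion formula for $g$, and bookkeeping the two error terms so that the combined error is strictly smaller than the main term $\delta^4$. Everything else (existence of Haar measure, Plancherel, the fact that $(A-A)+(A-A) = A-A+A-A$) is immediate.
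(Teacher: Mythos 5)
Your argument is the standard Fourier-analytic proof of Bogolyubov's lemma, and it is essentially correct; the paper does not give its own proof here, but cites Ruzsa's Lemma 2.1, which is proved by exactly this method. One small slip worth flagging: the displayed estimate
\[
|g(x) - \delta^4| \leq \tau \sum_{\gamma \in \Lambda} |\widehat{1_A}(\gamma)|^4 + 2 \sum_{\gamma \notin \Lambda} |\widehat{1_A}(\gamma)|^4
\]
is not quite right as written, because $g(x) - \delta^4 = \sum_{\gamma \neq 0}|\widehat{1_A}(\gamma)|^4\gamma(x)$ and on the low-frequency terms you only have $|\gamma(x)| = 1$, not a factor of $\tau$. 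What you actually bound is $|g(x) - g(0)|$, where $g(0) = \sum_\gamma |\widehat{1_A}(\gamma)|^4$: the $\tau$-factor arises from applying $|\gamma(x) - 1| < \tau$ to $\sum_\gamma |\widehat{1_A}(\gamma)|^4(\gamma(x) - 1)$. Since $g(0) \geq \delta^4$, your conclusion $g(x) \geq \delta^4/3$ still holds, so this is a notational rather than substantive error. (Alternatively, since $\operatorname{Re}\gamma(x) > 1 - \tau \geq 0$ for $\gamma \in \Lambda$, $x \in B(\Lambda;\tau)$ and $\tau \leq 1$, you may simply drop the low-frequency terms and obtain $g(x) \geq \delta^4 - \eta_0^2\delta$ with no $\tau$ in the error at all.)
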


\subsection{Kernels}
A kernel on $G$ is a non-negative continuous function that satisfies $\int_G K \, d\mu =1$. In our case, we will utilize the kernels supported on given Bohr sets whose Fourier transforms are non-negative. For a kernel $K$, we write $\lVert \widehat{K} \rVert_1$ to denote $\sum_{\gamma \in \Gamma} | \widehat{K}(\gamma)|$.  

\begin{lemma}[cf. {\cite[Lemma 4.3]{bhmp}}]
\label{lem:kernel_a}
    Given a finite set $\Lambda \subset \Gamma$ and $\eta \in (0, 1/2]$, there exists a kernel $K$ satisfying the following:
    \begin{enumerate}
        \item $K \geq 0, \widehat{K} \geq 0$ and $\int_G K \, d \mu = \lVert K \rVert_1 = 1$,
        \item $\lVert \widehat{K} \rVert_1 = \lVert K \rVert_{\infty} \leq 1/(C_0\eta)^{|\Lambda|}$ for some absolute constant $0 < C_0 \leq 1$, and         
        \item $K$ vanishes outside the Bohr set $B( \Lambda; \eta)$.
    \end{enumerate}
Consequently, 
\begin{equation} \label{eq:mub}
\mu(B( \Lambda; \eta)) \geq (C_0\eta)^{|\Lambda|}.
\end{equation} 
\end{lemma}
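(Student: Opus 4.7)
The plan is to construct $K$ as a normalized self-convolution of the indicator of a slightly smaller Bohr set, and to deduce \eqref{eq:mub} from properties (2) and (3). The key obstacle is a preliminary volume lower bound $\mu(B(\Lambda; \eta/2)) \geq (c\eta)^{|\Lambda|}$, needed to control $\lVert K\rVert_\infty$; I would handle this first by a pigeonhole/covering argument in the dual picture.

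\textbf{Step 1 (volume lower bound).} Consider the continuous homomorphism $\Phi \colon G \to \T^{|\Lambda|}$ defined by $\Phi(x) = (\gamma(x))_{\gamma \in \Lambda}$, let $H := \Phi(G)$ (a closed subgroup of $\T^{|\Lambda|}$), and let $\mu_H := \Phi_\ast \mu$, which by uniqueness is the Haar probability measure on $H$. With $U := \{(z_i) \in \T^{|\Lambda|} : |z_i - 1| < \eta\}$, the identity $B(\Lambda;\eta) = \Phi^{-1}(U\cap H)$ gives $\mu(B(\Lambda;\eta)) = \mu_H(H \cap U)$. Partition $\T$ into $N := \lceil 2\pi/\eta \rceil + 1$ equal arcs, chosen so that any two points in the same arc have ratio in $\{z \in \T : |z-1| < \eta\}$; this induces $N^{|\Lambda|}$ product boxes in $\T^{|\Lambda|}$. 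A maximal subset $S \subset H$ with $s/s' \notin U$ for distinct $s, s' \in S$ satisfies $|S| \leq N^{|\Lambda|}$ by pigeonhole (two such elements sharing a box would have ratio in $U$), and by maximality the translates $\{s(H \cap U)\}_{s \in S}$ cover $H$. Hence $\mu_H(H \cap U) \geq 1/N^{|\Lambda|} \geq (\eta/(4\pi))^{|\Lambda|}$.

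\textbf{Step 2 (kernel).} Set $B_0 := B(\Lambda; \eta/2)$ and define
\[
K := \frac{1_{B_0} \ast 1_{B_0}}{\mu(B_0)^2}.
\]
Then $K \geq 0$ and $\widehat{K} = |\widehat{1_{B_0}}|^2/\mu(B_0)^2 \geq 0$; integrating gives $\int_G K \, d\mu = 1$, which is (1). The support of $1_{B_0}\ast 1_{B_0}$ lies in $B_0 + B_0 \subset B(\Lambda;\eta)$ via the identity $\gamma(x+y) - 1 = \gamma(x)(\gamma(y)-1) + (\gamma(x)-1)$, which is (3). For (2), $(1_{B_0}\ast 1_{B_0})(x) = \mu(B_0 \cap (x + B_0))$ attains its maximum $\mu(B_0)$ at $x = 0$, and the Fourier series of $K$ converges absolutely since $\sum_\gamma \widehat{K}(\gamma) = \lVert 1_{B_0}\rVert_2^2/\mu(B_0)^2 = 1/\mu(B_0) < \infty$; Fourier inversion then gives $\lVert \widehat{K}\rVert_1 = K(0) = \lVert K\rVert_\infty = 1/\mu(B_0)$. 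Applying Step 1 at radius $\eta/2$ bounds this by $(8\pi/\eta)^{|\Lambda|}$, so $C_0 := 1/(8\pi)$ suffices.

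\textbf{Step 3 and main obstacle.} Inequality \eqref{eq:mub} is immediate from (2) and (3): $1 = \int_{B(\Lambda;\eta)} K\, d\mu \leq \lVert K\rVert_\infty\, \mu(B(\Lambda;\eta))$ gives $\mu(B(\Lambda;\eta)) \geq 1/\lVert K\rVert_\infty \geq (C_0\eta)^{|\Lambda|}$. The principal difficulty is Step 1, because $H$ may be a continuous (e.g.\ connected) closed subgroup of $\T^{|\Lambda|}$, so one cannot pigeonhole $H$ directly as a finite set; the device of passing to a \emph{maximal} $U$-separated subset sidesteps this, because the pigeonhole cardinality bound for any separated set, combined with maximality, automatically yields a covering of $H$ by the required number of translates of $H \cap U$.
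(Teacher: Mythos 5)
Your proof is correct, and it takes a genuinely different route from the paper's. The paper constructs, for each single character $\lambda\in\Lambda$, a kernel $K_\lambda = \frac{1_{B(\{\lambda\};\eta/2)}}{\mu(\cdot)}*\frac{1_{B(\{\lambda\};\eta/2)}}{\mu(\cdot)}$; it bounds $\mu\bigl(B(\{\lambda\};\eta/2)\bigr)\geq C_0\eta$ by pushing the Haar measure forward along $\lambda$ to the closed subgroup $\lambda(G)\subseteq S^1$ (which is either $S^1$ or a finite cyclic group, where the measure of $\{|z-1|<\eta/2\}$ is computed directly), and then forms the pointwise product $\widetilde K=\prod_\lambda K_\lambda$, normalizing at the end. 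The nonnegativity of $\widehat{\widetilde K}$ there comes from the fact that the Fourier transform of a product is an iterated convolution of the nonnegative $\widehat{K_\lambda}$'s, and $\lVert\widetilde K\rVert_1\geq 1$ is extracted from $\widehat{\widetilde K}(0)$. You instead bound the measure of the full Bohr ball $B(\Lambda;\eta/2)$ in one step, by mapping $G\to\T^{|\Lambda|}$ and running a maximal-separated-set covering argument in the image $H$, and you then perform a single self-convolution of $1_{B(\Lambda;\eta/2)}$. Your route makes the nonnegativity of $\widehat K$ and the normalization immediate (no iterated convolutions, no need to argue $\lVert\widetilde K\rVert_1\geq 1$), at the cost of proving the $|\Lambda|$-dimensional volume bound up front; the paper in fact acknowledges precisely this alternative in the remark following the lemma, attributing the covering argument to Tao, but chooses to derive \eqref{eq:mub} as a corollary of the kernel rather than as input to it. One small cosmetic point: you implicitly use that $B_0$ is symmetric (so $\widehat{1_{B_0}}$ is real and $\widehat{1_{B_0}*1_{B_0}}=\widehat{1_{B_0}}^2=|\widehat{1_{B_0}}|^2$); it is worth making that explicit, though it is automatic from $|\gamma(-x)-1|=|\gamma(x)-1|$.
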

We remark that the bound \eqref{eq:mub} can also be obtained from an elementary covering argument (see \cite{tao}). 
\begin{proof}
% \textcolor{red}{I added some details to this proof. Please check to make sure they are correct.}
    First, for each $\lambda \in \Lambda$, there exists a kernel $K_{\lambda}: G \to [0, \infty)$ satisfying the following properties:
\begin{enumerate}
\item $\lVert K_{\lambda} \rVert_1 = 1$,
\item $\widehat{K}_{\lambda} \geq 0$,
\item $K_{\lambda}$ is supported on $B(\{\lambda\}; \eta) = \{x \in G: |\lambda(x) - 1| < \eta\}$,
\item $\|K_\lambda\|_{\infty} = K_{\lambda}(0) \leq 1/(C_0 \eta)$ \text{ for some absolute constant $0 < C_0 \leq 1$}.
\end{enumerate}		
Indeed, let $B = B(\{\lambda\}; \frac{\eta}{2})$ and let $K_{\lambda} = \frac{1_B}{\mu(B)}*\frac{1_B}{\mu(B)}$. Clearly the first and second properties are satisfied. Additionally, $K_{\lambda}$ is supported on $B(\{\lambda\}; \frac{\eta}{2}) + B(\{\lambda\}; \frac{\eta}{2}) \subset B(\{\lambda\}; \eta)$. 

Concerning the last property, we have for every $x \in G$,
\[
    K_{\lambda}(x) = \sum_{\gamma \in \Gamma} \widehat{K}_{\lambda}(\gamma) \gamma(x)
\]
and so
\[
    \left| K_{\lambda}(x) \right| \leq \sum_{\gamma \in \Gamma} \widehat{K}_{\lambda}(\gamma) = K_{\lambda}(0).
\]
Therefore, $\lVert K_{\lambda} \rVert_{\infty} = K_{\lambda}(0) = \frac{1}{\mu(B)}$.
Since $\lambda$ is continuous, its image $\lambda(G)$ is a closed subgroup of $S^1 = \{ z \in \C: |z|=1\}$, and so it is either $S^1$ or $\{ z \in \C : z^q=1\}$ for some $q \in \N$. Since $\lambda$ is a homomorphism, it is measure-preserving (see Lemma \ref{lem:pushforward} below). Hence $\mu(B)$ is equal to the normalized Haar measure of the set
\[
\left\{ z \in S^1 : |z-1| < \frac{\eta}{2} \right\}
\]
in the group $\lambda(G)$. In either case, where $\lambda(G)=S^1$ or $\{ z \in \C : |z|^q=1\}$, we find that $\mu(B) \geq C_0 \eta$ for some absolute constant $0 < C_0 \leq 1$. Therefore, $\lVert K_{\lambda} \rVert_{\infty} \leq 1/(C_0 \eta)$.  
%\Hnote{Is $C=1$?} \Anote{No, it is slightly greater than $1$ since $\eta/2$ is the Euclidean distance while $\mu(B)$ corresponds to the distance on the unit circle.}
	
We now define
    \[
        \widetilde{K} = \prod_{\lambda \in \Lambda} K_{\lambda}.
    \]
    It follows that $\widetilde{K} \geq 0$ and $\widetilde{K}$ is supported on $B(\Lambda; \eta)$. Repeatedly using the fact that $\widehat{fg}(\gamma) = \sum_{\lambda \in \Gamma} \widehat{f}(\lambda) \widehat{g}(\gamma - \lambda)$ for all $f, g \in L^{\infty}(G)$, we have $\widehat{\widetilde{K}} \geq 0$. Likewise, since $\widehat{K}_{\lambda}(0) = \lVert K_{\lambda} \rVert_1 = 1$, we have $\lVert \widetilde{K} \rVert_1 = \int_G \widetilde{K} \ d \mu =  \widehat{\widetilde{K}}(0) \geq 1$.
    
    For every $x \in G$, $\widetilde{K}(x) = \sum_{\gamma \in \Gamma} \widehat{\widetilde{K}}(\gamma) \gamma(x)$ and so 
\[
    |\widetilde{K}(x)| \leq \sum_{\gamma \in \Gamma} |\widehat{\widetilde{K}}(\gamma)| = \lVert \widehat{\widetilde{K}} \rVert_1.
\]
It follows that $\lVert \widetilde{K} \rVert_{\infty} \leq \lVert \widehat{\widetilde{K}} \rVert_1$.
Moreover,
\[
    \widetilde{K}(0) = \sum_{\gamma \in \Gamma} \widehat{\widetilde{K}}(\gamma) \gamma(0) = \sum_{\gamma \in \Gamma} \widehat{\widetilde{K}}(\gamma) = \lVert \widehat{\widetilde{K}} \rVert_1
\]
because $\widehat{\widetilde{K}}(\gamma) \geq 0$ for all $\gamma$. Thus, $\lVert \widetilde{K} \rVert_{\infty} = \lVert \widehat{\widetilde{K}} \rVert_1$. Upon defining $K = \widetilde{K}/\lVert \widetilde{K} \rVert_{1}$,
    we obtain the desired kernel.
    % \textcolor{red}{We know $\widetilde{K}(0) \leq 1/(C_0 \eta)^{|\Lambda|}$. Why does $K(0)$ also satisfy this property? To this end, we need to show $\lVert \widetilde{K} \rVert_1 \geq 1$, but this might not be true. }
    % satisfies all desired properties except $\lVert \widehat{K} \rVert_1 = \lVert K \rVert_{\infty}$ which we now check. Since each $K_{\lambda}$ attains maximum at $0$, $K$ also attains maximum at $0$. Therefore,
    % \[
    %     \lVert K \rVert_{\infty} = K(0) = \sum_{\gamma \in \Gamma} \widehat{K}(\gamma) \gamma(0) = \sum_{\gamma \in \Gamma} \widehat{K}(\gamma) = \lVert \widehat{K} \rVert_1.
    % \]
\end{proof}

\subsection{Homomorphisms} We will often make use of the following facts about homomorphisms $G \rightarrow G$.

\begin{lemma}\label{lem:homo-solutions}
Let $\phi: G \rightarrow G$ be a continuous homomorphism such that $[G:\phi(G)] = m$ is finite. Then for any $\gamma \in \Gamma$, there are at most $m$ elements $\chi \in \Gamma$ such that $\gamma = \chi \circ \phi$.
\end{lemma}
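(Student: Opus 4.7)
The plan is to show that the map $\Phi : \Gamma \to \Gamma$ defined by $\Phi(\chi) = \chi \circ \phi$ is a group homomorphism whose kernel has exactly $m$ elements; the result then follows because every non-empty fiber of $\Phi$ is a coset of $\ker \Phi$.

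The first step is to identify $\ker \Phi$. By definition,
\[
\ker \Phi = \{ \chi \in \Gamma : \chi(\phi(g)) = 1 \text{ for all } g \in G\},
\]
which is the annihilator (in $\Gamma$) of the closed subgroup $H := \phi(G)$ of $G$. Note that $H$ is closed because $G$ is compact and $\phi$ is continuous, so $\phi(G)$ is compact, hence closed in $G$.

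The second step is to invoke Pontryagin duality: characters on $G$ that annihilate $H$ are in one-to-one correspondence with characters on the quotient $G/H$, via $\chi \mapsto \tilde\chi$ where $\tilde\chi(g+H)=\chi(g)$. Since $[G:H]=m$ is finite, $G/H$ is a finite (discrete) abelian group of order $m$, and its dual group therefore has cardinality $m$ as well. Thus $|\ker \Phi| = m$.

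The final step is the coset observation: if $\chi_0 \in \Gamma$ satisfies $\chi_0 \circ \phi = \gamma$, then any other $\chi \in \Gamma$ with $\chi \circ \phi = \gamma$ satisfies $(\chi - \chi_0) \circ \phi = 0$, i.e.\ $\chi - \chi_0 \in \ker \Phi$. Hence the fiber $\Phi^{-1}(\gamma)$ is either empty or a coset of $\ker \Phi$, and in either case has at most $m$ elements. There is no real obstacle here; the only thing to be careful about is that $\phi(G)$ really is closed (needed for the duality identification to work cleanly), which follows immediately from compactness of $G$ and continuity of $\phi$.
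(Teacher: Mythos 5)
Your proof is correct and follows essentially the same route as the paper: identify the fiber over $\gamma$ as a coset of the kernel of $\chi \mapsto \chi \circ \phi$, recognize that kernel as the annihilator of $\phi(G)$, and use duality of the finite quotient $G/\phi(G)$ to count its elements. The paper cites Rudin's Theorem 2.1.2 for the annihilator identification, while you spell out the underlying correspondence directly, but the argument is the same.
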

\begin{proof}
It is easy to see that for each $\gamma \in \Gamma$, the set $S_\gamma := \{ \chi \in \Gamma : \gamma = \chi \circ \phi\}$ is either empty, or a coset of the group $S_0$. On the other hand, $S_0$ is the annihilator of the group $\phi(G)$, so by \cite[Theorem 2.1.2]{rudin}, it is isomorphic to $G/\phi(G)$, and hence has cardinality $m$.
\end{proof}

\begin{lemma} \label{lem:composition}
Let $\phi, \psi: G \rightarrow G$ be  homomorphisms such that $[G:\phi(G)] = m$ and $[G:\psi(G)] = \ell$ are finite. Then
$[G: \phi ( \psi (G))] \leq m\ell$ is finite.
\end{lemma}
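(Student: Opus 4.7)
The plan is to use the tower law for subgroup indices together with a surjection induced by $\phi$. Since $\phi(\psi(G))$ is a subgroup of $\phi(G)$, which is itself a subgroup of $G$, we have the multiplicative identity
\[
[G : \phi(\psi(G))] \;=\; [G : \phi(G)] \cdot [\phi(G) : \phi(\psi(G))].
\]
The first factor equals $m$ by hypothesis, so it suffices to prove $[\phi(G) : \phi(\psi(G))] \leq \ell$.

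To bound the second index, I would define a map
\[
\Phi : G/\psi(G) \longrightarrow \phi(G)/\phi(\psi(G)), \qquad g + \psi(G) \longmapsto \phi(g) + \phi(\psi(G)).
\]
This is well-defined because if $g - g' \in \psi(G)$, then $\phi(g) - \phi(g') = \phi(g - g') \in \phi(\psi(G))$, and it is a group homomorphism since $\phi$ is. Surjectivity is immediate: any coset $\phi(g) + \phi(\psi(G))$ in $\phi(G)/\phi(\psi(G))$ is the image of $g + \psi(G)$. Hence
\[
[\phi(G) : \phi(\psi(G))] \;\leq\; [G : \psi(G)] \;=\; \ell,
\]
and combining with the tower law gives $[G : \phi(\psi(G))] \leq m\ell$.

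There is no serious obstacle here; the only thing to double-check is that one does not need continuity of $\phi, \psi$ or any topological hypothesis in this particular step, since the argument is purely algebraic about index of subgroups in abelian groups.
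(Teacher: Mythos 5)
Your proof is correct and is essentially the same as the paper's: the paper also reduces via the tower law to bounding $[\phi(G):\phi(\psi(G))]$ by $\ell$, and its observation that the $\ell$ coset representatives of $\psi(G)$ in $G$ map under $\phi$ onto all cosets of $\phi(\psi(G))$ in $\phi(G)$ is precisely the surjectivity of your induced map $\Phi$. Your remark that continuity plays no role is right and matches the lemma's statement, which assumes only that $\phi,\psi$ are homomorphisms.
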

\begin{proof}
We have $[G:\phi(\psi(G))] = [G:\phi(G)] [\phi(G): \phi(\psi(G))]$. It suffices to show that $[\phi(G): \phi(\psi(G))] \leq \ell$. 

Let $x_1 + \psi(G), \ldots, x_\ell + \psi(G)$ be all cosets of $\psi(G)$ in $G$. Then $\phi(x_1) + \phi(\psi(G)), \ldots, \phi(x_\ell) + \phi(\psi(G))$ are all cosets of $\phi(\psi(G))$ in $\phi(G)$ (these are not necessarily distinct, so the actual number of cosets may be less than $\ell$), proving the desired claim. 
\end{proof}

\begin{lemma}
\label{lem:pushforward}
    Let $G, H$ be compact abelian groups and $\mu, \nu$ be the normalized Haar measures of $G$ and $H$, respectively. Suppose $\phi: G \to H$ is a continuous surjective homomorphism. Then $\phi_* \mu = \nu$ (i.e. $\nu(B) = \mu( \phi^{-1}(B))$ for any Borel set $B \subset H$).
\end{lemma}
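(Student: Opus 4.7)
The plan is to invoke uniqueness of the Haar measure on the compact abelian group $H$. Recall that a normalized Haar measure on $H$ is characterized (among regular Borel probability measures) by translation invariance, so it suffices to verify that $\phi_*\mu$ is a translation-invariant Borel probability measure on $H$.

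First, since $\phi$ is continuous it is Borel measurable, so $\phi_*\mu$ is a well-defined Borel measure on $H$, and $(\phi_*\mu)(H)=\mu(\phi^{-1}(H))=\mu(G)=1$, making it a probability measure. Moreover $H$ is compact Hausdorff and $\phi_*\mu$ is a finite Borel measure, so standard measure theory (or the fact that compact metrizable groups, or more generally compact Hausdorff groups arising as continuous images of compact groups, admit only regular Borel probability measures in the relevant sense) guarantees the regularity needed for the uniqueness statement.

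Second, I verify translation invariance. Given $h\in H$, use surjectivity to choose $g\in G$ with $\phi(g)=h$. For any Borel set $B\subseteq H$, we have $\phi^{-1}(B+h)=\phi^{-1}(B)+g$: indeed, $x\in\phi^{-1}(B+h)$ iff $\phi(x)-h\in B$ iff $\phi(x-g)\in B$ iff $x-g\in\phi^{-1}(B)$. Hence by translation invariance of $\mu$,
\[
(\phi_*\mu)(B+h)=\mu(\phi^{-1}(B)+g)=\mu(\phi^{-1}(B))=(\phi_*\mu)(B).
\]
Thus $\phi_*\mu$ is a translation-invariant regular Borel probability measure on $H$, and by uniqueness of the normalized Haar measure on a compact abelian group, $\phi_*\mu=\nu$.

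The only real subtlety, and hence the ``hard part,'' is the regularity/Radon condition required to apply uniqueness of Haar measure. For compact Hausdorff groups this is standard (every finite Borel measure on a compact Hausdorff group that arises as a continuous pushforward of a Radon measure is itself Radon), and for the applications in this paper one may restrict attention to second-countable/metrizable $G,H$, where regularity of finite Borel measures is automatic. Apart from this point, the argument is a direct invariance computation.
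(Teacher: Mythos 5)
Your proof is correct and takes essentially the same route as the paper's: verify that $\phi_*\mu$ is a translation-invariant Borel probability measure on $H$ (via $\phi^{-1}(B+h)=\phi^{-1}(B)+g$ for any $g$ with $\phi(g)=h$) and invoke uniqueness of the normalized Haar measure. The only addition in your write-up is the explicit remark about regularity/Radon-ness, which the paper's proof passes over in silence; that is a reasonable point of care but does not change the substance of the argument.
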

\begin{proof}
   %\textcolor{red}{to be filled}.\Anote{Done.}
    Let $\nu_0 = \phi_* \mu$. By the uniqueness of the normalized Haar measure, it suffices to show that $\nu_0$ is a translation-invariant probability measure on $H$. First, $\nu_0$ is a probability measure because $\nu_0(H) = \mu(\phi^{-1}(H)) = \mu(G) = 1$. Now let $B \subset H$ be a Borel set and $h_0 \in H$ be arbitrary. Since $\phi$ is surjective, there exists $g_0 \in G$ such that $\phi(g_0) = h_0$. For any $g \in \phi^{-1}(B + h_0)$, we have
    \[
        \phi(g - g_0) = \phi(g) - \phi(g_0) \in B + h_0 - h_0 = B.
    \]
    Therefore, $\phi^{-1}(B + h_0) \subseteq \phi^{-1}(B) + g_0.$ On the other hand, 
    \[
        \phi(\phi^{-1}(B) + g_0) \subseteq B + h_0
    \]
    and so $\phi^{-1}(B + h_0) = \phi^{-1}(B) + g_0$. Since $\mu$ is translation-invariant on $G$, it follows that
    \[
        \nu_0(B + h_0) = \mu(\phi^{-1}(B + h_0)) = \mu(\phi^{-1}(B) + g_0) = \mu(\phi^{-1}(B)) = \nu_0(B).
    \]
    Thus $\nu_0$ is translation-invariant on $H$ and so $\nu_0 = \nu$. 
\end{proof}

\begin{lemma} \label{lem:finite-index}
Let $\phi:G \rightarrow G$ be a continuous homomorphism such that $[G:\phi(G)]=m$ is finite. Then for any measurable set $A \subset G$, we have 
\begin{equation}
\mu( A ) \leq m \mu( \phi(A) )
\end{equation}
and
\begin{equation}
\mu( \phi^{-1}(A) ) \leq m \mu( A ).
\end{equation}
Consequently, if $f \in L^1(G)$ is nonnegative, then
\[
\int_G f(x) \, d\mu(x) \geq \frac{1}{m} \int_G f( \phi(x) ) \, d\mu(x).
\]
\end{lemma}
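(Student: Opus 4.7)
The plan is to reduce everything to the previous pushforward lemma (\cref{lem:pushforward}) applied to the surjection $\phi: G \to H := \phi(G)$, after identifying how the normalized Haar measure on $H$ relates to the restriction of $\mu$ to $H$.

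First I would observe that since $[G:H] = m < \infty$ and $G$ is compact, $H$ is a closed subgroup of finite index, hence also open. A standard argument (cover $G$ by the $m$ cosets of $H$, which are disjoint, open, and translates of each other, so have equal $\mu$-measure) gives $\mu(H) = 1/m$. Consequently, if $\nu$ denotes the normalized Haar measure on $H$, then for every Borel $B \subseteq H$,
\[
\nu(B) \;=\; m\,\mu(B),
\]
simply because $m \mu|_H$ is a translation-invariant probability measure on $H$ and the normalized Haar measure is unique. \cref{lem:pushforward} applied to $\phi: G \to H$ (which is continuous and surjective onto $H$) then yields $\phi_* \mu = \nu$.

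For the first inequality, I would use the chain $A \subseteq \phi^{-1}(\phi(A))$, so that
\[
\mu(A) \;\leq\; \mu\bigl(\phi^{-1}(\phi(A))\bigr) \;=\; (\phi_*\mu)(\phi(A)) \;=\; \nu(\phi(A)) \;=\; m\,\mu(\phi(A)).
\]
For the second inequality, note $\phi^{-1}(A) = \phi^{-1}(A\cap H)$, so
\[
\mu(\phi^{-1}(A)) \;=\; (\phi_*\mu)(A\cap H) \;=\; \nu(A\cap H) \;=\; m\,\mu(A\cap H) \;\leq\; m\,\mu(A).
\]
Finally, for the consequence about $f \in L^1(G)$ nonnegative, I would apply the abstract change-of-variables formula for pushforwards:
\[
\int_G f(\phi(x))\,d\mu(x) \;=\; \int_H f(y)\,d\nu(y) \;=\; m\int_H f(y)\,d\mu(y) \;\leq\; m\int_G f(y)\,d\mu(y),
\]
and divide by $m$.

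The only conceptual subtlety will be the measurability of $\phi(A)$ in the first inequality; I would handle this either by taking $A$ Borel (so that $\phi(A)$ is analytic, hence universally measurable, and in particular lies in the $\mu$-completion) or by simply interpreting $\mu(\phi(A))$ as the outer measure, for which the inequality $\mu(A) \leq m\,\mu^*(\phi(A))$ still follows from the displayed chain, since $\phi^{-1}(\phi(A))$ is the Borel (indeed saturated-by-$\ker\phi$) set $A+\ker\phi$. Beyond this point the argument is a direct application of \cref{lem:pushforward} and the index computation $\mu(H)=1/m$, so I do not anticipate any real obstacle.
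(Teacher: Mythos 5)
Your proof is correct and takes essentially the same approach as the paper's: both hinge on identifying the pushforward $\phi_*\mu$ with $m$ times the restriction of $\mu$ to $\phi(G)$ via \cref{lem:pushforward}, and both derive the first inequality from the containment $A \subseteq \phi^{-1}(\phi(A))$. The only cosmetic differences are that you derive the second inequality directly from the pushforward identity rather than by applying the first inequality to $\phi^{-1}(A)$, and you invoke the abstract change-of-variables formula for the integral inequality where the paper appeals to approximation by simple functions; your remark on measurability of $\phi(A)$ is a worthwhile clarification the paper elides.
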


\begin{proof}
First, since $\phi$ is continuous and $G$ is compact, $\phi(G)$ is a compact subgroup of $G$. Since $G$ is Hausdorff, $\phi(G)$ is closed. In other words, $\phi(G)$ is a closed subgroup of $G$. 

%First we show that $nG$ is closed. Indeed, let $(g_i)$ be a sequence in $G$ and $g = \lim_{i \to \infty} n g_i$. Since $G$ is compact, there exists a convergent subsequence $(g_i')$ of $(g_i)$. Let $h = \lim_{i \to \infty} g_i'$. Then by continuity, $nh = g$. Therefore $g \in nG$. Hence $nG$ is closed. Since any closed subset of compact set is compact, we have $nG$ is a compact subgroup of $G$ (\textcolor{red}{check!}).

Observe that since $G$ is partitioned into $m$-many translates of $\phi(G)$, $\mu(\phi(G)) = 1/m$.
Define $\lambda(B) = m \mu(B)$ for any Borel set $B \subseteq \phi(G)$. Now $\lambda$ is a translation-invariant probability measure on $\phi(G)$, and so it is equal to the normalized Haar measure on $\phi(G)$. By \cref{lem:pushforward}, $\lambda = \phi_* \mu$. This means that for any Borel set $B \subset \phi(G)$, we have $\mu(\phi^{-1}(B)) = m \mu(B)$.

Let $A$ be any Borel set in $G$. Since $A \subset \phi^{-1}(\phi(A))$, we have $\mu(A) \leq \mu(\phi^{-1}(\phi(A)) = m \mu(\phi(A))$, and the first assertion is proved. Applying the first assertion to the set $\phi^{-1}(A)$, we get the second assertion. 

The third assertion follows from the second one, and the fact that $f$ can be approximated by functions of the form $\sum_{i=1}^n c_i 1_{A_i}$ for Borel sets $A_i$ and $c_i \geq 0$.
\end{proof}

The next lemmas deal with images and preimages of Bohr sets under homomorphisms.
 
\begin{lemma} \label{lem:bohr-homo1}
Let $B \subset G$ be a Bohr-$(k, \eta)$ set and $\phi: G \rightarrow G$ be a continuous homomorphism. Then $\phi^{-1}(B)$ is also a Bohr-$(k, \eta)$ set.
\end{lemma}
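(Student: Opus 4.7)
The plan is to unpack the definition of a Bohr set on both sides and observe that the preimage structure is preserved by precomposition with a continuous homomorphism. Specifically, write $B = B(\Lambda; \eta)$ for some finite set $\Lambda \subset \Gamma$ with $|\Lambda| = k$. Then by the definition of the preimage,
\[
\phi^{-1}(B) = \{ x \in G : |\gamma(\phi(x)) - 1| < \eta \text{ for every } \gamma \in \Lambda \}.
\]

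Next, I would invoke the observation made in the Notation subsection: if $\gamma \in \Gamma$ and $\phi : G \to G$ is a continuous homomorphism, then $\gamma \circ \phi \in \Gamma$. Setting $\Lambda \circ \phi := \{ \gamma \circ \phi : \gamma \in \Lambda \} \subset \Gamma$, the displayed set above is exactly $B(\Lambda \circ \phi; \eta)$. Since $|\Lambda \circ \phi| \leq |\Lambda| = k$, this already exhibits $\phi^{-1}(B)$ as a Bohr set of rank at most $k$ and radius $\eta$.

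To conclude that it is a Bohr-$(k, \eta)$ set in the strict sense (rank exactly $k$), I would pad $\Lambda \circ \phi$ with copies of the trivial character $\gamma_0 \equiv 1$, which contribute no additional constraint because $|\gamma_0(x) - 1| = 0 < \eta$ for every $x \in G$. This yields a defining family of exactly $k$ characters without changing the underlying set.

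There is no real obstacle here; the only very mild subtlety is the possibility that distinct characters in $\Lambda$ collapse to the same character after composition with $\phi$ (for instance when $\phi$ has large kernel), which would make $|\Lambda \circ \phi|$ strictly smaller than $k$. The padding argument in the previous paragraph handles this cleanly.
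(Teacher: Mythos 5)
Your proof is correct and matches the paper's argument exactly: in both, one writes $B=B(\Lambda;\eta)$ and observes that $\phi^{-1}(B)=B(\Lambda\circ\phi;\eta)$, using that precomposition with a continuous homomorphism sends characters to characters. The padding remark at the end is fine but unnecessary, since ``Bohr-$(k,\eta)$'' is understood as rank at most $k$.
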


\begin{proof}
If $B = \{ x \in G: |\gamma_i(x) -1| < \eta \textup{ for }i=1,  \ldots,  k\}$ is a Bohr-$(k, \eta)$ set, then $\phi^{-1}(B) = \{ x \in G: |\gamma_i \circ \phi (x) -1| < \eta \textup{ for }i=1,  \ldots,  k\}$ is also a Bohr-$(k, \eta)$-set.
\end{proof}

% \begin{lemma}[Lower bound for measure of Bohr sets]
% \label{lem:bigness_bohr_set}
% If $G$ be a compact abelian group with Haar measure $\mu$ then every Bohr-$(k, \eta)$ set $B$ of $G$ satisfies $\mu(B) \geq \eta^k$.
% \end{lemma}
% \begin{proof}
% \Anote{Need a proof or reference.} \Hnote{This is already contained in Lemma 13}
% \end{proof}

The next lemma is more surprising.

\begin{lemma}[cf. Griesmer {\cite[Lemma 1.7] {griesmer-br}}] \label{lem:bohr-homo2}
Let $B \subset G$ be a Bohr-$(k, \eta)$ set and $\phi: G \rightarrow G$ be a continuous homomorphism such that $[G:\phi(G)] = m < \infty$. Then $\phi(B)$ contains a Bohr-$(k', \eta')$ set, where $k', \eta'$ depend on $k, \eta$ and $m$.
\end{lemma}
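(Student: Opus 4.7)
The plan is to transfer the problem to the closed subgroup $H := \phi(G)$ via the surjective homomorphism $\phi : G \to H$, apply Bogolyubov's theorem inside the compact group $H$, and then lift the resulting Bohr set back to $G$ using the fact that $H$ itself is a Bohr set in $G$.

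The first step is a shrinking trick that makes Bogolyubov usable. Write $B = B(\Lambda; \eta)$ with $|\Lambda| = k$ and set $B_0 := B(\Lambda; \eta/4)$. A standard triangle-inequality estimate on characters shows $B_0 + B_0 - B_0 - B_0 \subseteq B$, so applying $\phi$ gives
\[
\phi(B_0) + \phi(B_0) - \phi(B_0) - \phi(B_0) \subseteq \phi(B_0 + B_0 - B_0 - B_0) \subseteq \phi(B).
\]
Thus any Bohr set we can exhibit inside the iterated sumset of $\phi(B_0)$ in $H$ will automatically sit inside $\phi(B)$.

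Next I bound $\mu_H(\phi(B_0))$ from below. By \cref{lem:kernel_a} we have $\mu_G(B_0) \geq (C_0 \eta/4)^k$. Viewing $\phi$ as a surjective continuous homomorphism $G \to H$ and invoking \cref{lem:pushforward}, the pushforward $\phi_\ast \mu_G$ equals the normalized Haar measure $\mu_H$ on $H$, so together with $\phi^{-1}(\phi(B_0)) \supseteq B_0$ we get $\mu_H(\phi(B_0)) \geq (C_0 \eta / 4)^k > 0$. Now \cref{lem:bogolyubov_compact_group} applied in the compact abelian group $H$ produces a Bohr-$(k_1, \eta_1)$ set $B_H$ contained in $\phi(B_0) + \phi(B_0) - \phi(B_0) - \phi(B_0)$, with $k_1$ and $\eta_1$ depending only on $\mu_H(\phi(B_0))$, hence only on $k$ and $\eta$. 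By the previous paragraph, $B_H \subseteq \phi(B)$.

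Finally, I lift $B_H$ back to $G$. Each of the $k_1$ defining characters on $H$ extends to a character on $G$ by Pontryagin duality for the closed subgroup $H$, producing a Bohr-$(k_1, \eta_1)$ set $\widetilde{B}_H$ in $G$ with $B_H = H \cap \widetilde{B}_H$. Since $[G : H] = m < \infty$, \cref{lem:subgroup-bohr} shows that $H$ itself is a Bohr-$(m, |e(1/m) - 1|)$ set in $G$, and intersecting the two Bohr sets yields a Bohr-$(k_1 + m, \min(\eta_1, |e(1/m) - 1|))$ set in $G$ contained in $\phi(B)$, giving the desired $k'$ and $\eta'$ depending only on $k, \eta, m$. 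The conceptually delicate point is the shrinking argument at the very start: it is what converts Bogolyubov's iterated-sumset conclusion into information about $\phi(B)$ itself rather than about some larger sumset; everything else is bookkeeping via the pushforward lemma and standard character extension.
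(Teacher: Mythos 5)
Your proof is correct, and it reaches the same conclusion via a genuinely different intermediate route. The paper stays entirely inside $G$: after the same $\eta/4$ shrinking trick, it bounds the $\mu_G$-measure of $\phi(A)$ from below by $\mu_G(A)/m$ using \cref{lem:finite-index}, then applies \cref{lem:bogolyubov_compact_group} to $\phi(A)$ \emph{as a subset of $G$}, so the resulting Bohr set is already a Bohr set in $G$ and nothing needs to be lifted. You instead work in the closed subgroup $H=\phi(G)$, where the pushforward lemma gives the cleaner bound $\mu_H(\phi(B_0)) \geq \mu_G(B_0)$ with no $1/m$ loss; but this buys a Bohr set only in $H$, so you must then extend its defining characters to $G$ and intersect with $H$ via \cref{lem:subgroup-bohr}, and this is where $m$ re-enters (contributing $m$ extra characters and a radius cap of $|e(1/m)-1|$). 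Both arguments are valid; the paper's is shorter because it pays the price of $m$ once up front in the measure bound and never leaves $G$, whereas yours distributes the $m$-dependence to the lifting step and needs the additional ingredients (\cref{lem:pushforward}, \cref{lem:subgroup-bohr}, and the character-extension fact for closed subgroups of compact abelian groups, which the paper uses elsewhere but not here). One small point worth making explicit in your write-up: you invoke Bogolyubov for $\phi(B_0) + \phi(B_0) - \phi(B_0) - \phi(B_0)$, whereas the lemma is stated for $A-A+A-A$; this is fine because $B_0$, hence $\phi(B_0)$, is symmetric, but the symmetry should be noted.
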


\begin{proof}
% \Hnote{Find proof that does not use Bogolyubov!!!!!!}
% \Anote{Here is a proof using Bogolyubov.}
Suppose $B = \{x \in G: |\gamma_i(x) - 1| < \eta \text{ for } 1 \leq i \leq k\}$ where $\gamma_i \in \Gamma$. Then 
\[
    A = \{x \in G: |\gamma_i(x) - 1| < \eta/4 \text{ for } 1 \leq i \leq k\}
\]
satisfies $A - A + A - A \subseteq B$. The bound \eqref{eq:mub} implies that $\mu(A) \geq (C_0\eta/4)^k$ for some absolute constant $C_0>0$. 

In view of \cref{lem:finite-index}, $\mu(\phi(A)) \geq \mu(A)/m \geq \frac{(C_0 \eta)^k}{4^k m}$. Therefore, by \cref{lem:bogolyubov_compact_group}, the set $\phi(B) \supseteq \phi(A) - \phi(A) + \phi(A) - \phi(A)$ is a Bohr-$(k', \eta')$ set where $k', \eta'$ depend only on $\mu(\phi(A))$, which is bounded below by $\frac{(C_0 \eta)^k}{4^k m}$.
\end{proof}

\subsection{Counting lemmas} 
% \textcolor{red}{Do we want to keep $\lVert K \rVert_{A(G)}$ or $\lVert K \rVert_1$?}

% For a function $K: G \to \R$, we define
% \begin{equation}
%     \lVert K \rVert_{A(G)} = \sum_{\gamma \in \Gamma} \left|\widehat{K}(\gamma)\right|
% \end{equation}
% and set $A(G) = \{K \in L^1(G): \lVert K \rVert_{A(G)} < \infty\}$.

\begin{lemma}[cf. {\cite[Lemma 2]{bourgain-roth}}]
\label{lem:bourgain_lemma_2}
 Let $\phi, \psi: G \rightarrow G$ be continuous homomorphisms such that $\phi(G), \psi(G)$ have finite index in $G$. Then for $f_1, f_2, f_3 \in L^\infty(G)$ and $K \in L^1(G)$ such that $\widehat{K} \in L^1(\Gamma)$, we have
\begin{equation} \label{eq:k}
\left| \iint_{G^2} f_1(x)f_2(x+ \phi(y))f_3(x+ \psi(y)) K(y) \, d\mu(x) d\mu(y) \right| \ll \| \widehat{f_1} \|_{\infty} \|f_2 \|_2 \| f_3\|_2 \| \widehat{K} \|_1  
\end{equation}
where the implied constant depends only on the indexes of $\phi(G)$ and $\psi(G)$ in $G$.
\end{lemma}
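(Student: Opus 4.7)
The plan is to mimic Bourgain's original Fourier-analytic argument. Expand all four functions in their Fourier series, use orthogonality of characters to collapse the double integral into a sum over two free parameters in $\Gamma$, and then apply Cauchy--Schwarz together with Lemma~\ref{lem:homo-solutions} to exploit the finite-index hypotheses on $\phi(G)$ and $\psi(G)$.

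First I would write, at least formally, $f_i(z) = \sum_{\gamma_i}\widehat{f_i}(\gamma_i)\gamma_i(z)$ and $K(y)=\sum_{\gamma_K}\widehat{K}(\gamma_K)\gamma_K(y)$ (the latter converging absolutely since $\widehat{K}\in\ell^1(\Gamma)$). Substituting into the left-hand side and interchanging, the $x$-integral produces $\int_G (\gamma_1+\gamma_2+\gamma_3)(x)\,d\mu(x)=\mathbf{1}[\gamma_1+\gamma_2+\gamma_3=0]$ and the $y$-integral produces $\int_G (\gamma_2\circ\phi+\gamma_3\circ\psi+\gamma_K)(y)\,d\mu(y)=\mathbf{1}[\gamma_2\circ\phi+\gamma_3\circ\psi+\gamma_K=0]$. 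Solving for $\gamma_1$ and $\gamma_K$, the integral equals
\[
I=\sum_{\gamma_2,\gamma_3\in\Gamma}\widehat{f_1}(-\gamma_2-\gamma_3)\,\widehat{f_2}(\gamma_2)\,\widehat{f_3}(\gamma_3)\,\widehat{K}(-\gamma_2\circ\phi-\gamma_3\circ\psi).
\]
Pulling out $\|\widehat{f_1}\|_\infty$, it suffices to bound $S:=\sum_{\gamma_2,\gamma_3}|\widehat{f_2}(\gamma_2)||\widehat{f_3}(\gamma_3)||\widehat{K}(-\gamma_2\circ\phi-\gamma_3\circ\psi)|$.

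Next I would split $|\widehat{K}(\cdot)| = |\widehat{K}(\cdot)|^{1/2}\cdot|\widehat{K}(\cdot)|^{1/2}$ and apply Cauchy--Schwarz pairing one half with $|\widehat{f_2}(\gamma_2)|$ and the other with $|\widehat{f_3}(\gamma_3)|$. This yields
\[
S^2\le \Bigl(\sum_{\gamma_2,\gamma_3}|\widehat{f_2}(\gamma_2)|^2|\widehat{K}(-\gamma_2\circ\phi-\gamma_3\circ\psi)|\Bigr)\Bigl(\sum_{\gamma_2,\gamma_3}|\widehat{f_3}(\gamma_3)|^2|\widehat{K}(-\gamma_2\circ\phi-\gamma_3\circ\psi)|\Bigr).
\]
In the first factor I fix $\gamma_2$ and sum over $\gamma_3$: by Lemma~\ref{lem:homo-solutions} the map $\gamma_3\mapsto\gamma_3\circ\psi$ is at most $[G:\psi(G)]$-to-one, so the inner sum is $\le[G:\psi(G)]\,\|\widehat{K}\|_1$ uniformly in $\gamma_2$, and Plancherel collapses the remaining $\gamma_2$-sum to $\|f_2\|_2^2$. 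The second factor is handled symmetrically, producing $[G:\phi(G)]\,\|\widehat{K}\|_1\,\|f_3\|_2^2$. Multiplying out gives $|I|\le\sqrt{[G:\phi(G)][G:\psi(G)]}\,\|\widehat{f_1}\|_\infty\|f_2\|_2\|f_3\|_2\|\widehat{K}\|_1$, which is the claimed estimate.

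The only genuine subtlety is justifying the swap of the integral with the (possibly non-absolutely-convergent) Fourier series of $f_1,f_2,f_3$. I would handle this by first truncating each $f_i$ to a trigonometric polynomial $f_i^{(N)}=\sum_{\gamma\in F_N}\widehat{f_i}(\gamma)\gamma$ with $f_i^{(N)}\to f_i$ in $L^2$ (and boundedly), carrying out the above manipulation for the truncated functions where every sum is finite and every interchange is trivially legal, and then passing to the limit: the left-hand side converges by dominated convergence (using $\|f_i^{(N)}\|_\infty\lesssim 1$ is \emph{not} automatic, so instead I would use that Lemma~\ref{lem:bourgain_lemma_2} applied to $f_i-f_i^{(N)}$ tends to $0$, by the same derivation, because the bound depends only on $L^2$-norms). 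This limiting step is the main technical point; the Fourier computation itself is entirely mechanical.
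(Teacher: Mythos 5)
Your proof is correct and follows essentially the same argument as the paper's: expand all functions in Fourier series, use orthogonality to collapse the double integral to a constrained sum over $(\gamma_2,\gamma_3)\in\Gamma^2$, extract $\|\widehat{f_1}\|_\infty$, and control what remains by Cauchy--Schwarz combined with the multiplicity bound of \cref{lem:homo-solutions}. The only (cosmetic) difference is how Cauchy--Schwarz is deployed: the paper first fixes the frequency $\gamma_0$ of $K$, bounds the constrained sum over $\gamma_2,\gamma_3$ by $\ll \|f_2\|_2\|f_3\|_2$ for each such $\gamma_0$, and then sums over $\gamma_0$ to recover $\|\widehat{K}\|_1$; you split $|\widehat{K}|=|\widehat{K}|^{1/2}\cdot|\widehat{K}|^{1/2}$ and run one symmetric Cauchy--Schwarz over all of $\Gamma^2$, which produces the identical constant $\sqrt{[G:\phi(G)][G:\psi(G)]}$. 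Your closing remark on the limiting step is slightly muddled (what you actually invoke is not dominated convergence but the estimate itself applied to $f_i-f_i^{(N)}$, together with $\|\widehat{f_1}-\widehat{f_1^{(N)}}\|_\infty\to 0$ and $\|f_j-f_j^{(N)}\|_2\to 0$ for $j=2,3$); that is exactly the density reduction the paper encapsulates in its ``without loss of generality, assume $f_i$ equals its Fourier series.''
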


\begin{proof}
Since linear combinations of characters are dense in $L^1(G)$, without loss of generality, we can assume $f_1, f_2, f_3$ and $K$ are equal to their Fourier series. For $x \in G$, write $g(x) = \int_{G} f_2(x+ \phi(y) )f_3(x+ \psi(y) ) K(y) \, d\mu(y)$.

By Parseval's theorem,
\begin{equation*} %\label{eq:k2}
\left| \int_G f_1(x) g(x) \, d\mu(x) \right|= \left| \sum_{\gamma \in \Gamma} \widehat{f_1}(\gamma) \widehat{g} (\overline{\gamma}) \right| \leq \| \widehat{f_1} \|_{\infty} \cdot \| \widehat{g} \|_1.
\end{equation*} 
%For the time being, we assume that the functions $f_2, f_3, K$ are equal to the sums of their Fourier series. 
Thus
\begin{eqnarray*}
g(x) &=& \int_{G} f_2(x+ \phi(y)) f_3(x+ \psi(y)) K(y) \, d\mu(y) \nonumber \\
&=& \int_G \left( \sum_{\gamma_2, \gamma_3, \gamma_0 \in \Gamma} \widehat{f_2}(\gamma_2) \gamma_2(x+ \phi(y)) \widehat{f_3}(\gamma_3) \gamma_3(x+ \psi(y)) \widehat{K}(\gamma_0) \gamma_0(y) \right) \, d\mu(y) \nonumber \\
&=& \int_G \left( \sum_{\gamma_2, \gamma_3, \gamma_0 \in \Gamma} \widehat{f_2}(\gamma_2) \widehat{f_3}(\gamma_3) \widehat{K}(\gamma_0) (\gamma_2 + \gamma_3)(x) (\gamma_2 \circ \phi +  \gamma_3 \circ \psi + \gamma_0 ) (y)\right) \, d\mu(y) \nonumber \\
&=&  \sum_{\substack{ \gamma_2, \gamma_3, \gamma_0 \in \Gamma,\\ \gamma_2 \circ \phi + \gamma_3 \circ \psi + \gamma_0=0}} \widehat{f_2}(\gamma_2) \widehat{f_3}(\gamma_3) \widehat{K}(\gamma_0) (\gamma_2 + \gamma_3)(x) %\label{eq:k3}.
\end{eqnarray*}
Consequently,
\begin{equation*} %\label{eq:k4}
\widehat{g}(\gamma) = \sum_{\substack{ \gamma_2, \gamma_3, \gamma_0 \in \Gamma,\\ \gamma_2 \circ \phi + \gamma_3 \circ \psi + \gamma_0=0, \\ \gamma_2 + \gamma_3 = \gamma}} 
\widehat{f_2}(\gamma_2) \widehat{f_3}(\gamma_3) \widehat{K}(\gamma_0) 
\end{equation*}
and
\begin{equation*} %\label{eq:k5}
\lVert \widehat{g} \rVert_1 \leq \sum_{\substack{ \gamma_2, \gamma_3, \gamma_0 \in \Gamma,\\ \gamma_2 \circ \phi + \gamma_3 \circ \psi + \gamma_0=0}} |\widehat{f_2}(\gamma_2) | \cdot | \widehat{f_3}(\gamma_3) | \cdot 
| \widehat{K}(\gamma_0) |.
%&=& \sum_{\gamma_2, \gamma_3 \in \Gamma} |\widehat{f_2}(\gamma_2) | \cdot | \widehat{f_3}(\gamma_3) | \cdot 
%| \widehat{K}(-\gamma_2 - 2 \gamma_3) |
\end{equation*}
Therefore, it suffices to show that for each $\gamma_0 \in \Gamma$, we have
\[
\sum_{\substack{ \gamma_2, \gamma_3 \in \Gamma,\\ \gamma_2 \circ \phi + \gamma_3 \circ \psi + \gamma_0=0}} |\widehat{f_2}(\gamma_2) | \cdot | \widehat{f_3}(\gamma_3) | \ll \|f_2 \|_2 \cdot \| f_3 \|_2,
\]
where the implicit constant depends only on the indexes $[G:\phi(G)]$ and $[G:\psi(G)]$.
By the Cauchy-Schwarz inequality and the Parseval's theorem, the left hand side is at most
\begin{eqnarray}
& & \| f_2 \|_2 \cdot \left( \sum_{\gamma_2} \left( \sum_{\substack{\gamma_3 \\ \gamma_3 \circ \psi = - \gamma_0 - \gamma_2 \circ \phi}} | \widehat{f_3}(\gamma_3) | \right)^2  \right)^{1/2} \nonumber \\
& \ll & \| f_2 \|_2 \cdot \left( \sum_{\gamma_2} \sum_{\substack{\gamma_3 \\ \gamma_3 \circ \psi = - \gamma_0 - \gamma_2 \circ \phi}} | \widehat{f_3}(\gamma_3) |^2  \right)^{1/2} \label{eq:index1} \\
& \ll & \| f_2 \|_2 \cdot \left( \sum_{\gamma_3}  | \widehat{f_3}(\gamma_3) |^2  \right)^{1/2} \label{eq:index2} \\
&=& \| f_2 \|_2 \cdot \| f_3 \|_2. \nonumber
\end{eqnarray}
In \eqref{eq:index1}, we use the fact that for each $\xi \in \Gamma$, there are at most $[G: \psi(G)]$ values of $\gamma_3$ such that $\gamma_3 \circ \psi = \xi$. Likewise, in \eqref{eq:index2}, we use the fact that for each $\xi \in \Gamma$, there are at most $[G: \phi(G)]$ values of $\gamma_2$ such that $\gamma_2 \circ \phi = \xi$. Both of these facts follow from \cref{lem:homo-solutions}.
\end{proof}

\begin{remark} Lemma \ref{lem:bourgain_lemma_2} is not true without the finite index assumption.
As a counterexample, we let $\phi(x) = x, \psi(x) = 2x$ and $G=\F_2 ^k$ for some large $k$. Let $n=|G| = 2^k$ and $\{\gamma_i: i \in [n]\}$ be the set of characters of $G$. Let $a_1, \ldots, a_n$ and $b_1, \ldots, b_n$ be nonnegative real numbers. The exact values of $a_i, b_i$ will be chosen later. For each $i \in [n]$, define 
\begin{itemize}
\item $\widehat{f_1}(\gamma_i) = \widehat{f_3}(\gamma_i) =1$,
\item $\widehat{f_2}(\gamma_i) = a_i$,
\item $\widehat{K}(\gamma_i) = b_i$.
\end{itemize}
It follows that $f_1(x) = f_3(x) = n \cdot 1_{x=0}$.
Then \eqref{eq:k} says that
\[
n(a_1 b_1 + \cdots + a_n b_n)^2 \ll (a_1^2 + \cdots + a_n^2) (b_1 + \cdots + b_n)^2.
\]
This is false by taking $a_1=b_1=1$ and $a_i = b_i =0$ for $i \neq 1$.
\end{remark}

While the previous lemma involves the configuration $x, x + \phi(y), x + \psi(y)$, the next one is concerned with $x, x + \phi(y)$ and $\psi(y)$. Its proof is almost identical and so we only highlight the differences. 

\begin{lemma} \label{lem:counting2}
 Let $\phi, \psi: G \rightarrow G$ be continuous homomorphisms such that $\phi(G), \psi(G)$ have finite index in $G$. Then for $f_1, f_2, f_3 \in L^\infty(G)$, we have
\begin{equation} 
\left| \iint_{G^2} f_1(x) f_2(x+ \phi(y))f_3(\psi(y)) \, d\mu(x) d\mu(y) \right| \ll \| \widehat{f_1} \|_{\infty} \|f_2 \|_2 \| f_3\|_2 
\end{equation}
where the implicit constant depends only on the indexes of $\phi(G)$ and $\psi(G)$ in $G$.
\end{lemma}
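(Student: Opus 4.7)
The plan is to mimic the Fourier-analytic argument for Lemma \ref{lem:bourgain_lemma_2} essentially verbatim, with two small bookkeeping modifications: there is no kernel $K$ to carry around (so the frequency $\gamma_0$ just disappears), and $f_3$ is evaluated at $\psi(y)$ rather than $x + \psi(y)$ (so $\widehat{f_3}(\gamma_3)$ contributes a character $\gamma_3\circ\psi$ in $y$ but no character in $x$). First I would reduce to the case where the $f_i$ equal their Fourier series by density, set $g(x) := \int_G f_2(x+\phi(y)) f_3(\psi(y))\, d\mu(y)$, and apply Parseval to get
\[
\Bigl| \int_G f_1(x)\, g(x)\, d\mu(x) \Bigr| \leq \|\widehat{f_1}\|_\infty \cdot \|\widehat{g}\|_1.
\]
So the task reduces to bounding $\|\widehat{g}\|_1$.

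Next I would expand $f_2$ and $f_3$ into Fourier series inside the definition of $g$ and use the orthogonality relation $\int_G \chi(y)\, d\mu(y) = \mathbf{1}_{\chi = 0}$ on $\Gamma$ to collapse the $y$-integral to the diagonal $\gamma_2 \circ \phi + \gamma_3 \circ \psi = 0$. Because $f_3$ has no $x$-dependence, only $\gamma_2(x)$ survives, which gives the identity
\[
\widehat{g}(\gamma) \;=\; \widehat{f_2}(\gamma) \sum_{\gamma_3\, : \, \gamma_3 \circ \psi = -\gamma \circ \phi} \widehat{f_3}(\gamma_3).
\]
This is the analog of the formula for $\widehat{g}$ in the proof of Lemma \ref{lem:bourgain_lemma_2}, with the $\gamma_0$-sum absent and with the constraint $\gamma_2 = \gamma$ forced (rather than $\gamma_2 + \gamma_3 = \gamma$).

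Finally I would estimate $\|\widehat{g}\|_1$ by a two-step Cauchy--Schwarz: first pull out $|\widehat{f_2}(\gamma)|$ and apply Cauchy--Schwarz in $\gamma$ to get a factor of $\|f_2\|_2$ from Plancherel, and then handle the inner sum over $\gamma_3$. The finite-index hypotheses enter exactly as in Lemma \ref{lem:bourgain_lemma_2}: by Lemma \ref{lem:homo-solutions}, for each fixed $\gamma$ there are at most $[G:\psi(G)]$ values of $\gamma_3$ with $\gamma_3 \circ \psi = -\gamma \circ \phi$, and for each fixed $\gamma_3$ there are at most $[G:\phi(G)]$ values of $\gamma$ with $\gamma \circ \phi = -\gamma_3 \circ \psi$. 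A second application of Cauchy--Schwarz, followed by switching the order of summation and invoking Plancherel for $f_3$, yields
\[
\|\widehat{g}\|_1^2 \;\leq\; [G:\psi(G)]\,[G:\phi(G)] \cdot \|f_2\|_2^2 \,\|f_3\|_2^2,
\]
which combined with the Parseval bound above proves the claim. There is no genuine obstacle here; the only care needed is to verify that the index-counting argument still goes through cleanly in the absence of the kernel $K$, which it does because the previous proof never actually used a nontrivial feature of $K$ beyond $\|\widehat{K}\|_1 < \infty$.
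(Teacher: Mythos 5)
Your proposal is correct and follows essentially the same route as the paper's own proof: reduce to Fourier series, define $g(x) = \int_G f_2(x+\phi(y)) f_3(\psi(y))\,d\mu(y)$, apply Parseval to pull out $\|\widehat{f_1}\|_\infty$, derive the formula $\widehat{g}(\gamma) = \widehat{f_2}(\gamma)\sum_{\gamma_3\circ\psi = -\gamma\circ\phi}\widehat{f_3}(\gamma_3)$, and run a two-step Cauchy--Schwarz using \cref{lem:homo-solutions} to control the fibers of $\gamma\mapsto\gamma\circ\phi$ and $\gamma_3\mapsto\gamma_3\circ\psi$. Your observation that the kernel $K$ played no role beyond $\|\widehat{K}\|_1 < \infty$ is also consistent with how the paper presents this as an ``almost identical'' variant of \cref{lem:bourgain_lemma_2}.
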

\begin{proof}
%\Hnote{Similar (and simpler) than the previous lemma, using Cauchy-Schwarz and Plancherel, etc.}

%The proof of this lemma is similar to the one of \cref{lem:bourgain_lemma_2} and so we just emphasize the places that they are different. 
Similar to the proof of \cref{lem:bourgain_lemma_2}, without loss of generality, we can assume $f_1, f_2, f_3$ are equal to their Fourier series. For $x \in G$, write $g(x) = \int_{G} f_2(x+ \phi(y) )f_3(\psi(y) ) \, d\mu(y)$ and then by Parseval's theorem,
\begin{equation*} %\label{eq:k2}
\left| \int_G f_1(x) g(x) \, d \mu(x) \right|= \left| \sum_{\gamma \in \Gamma} \widehat{f_1}(\gamma) \widehat{g} (\overline{\gamma}) \right| \leq \| \widehat{f_1} \|_{\infty} \cdot \| \widehat{g} \|_{1}.
\end{equation*} 
Moreover, we also have
\begin{eqnarray*}
g(x) %&=& \int_{G} f_2(x+ \phi(y)) f_3(\psi(y)) \, d\mu(y) \nonumber \\
&=& \int_G \left( \sum_{\gamma_2, \gamma_3 \in \Gamma} \widehat{f_2}(\gamma_2) \gamma_2(x+ \phi(y)) \widehat{f_3}(\gamma_3) \gamma_3(\psi(y)) \right) \, d\mu(y) \nonumber \\
%&=& \int_G \left( \sum_{\gamma_2, \gamma_3 \in \Gamma} \widehat{f_2}(\gamma_2) \widehat{f_3}(\gamma_3) \gamma_2(x) (\gamma_2 \circ \phi +  \gamma_3 \circ \psi)(y)\right) \, d\mu(y) \nonumber \\
&=&  \sum_{\substack{ \gamma_2, \gamma_3 \in \Gamma,\\ \gamma_2 \circ \phi + \gamma_3 \circ \psi = 0}} \widehat{f_2}(\gamma_2) \widehat{f_3}(\gamma_3) \gamma_2(x). %\label{eq:k3}
\end{eqnarray*}
As a consequence,
\begin{equation*} %\label{eq:k4}
\widehat{g}(\gamma) = \sum_{\substack{ \gamma_2, \gamma_3 \in \Gamma,\\ \gamma_2 \circ \phi + \gamma_3 \circ \psi = 0, \\ \gamma_2 = \gamma}} 
\widehat{f_2}(\gamma_2) \widehat{f_3}(\gamma_3) =
\widehat{f_2}(\gamma) \sum_{\substack{\gamma_3 \in \Gamma, \\ \gamma \circ \phi + \gamma_3 \circ \psi = 0}}  \widehat{f_3}(\gamma_3)
\end{equation*}
and so
\begin{equation*} %\label{eq:k5}
\| \widehat{g} \|_{1} \leq \sum_{\substack{\gamma_2, \gamma_3 \in \Gamma, \\ \gamma_2 \circ \phi + \gamma_3 \circ \psi = 0}} \left| \widehat{f_2}(\gamma_2) \right| \left| \widehat{f_3}(\gamma_3) \right|.
%&=& \sum_{\gamma_2, \gamma_3 \in \Gamma} |\widehat{f_2}(\gamma_2) | \cdot | \widehat{f_3}(\gamma_3) | \cdot 
%| \widehat{K}(-\gamma_2 - 2 \gamma_3) |
\end{equation*}
% Therefore, it suffices to show that for each $\gamma_0 \in \Gamma$, we have
% \[
% \sum_{\substack{ \gamma_2, \gamma_3 \in \Gamma,\\ \gamma_2 \circ \phi + \gamma_3 \circ \psi + \gamma_0=0}} |\widehat{f_2}(\gamma_2) | \cdot | \widehat{f_3}(\gamma_3) | \ll \|f_2 \|_2 \cdot \| f_3 \|_2.
% \]
On the other hand, we have
\begin{eqnarray}
\sum_{\substack{\gamma_2, \gamma_3 \in \Gamma, \\ \gamma_2 \circ \phi + \gamma_3 \circ \psi = 0}} \left| \widehat{f_2}(\gamma_2) \right| \left| \widehat{f_3}(\gamma_3) \right| &=& \sum_{\gamma_2 \in \Gamma} \left( \left| \widehat{f_2}(\gamma_2) \right| \sum_{\gamma_3 \in \Gamma, \atop{\gamma_3 \circ \psi = - \gamma_2 \circ \phi}} \left| \widehat{f_3}(\gamma_3) \right|\right) \nonumber\\
&\leq& \left( \sum_{\gamma_2 \in \Gamma} \left| \widehat{f_2}(\gamma_2) \right|^2 \right)^{1/2} \left( \sum_{\gamma_2 \in \Gamma} \left( \sum_{\gamma_3 \in \Gamma, \atop{\gamma_3 \circ \psi = - \gamma_2 \circ \phi}} \left| \widehat{f_3}(\gamma_3) \right| \right)^2 \right)^{1/2} \nonumber \\
&\ll& \lVert f_2 \rVert_2 \left( \sum_{\gamma_2 \in \Gamma} \sum_{\gamma_3 \in \Gamma,\atop{\gamma_3 \circ \psi = - \gamma_2 \circ \phi}} \left|  \widehat{f_3}(\gamma_3) \right|^2 \right)^{1/2} \label{eq:new_index1} \\
&\ll& \lVert f_2 \rVert_2 \left( \sum_{\gamma_3 \in \Gamma} \left| \widehat{f_3}(\gamma_3) \right|^2 \right)^{1/2} \label{eq:new_index2} \\
&=& \lVert f_2 \rVert_2 \lVert f_3 \rVert_2. \nonumber
\end{eqnarray}
In \eqref{eq:new_index1}, we use the fact that for each $\xi \in \Gamma$, there are $\leq [G: \psi(G)]$ values of $\gamma_3$ such that $\gamma_3 \circ \psi = \xi$ while \eqref{eq:new_index2} follows from the fact that there are $\leq [G: \phi(G)]$ values of $\gamma_2$ such that $\gamma_2 \circ \phi = \xi$.
\end{proof}
%\subsection{The Hales-Jewett theorem}

\section{Bohr sets and partitions}
\label{sec:partition}

\subsection{Monochromatic configurations}
We make some preparations before the proof of \cref{th:counting-partition}. In this section, we only need $G$ to be a commutative semigroup with neutral element. Fix $k+1$ commuting (semigroup)  homomorphisms $\psi, \phi_1, \ldots, \phi_k: G \rightarrow G$ with $\psi \neq 0$. 
We write 
\[
\Phi_m = \{ \psi^{i_0} \circ \phi_1^{i_1} \circ \cdots \circ \phi_k^{i_k} : 0 \leq i_0, i_1, \ldots, i_k \leq m \} \cup \{ 0 \}
\] (where $\phi^i$ is the $i$-th composition of $\phi$).

For \textit{formal variables} $x_1, \ldots, x_n$, we write
\[
S_m(x_1,\ldots, x_n) = \left\{ \sum_{i=1}^n \xi_i (x_i) : \xi_i \in \Phi_m \right\} 
\]
and we refer to $S_m(x_1,\ldots, x_n)$ as the $S_{m,n}$-\textit{set} with \textit{generators} $x_1, \ldots, x_n$. There is a canonical bijection between $S_m(x_1, \ldots, x_n)$ and $\Phi_m^n$ defined by
\[
    \sum_{i=1}^n \xi_i(x_i) \mapsto (\xi_1, \ldots, \xi_n).
\]
 For an element $x = \sum_{i=1}^n \xi_i (x_i) \in S_m(x_1,\ldots, x_n)$, by the \textit{support} of $x$ we mean the set $\{ i \in [n]: \xi_i \neq 0\}$.
The goal of this section is to prove the following:

\begin{theorem} \label{th:mpc-hom} 
For any $r > 0$, there exist $n$ and $m$ such that under any $r$-coloring of $S_m(x_1,\ldots, x_n)$, there is a monochromatic configuration
\[
\{ \psi(y), x, x + \phi_1(y), \ldots, x+ \phi_k(y) \},
\]
where $x, y$ have nonempty and disjoint supports.
%\Hnote{This will help us when we integrate and change variables}.
\end{theorem}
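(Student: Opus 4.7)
The plan is to deduce the statement from the Hales--Jewett theorem, by encoding the configuration in an alphabet whose combinatorial lines realize the desired monochromatic patterns. To ensure the disjoint-support requirement automatically, I would split the generators into two families $u_1, \ldots, u_n$ and $v_1, \ldots, v_n$, using the $u_i$'s to build $x$ and the $v_i$'s to build $y$. I would then introduce the alphabet $A = \{a_0, a_1, \ldots, a_{k+1}\}$ of size $k+2$, where at position $i$ the symbols contribute
\[
a_0 \mapsto \psi(v_i), \qquad a_1 \mapsto u_i, \qquad a_{l+1} \mapsto u_i + \phi_l(v_i)
\]
to the total sum. This defines an embedding $\Phi : A^n \to S_m$. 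Taking $n \approx \mathrm{HJ}(r, k+2)$ with $m$ large enough that all compositions involved lie in $\Phi_m$, the Hales--Jewett theorem applied to the pulled-back $r$-coloring yields a monochromatic combinatorial line whose elements take the form $X_F + \psi(V_M)$, $X_F + U_M$, and $X_F + U_M + \phi_l(V_M)$, where $U_M, V_M$ are the moving sums over the moving set $M$ and $X_F$ is the fixed contribution.

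Attempting the natural identification $y = V_M$, $x = X_F + U_M$ matches $x$ and each $x + \phi_l(y)$ with a line element, but the first line element becomes $X_F + \psi(y)$ rather than $\psi(y)$ itself. The hard part is precisely this \emph{constant obstruction}: Hales--Jewett gives no control over the fixed part $X_F$, and an unwanted summand clings to the $\psi(y)$-term.

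To dispatch this, I would iterate the argument. First apply Hales--Jewett on the $v$-coordinates under the auxiliary coloring $y \mapsto c(\psi(y))$ to find a large family of $y$'s on which $c(\psi(y))$ is constant at some color $c_0$; then, restricted to this family, apply Hales--Jewett on the $u$-coordinates under a joint coloring tracking $c(x)$ and $c(x + \phi_l(y))$ for representative $y$'s from the first stage, together with a pigeonhole step that eventually forces the three types of color values to coincide at $c_0$. Commutativity of $\psi$ with the $\phi_l$'s (hypothesis (a)) is essential here: it allows the successive Hales--Jewett applications to interact coherently with the operators, so that the common color emerging from the inner stage remains compatible with the constraint fixed by the outer stage. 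This nested use of Hales--Jewett is also the source of the tower-type dependence of $n$ and $m$ on $r$ and $k$ flagged in the paper's concluding remarks.
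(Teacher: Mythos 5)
Your diagnosis of the obstruction is exactly right: a single Hales--Jewett application on the alphabet $\{a_0,\dots,a_{k+1}\}$ leaves the uncontrolled fixed part $X_F$ attached to the $\psi(y)$-term, and this is the real difficulty. However, the proposed repair---``apply Hales--Jewett on the $v$-coordinates to pin down $c(\psi(y))=c_0$, then apply it on the $u$-coordinates with a joint coloring, then pigeonhole''---does not close the gap as stated. The second stage has no mechanism to force the colors it produces to \emph{equal} $c_0$: Hales--Jewett on the $u$'s under a coloring tracking $\bigl(c(x),c(x+\phi_1(y)),\dots\bigr)$ gives you some constant tuple of colors, but nothing ties that tuple back to $c_0$, and a single pigeonhole at the end cannot reconcile them. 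The missing idea is a \emph{color-focusing} induction of length $r+1$: you need to build elements $y_1,\dots,y_{r+1}$ with pairwise disjoint supports so that for each $i$, \emph{all} expressions of the form $\psi(y_i) + \sum_{j<i}\xi_j(y_j)$ (with $\xi_j \in \{0,\psi,\phi_1,\dots,\phi_k\}$) share a color $c(i)$, and only then pigeonhole over $c(1),\dots,c(r+1)$ to find $u<v$ with $c(u)=c(v)$, yielding the configuration with $x=\psi(y_v)$, $y=y_u$. Each round of this induction is a \emph{multidimensional} Hales--Jewett application (you need a combinatorial space, not a line, to pass to the next round), with the alphabet being the whole composition semigroup $\Phi_{m'+1}$ rather than your $k{+}2$ symbols---your smaller alphabet doesn't stay closed under the inductive re-encoding, so the iteration can't be carried out within it.

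This focusing induction is the actual content of the paper's Proposition~3.3, and it is why the bounds on $n$ and $m$ are so large (tower-type, as you correctly anticipated). Your remark about commutativity is on the right track but the reason is slightly different: commutativity of $\psi,\phi_1,\dots,\phi_k$ is what makes $\Phi_m$ a set of bounded size, so that the alphabet at each inductive stage is finite and the Hales--Jewett numbers are well-defined; it is not so much a matter of the stages ``interacting coherently'' as of the bookkeeping being finite at all. With these two ingredients---the $(r+1)$-step color-focusing chain, and the closure of the alphabet under composition---your outline can be turned into the paper's proof, but as written the final ``pigeonhole step'' is carrying the entire weight of the argument and is unjustified.
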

The fact that the supports of $x$ and $y$ are nonempty and disjoint will be crucial in our applications (\cref{th:counting-partition} and \cref{prop:counting-brauer}). \cref{th:mpc-hom} follows from \cref{prop:mpc} below whose proof requires the multidimensional Hales-Jewett theorem (for a reference, see \cite[Theorem 7, p.40]{ramsey-book}). We recall the theorem here for reader's convenience.

%Before stating the theorem, we need to \cref{th:mpc-hom} follows from \cref{prop:mpc}. The main ingredient in the proof of \ref{prop:mpc} multidimensional Hales-Jewett theorem, which we now recall (see e.g. \cite[Theorem 7, p.40]{ramsey-book}). 

The set $[t]^N = \{(x_1, \ldots, x_N): x_i \in [t] \}$ is called a \textit{cube} of dimension $N$ over $t$ elements. Let $[N] = A_0 \cup A_1 \cup \cdots \cup A_m$ be any disjoint partition of $[N]$, where $A_i \neq \varnothing$ for $i \neq 0$ ($A_0$ may be empty), and $f: A_0 \rightarrow [t]$ be any map. Define a map $g: [t]^m \rightarrow [t]^N$ by assigning to each $(y_1, \ldots, y_m) \in [t]^m$ the element $(x_1, \ldots, x_N) \in [t]^N$, where
\begin{equation}
x_i = 
\begin{cases}
f(i), \quad &\textup{if}\ i \in A_0 \\
y_j, \quad &\textup{if}\ i \in A_j \text{ for }j \in [m].
\end{cases}
\end{equation}
A \textit{combinatorial space} of dimension $m$ is the image of $g$ for some choice of $A_0, A_1, \ldots, A_m$ and $f$. We can now state:

\begin{theorem}[Multidimensional Hales-Jewett] For any $r, t, m$, there exists a number $N=HJ(t, m;r)$ such that whenever $[t]^N$ is $r$-colored, there must be a monochromatic combinatorial space of dimension $m$.
\end{theorem}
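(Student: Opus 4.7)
The plan is to prove the multidimensional Hales-Jewett theorem by induction on $m$, reducing to the classical one-dimensional Hales-Jewett theorem (the case $m=1$), which we take as given. Recall that 1D HJ asserts the existence, for every $t, r$, of some $N$ such that every $r$-coloring of $[t]^N$ contains a monochromatic combinatorial line. The inductive step uses a product-coloring trick: we split the cube into two factors, find a suitable ``line direction'' in the second factor via 1D HJ applied with a blown-up number of colors, and then apply the inductive hypothesis within a single slice of the first factor.

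More precisely, suppose the result holds for $m-1$, and set $N_1 := HJ(t, m-1; r)$ and $N_2 := HJ(t, 1; r^{t^{N_1}})$; I claim that $N := N_1 + N_2$ suffices. Given an $r$-coloring $c : [t]^{N_1} \times [t]^{N_2} \to [r]$, define an induced coloring $c'$ of $[t]^{N_2}$ in $r^{t^{N_1}}$ colors by sending $y$ to the ``slice function'' $c(\cdot, y) : [t]^{N_1} \to [r]$; two points $y, y' \in [t]^{N_2}$ receive the same $c'$-color if and only if $c(x,y) = c(x,y')$ for every $x \in [t]^{N_1}$. By the choice of $N_2$ and 1D HJ, there is a $c'$-monochromatic combinatorial line $L \subseteq [t]^{N_2}$, so the value $c(x,y)$ is independent of $y \in L$. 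Fix any $y_0 \in L$ and apply the inductive hypothesis to the $r$-coloring $\tilde c(x) := c(x, y_0)$ on $[t]^{N_1}$ to extract a monochromatic combinatorial $(m-1)$-space $S \subseteq [t]^{N_1}$.

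To finish, the product $S \times L$ is the point set of a combinatorial $m$-space in $[t]^N$: concatenating the partitions and fixing maps that define $S$ and $L$ (treating the coordinate sets $[N_1]$ and $[N_2]$ as disjoint within $[N]$) produces a partition $[N] = C_0 \cup C_1 \cup \cdots \cup C_m$ with $C_i$ nonempty for $i \geq 1$, together with a fixing function on $C_0$, whose associated subspace is precisely $S \times L$. Monochromaticity is immediate: for any $(x,y) \in S \times L$, one has $c(x,y) = c(x, y_0) = \tilde c(x)$, which is the constant color of $S$ under $\tilde c$. The main obstacle is not conceptual but notational --- carefully matching the abstract definition of ``combinatorial space'' with the product construction --- while the quantitative price is severe: the bound on $HJ(t,m;r)$ produced this way is a tower of height $m$, which is the ultimate source of the tower-type dependencies noted after \cref{th:counting-partition}.
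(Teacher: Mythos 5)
Your proof is correct. Note that the paper does not prove this statement at all---it is quoted as a known black box from Graham--Rothschild--Spencer \cite{ramsey-book}---so there is no in-paper argument to compare against; your product-coloring induction on $m$, reducing to the one-dimensional Hales--Jewett theorem via the slice-function coloring with $r^{t^{N_1}}$ colors, is exactly the standard derivation given in that reference, and the verification that $S \times L$ is a combinatorial $m$-space (concatenating the two partitions, with the wildcard classes of $S$ giving $A_1,\dots,A_{m-1}$ and that of $L$ giving $A_m$) is carried out correctly.
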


Using this, we can prove the following proposition. (Recall that $k$ is fixed from the beginning of this section.)

\begin{proposition} \label{prop:mpc}
 For any $r>0$ and $\ell >0$, there exist $n=n(k,\ell, r)$ and $m=m(k,\ell, r)$ such that under any $r$-coloring of $S_{m}(x_1,\ldots, x_n)$, there are elements 
$y_1, \ldots, y_\ell \in S_m(x_1,\ldots, x_n)$ with nonempty and disjoint supports, such that for each $i \in [\ell]$, the elements
\[
\psi(y_i) + \sum_{1 \leq j \leq i-1} \xi_j (y_j) \qquad \textup{ where } \xi_j \in \{ 0, \psi, \phi_1, \ldots, \phi_k \}
\]
have the same color (i.e. their color depends only on $i$).
\end{proposition}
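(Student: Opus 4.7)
The plan is to prove Proposition~\ref{prop:mpc} by induction on $\ell$, using the multidimensional Hales-Jewett theorem as the central Ramsey-theoretic tool. For the base case $\ell = 1$, any element $y_1 = \alpha(x_1)$ with $\alpha \in \Phi_m \setminus \{0\}$ works, since the condition for $i = 1$ involves only the single element $\psi(y_1)$ and is therefore vacuous.

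For the inductive step, set $\Xi_0 := \{0, \psi, \phi_1, \ldots, \phi_k\}$ and let $n', m'$ be the parameters from the induction hypothesis at $\ell - 1$ with $r$ colors. Take $m = m' + 1$ (so that $\xi(\alpha) \in \Phi_m$ for $\alpha \in \Phi_{m-1}$ and $\xi \in \Xi_0$), identify $S_m(x_1, \ldots, x_n)$ with $\Phi_m^n$ via the formal sum bijection $(\alpha_a)_a \leftrightarrow \sum_a \alpha_a(x_a)$, and split the coordinates into a ``head'' block of $N$ and a ``tail'' block of $n'$ (with $N$ a multidimensional Hales-Jewett number to be specified). For each head-supported $y_1$, the ``shift profile'' $C(y_1) := (c(z + \xi(y_1)))_{\xi \in \Xi_0,\, z \in Z}$ on a finite probing set $Z$ of tail elements (chosen in advance to contain all elements arising from the application of the induction hypothesis below) defines a coloring of head candidates with a finite palette. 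Applying multidimensional Hales-Jewett to $C$, I extract a monochromatic combinatorial subspace $W$ of the head containing $0$, so that every $y_1 \in W$ satisfies $c(z + \xi(y_1)) = c(z + \xi(0)) = c(z)$ for all $\xi \in \Xi_0$ and all $z \in Z$. Picking any nonzero $y_1 \in W$, I then apply the induction hypothesis to the $r$-coloring $c$ restricted to tail-supported elements to obtain $y_2, \ldots, y_\ell$ with disjoint nonempty tail supports such that $c(\psi(y_i) + \sum_{2 \leq j < i} \xi_j(y_j))$ depends only on $i$. Combined with the shift-invariance of $y_1$, this yields $c(\psi(y_i) + \sum_{1 \leq j < i} \xi_j(y_j)) = c(\psi(y_i) + \sum_{2 \leq j < i} \xi_j(y_j))$, which depends only on $i$, completing the induction.

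The main obstacle is ensuring that the monochromatic Hales-Jewett subspace $W$ contains $0$, equivalently that its fixed block $A_0$ satisfies $f \equiv 0$ (without this, the shift-invariance fails because $\xi = 0 \in \Xi_0$ forces the vanishing contribution to match any fixed $A_0$-values). Standard multidimensional Hales-Jewett does not guarantee $f \equiv 0$, so I plan to handle this by applying Hales-Jewett with dimension much larger than strictly required and then extracting a combinatorial sub-subspace in which the fixed-block values collapse to $0$ --- for instance via a secondary pigeonhole or Hales-Jewett application on the parametrizing cube $\Phi_m^D$, exploiting that $0 \in \Phi_m$. Once this refinement is carried out, the rest of the argument proceeds as sketched, with careful bookkeeping of $m$ keeping all intermediate compositions within $\Phi_m$ and the disjoint nonempty support condition preserved throughout.
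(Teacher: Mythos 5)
Your plan stalls at exactly the step you flag: forcing the Hales--Jewett monochromatic combinatorial space $W$ to contain $0$. This is not a cosmetic difficulty. Passing to a combinatorial sub-subspace only fixes additional coordinates --- it cannot reset the fixed-block values $f|_{A_0}$ to $0$ --- so no secondary Hales--Jewett or pigeonhole application on the parametrizing cube $\Phi_m^D$ can deliver what you need. Nor can you translate $y_1$ by the fixed-block element $w_0 = \sum_{i \in A_0} f_i(x_i)$: the ambient $G$ is only a commutative semigroup, $\Phi_m$ is not closed under negation, and the $S_{m,n}$-set consists of formal nonnegative $\Phi_m$-combinations, so subtraction is simply unavailable. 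Without $0 \in W$, monochromaticity of your profile coloring $C$ on $W$ only gives $c(z + \xi(y_1)) = c(z + \xi(y_1'))$ for $y_1, y_1' \in W$, not the shift-invariance $c(z + \xi(y_1)) = c(z)$. Replacing $c$ on the tail by the enlarged coloring $\tilde c(z) := \bigl(c(z + \xi(y_1^*))\bigr)_{\xi \in \Xi_0}$ and applying the inductive hypothesis still leaves $c\bigl(\psi(y_i) + \sum_{2 \leq j < i}\xi_j(y_j) + \xi_1(y_1)\bigr)$ depending on $\xi_1$ as well as on $i$, which is precisely what the proposition forbids.

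The paper's proof flips the peeling order, and that is what dissolves the obstruction. One reserves a fresh generator $x_n$ and applies multidimensional Hales--Jewett to the coloring of $\Phi_{m'+1}^N$ that assigns to $(a_1, \ldots, a_N)$ the color of $\psi(x_n) + \sum_{i=1}^N \psi\bigl(a_i(x_i)\bigr)$. The monochromatic combinatorial space of dimension $n'$ then yields $z_j := \sum_{i \in A_j} \psi(x_i)$ for $j \in [n']$ together with the base point $z_{n'+1} := x_n + \sum_{i \in A_0} f_i(x_i)$, which absorbs the fixed block. The point is that $z_{n'+1}$ is the element found \emph{last}: it plays the role of $y_{\ell+1}$, appearing only in the outer position $\psi(y_{\ell+1})$ at level $i = \ell+1$, so the fixed block and the extra generator $x_n$ are harmless; commutativity of $\psi$ with the other homomorphisms then rewrites $\psi(z_{n'+1}) + \sum_j \zeta_j(z_j)$ in the form $\psi(x_n) + \sum_i \psi(a_i(x_i))$ with $a_i \in \Phi_{m'+1}$, so these all share one color. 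The inductive hypothesis is applied to the fresh $S_{m',n'}$-set generated by $z_1, \ldots, z_{n'}$ to produce $y_1, \ldots, y_\ell$. You should restructure your induction in this direction, extracting the last element from the Hales--Jewett step rather than the first.
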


\begin{proof}
The number of colors $r$ will be fixed throughout. We will proceed by induction on $\ell$. When $\ell=1$ the statement is obvious. Suppose the statement is true for $\ell$, we will prove it is true for $\ell+1$. 

Write $n' = n(k,\ell, r), m'=m(k,\ell, r)$. We define $m = m(k,\ell + 1, r):=|\Phi_{m'+1}| +1$, $N:=HJ( |\Phi_{m'+1}|, n';r)$ and $n = n(k, \ell+1,r): = 1 + N$.

Consider an arbitrary $r$-coloring of $S_{m}(x_1,\ldots, x_n)$. An $r$-coloring of $S_{m}(x_1,\ldots, x_n)$ induces an $r$-coloring of $\Phi_{m'+1}^{N}$ by assigning to 
$(a_1, \ldots, a_{N}) \in (\Phi_{m'+1})^{N}$ the color of 
\[
\psi (x_n) + \sum_{i=1}^{N} \psi \circ a_i (x_i).
\]
Since $N=HJ( |\Phi_{m'+1}|, n';r)$, there are a disjoint partition
\[
[N] = A_0 \cup A_1 \cup \cdots \cup A_{n'}, \quad A_i \neq \varnothing, \forall i \neq 0
\]
and functions $f_i \in \Phi_{m' + 1}$ for $i \in A_0$ such that when $\zeta_1, \zeta_2, \ldots, \zeta_{n'}$ range over $\Phi_{m'+1}$, all the elements 
\[
\psi(x_n) + \sum_{i=1}^{N} \psi \circ a_i (x_i),
\] 
with
\begin{equation*}
a_i = 
\begin{cases}
f_i, \quad &\textup{if}\ i \in A_0 \\
\zeta_j, \quad &\textup{if}\ i \in A_j \text{ for } 1 \leq j \leq n'
\end{cases}
\end{equation*}
have the same color.

Write $z_j = \sum_{i \in A_j} \psi(x_i)$ for $1 \leq j \leq n'$, and $z_{n'+1} = x_{n} + \sum_{i \in A_0} f_i(x_i)$. Then all the $z_j$ have nonempty and disjoint supports, and all elements of the form
\[
\psi(z_{n'+1}) + \sum_{j=1}^{n'} \zeta_j (z_j), \qquad \zeta_j \in \Phi_{m'+1},
\]
have the same color.

%Clearly,
%\[
%S_{k+1}(x_1,\ldots, x_n) \supset S_k (z_1, \ldots, z_n').
%\]
By the inductive hypothesis, there exists a sequence $y_1, \ldots, y_\ell \in S_{m'} (z_1, \ldots, z_{n'})$ having nonempty and disjoint supports such that for each $i=1, \ldots, \ell$, the elements
\[
\psi(y_i) + \sum_{1 \leq j \leq i-1} \xi_j (y_j) \qquad \textup{ where } \xi_j \in \{ 0, \psi, \phi_1, \ldots, \phi_k \}
\]
have the same color. We now set $y_{\ell+1} = z_{n'+1}$. Clearly the elements 
\[
\psi(y_{\ell+1}) + \sum_{1 \leq j \leq \ell} \xi_j (y_j) \qquad \textup{ where } \xi_j \in \{ 0, \psi, \phi_1, \ldots, \phi_k \}
\]
are of the form
\[
\psi(z_{n'+1}) + \sum_{j=1}^{n'} \zeta_j (z_j), \qquad \zeta_j \in \Phi_{m'+1},
\]
and so they have the same color. Thus  Proposition \ref{prop:mpc} is proved.
\end{proof}

\begin{proof}[Proof of \cref{th:mpc-hom}]
Applying Proposition \ref{prop:mpc} with $\ell=r+1$, we can find a sequence $y_1, \ldots, y_{r+1}$ satisfying the conclusion of that proposition. Let $c(i)$ be the color of
\[
\psi(y_i) + \sum_{1 \leq j \leq i-1} \xi_j (y_j) \qquad \textup{ where } \xi_j \in \{ 0, \psi, \phi_1, \ldots, \phi_k \}.
\]
Then there exist $1 \leq u < v \leq r+1$ such that $c(u) = c(v)$. Hence the elements
\[
\psi(y_u), \psi(y_v), \psi(y_v) + \phi_1(y_u), \ldots, \psi(y_v) + \phi_k(y_u), 
\]
have the same color, and we are done (with $x=\psi(y_v), y=y_u$).
\end{proof}

\subsection{Proofs of \texorpdfstring{\cref{th:counting-partition}}{Theorem 1.11} and \texorpdfstring{\cref{th:main-partition}}{Theorem 1.5}}

Using Theorem \ref{th:mpc-hom} we can now prove \cref{th:counting-partition}, which we recall for convenience:

\begin{theorem*} 
Suppose $\psi, \phi_1, \ldots, \phi_k: G \to G$ are continuous homomorphisms satisfying:
\begin{enumerate}
\item $\psi, \phi_1, \ldots, \phi_k$ are commuting, and

\item $\psi(G), \phi_1(G), \ldots, \phi_k(G)$ have finite index in $G$.
\end{enumerate}
Suppose $G = \bigcup_{i=1}^r A_i$ is a partition of $G$ into measurable sets. Then 
\begin{equation*} 
\sum_{i=1}^r \iint_{G^2} 1_{A_i}(\psi(t)) 1_{A_i}(x) 1_{A_i}(x + \phi_1(t)) \cdots 1_{A_i}(x + \phi_k(t)) \, d \mu(x) d\mu(t) \geq c_3
\end{equation*} 
for some positive constant $c_3$ depending only on $r, k$ and the aforementioned indexes.
\end{theorem*}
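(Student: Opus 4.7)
The plan is to derive \cref{th:counting-partition} from the combinatorial Ramsey statement \cref{th:mpc-hom} by averaging over the formal generators. First I would apply \cref{th:mpc-hom} to fix integers $n = n(r,k)$ and $m = m(r,k)$ so that every $r$-coloring of the formal set $S_m(x_1,\ldots,x_n)$ admits a monochromatic configuration $\{\psi(y), x, x+\phi_1(y), \ldots, x+\phi_k(y)\}$ with $x, y$ of nonempty disjoint supports. Let $\mathcal{P}$ denote the finite collection of pairs $(X,Y)$ in $S_m(x_1,\ldots,x_n)^2$ having nonempty disjoint supports; its cardinality is bounded in terms of $r$ and $k$ alone.

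For any $\vec x = (x_1,\ldots,x_n) \in G^n$, the partition $G = \bigsqcup_{i=1}^r A_i$ pulls back to an $r$-coloring of $S_m(\vec x) \subset G$ by evaluation, so \cref{th:mpc-hom} gives the pointwise bound
\[
\sum_{i=1}^r \sum_{(X,Y) \in \mathcal{P}} \mathbf{1}_{A_i}(\psi(Y(\vec x))) \mathbf{1}_{A_i}(X(\vec x)) \prod_{j=1}^k \mathbf{1}_{A_i}(X(\vec x) + \phi_j(Y(\vec x))) \geq 1.
\]
Integrating against $d\mu^n(\vec x)$ and pigeonholing yields a single pair $(X^*,Y^*) \in \mathcal{P}$ and color $i^* \in [r]$ whose corresponding summand has integral at least $1/(r|\mathcal{P}|)$. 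Writing $I = \operatorname{supp}(X^*)$ and $J = \operatorname{supp}(Y^*)$, the disjointness $I \cap J = \varnothing$ lets me view $\Xi_{X^*} : G^{|I|} \to G$ and $\Xi_{Y^*} : G^{|J|} \to G$ as continuous homomorphisms of independent variable blocks, and the integrand depends only on the coordinates in $I \cup J$.

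Every nonzero coefficient appearing in $X^*$ or $Y^*$ is a nonzero element of $\Phi_m$, so by iterating \cref{lem:composition} its image has index at most $M_0 := \bigl([G:\psi(G)] \prod_{j=1}^k [G:\phi_j(G)]\bigr)^{(k+1)m}$ in $G$; the same bound applies to $\Xi_{X^*}(G^{|I|})$ and $\Xi_{Y^*}(G^{|J|})$. Combining \cref{lem:pushforward} with the finite-index estimate of \cref{lem:finite-index} (in its $G^p \to G$ form) gives $\int_{G^p} F(\Xi(\vec u))\, d\mu^p(\vec u) \leq M_0 \int_G F\, d\mu$ for any nonnegative measurable $F$. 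Applying this twice via Fubini on the blocks indexed by $I$ and $J$ produces
\[
\iint_{G^2} \mathbf{1}_{A_{i^*}}(\psi(y)) \mathbf{1}_{A_{i^*}}(x) \prod_{j=1}^k \mathbf{1}_{A_{i^*}}(x+\phi_j(y)) \, d\mu(x)\, d\mu(y) \geq \frac{1}{r |\mathcal{P}|\, M_0^2},
\]
and summing over $i$ gives \eqref{eq:counting-brauer} with a constant $c_3$ depending only on $r, k$ and the indices $[G:\psi(G)], [G:\phi_j(G)]$.

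The main obstacle I expect is the change-of-variables step from $G^n$ back down to $G^2$: one needs the pushforward $(\Xi_{X^*})_* \mu^{|I|}$ to be a well-controlled multiple of $\mu$, and this requires every $\xi \in \Phi_m$ appearing in $X^*$ (and similarly in $Y^*$) to have finite-index image. This is the only place where the commutativity hypothesis is used — it ensures that the compositions $\psi^{i_0} \phi_1^{i_1} \cdots \phi_k^{i_k}$ defining $\Phi_m$ are well-posed and that iterated \cref{lem:composition} yields the uniform index bound $M_0$. Once those facts are in hand, the rest of the argument is bookkeeping on the constants.
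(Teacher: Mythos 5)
Your proposal is correct and follows essentially the same strategy as the paper: apply \cref{th:mpc-hom} to get a pointwise lower bound of $1$, integrate over $G^n$, and then use the finite-index estimate from \cref{lem:finite-index} (together with \cref{lem:pushforward}) to relate the resulting integral over $G^n$ back to the target quantity on $G^2$. The only cosmetic differences are that you pigeonhole to isolate a single pair $(X^*,Y^*)$ and color $i^*$ before applying the change of variables, whereas the paper bounds each pair's contribution uniformly and sums, and that you treat the block maps $\Xi_{X^*},\Xi_{Y^*}$ in one step via the pushforward estimate on $G^{|I|}\to G$, whereas the paper applies \cref{lem:finite-index} one coordinate at a time ($|U|+|V|$ applications). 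These are equivalent; your remark about why nonempty disjoint supports and the finite-index images of elements of $\Phi_m$ (via \cref{lem:composition}) are what make the change of variables legitimate is exactly the key point the paper exploits.
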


\begin{proof}Consider the set $S_m(x_1, \ldots, x_n)$ given by Theorem \ref{th:mpc-hom}, where we now let $x_1, \ldots, x_n$ vary over $G$. Note that for any non-zero $\phi \in \Phi_m$, we have $[G:\phi(G)] < \infty$ by \cref{lem:composition}. Let $R=R(x_1, \ldots, x_n)$ be the set of all pairs $(z, t)$ where $z, t \in S_m(x_1, \ldots, x_n)$ have nonempty and disjoint supports.

Suppose $G = \bigcup_{i=1}^r A_i$. For $i \in [r]$, we define
\[
T_i : = \iint_{G^2} 1_{A_i}( \psi(y) )  1_{A_i}(x) 1_{A_i}(x+ \phi_1(y)) \cdots 1_{A_i}(x+ \phi_k(y)) \, d\mu(x) d\mu(y). 
\]
Let $(z,t) \in R$ be arbitrary, and suppose
\[
z = \sum_{u \in U} \zeta_u(x_u) \qquad \textup{and} \qquad t = \sum_{v \in V} \xi_v (x_v)
\]
where $U, V \subset [n]$ are nonempty and disjoint and $\zeta_u, \xi_v \in \Phi_m \setminus \{ 0\}$. We have
\begin{eqnarray*}
& & \int_{G^n}  1_{A_i}( \psi(t)) 1_{A_i}(z) 
1_{A_i}(z+ \phi_1(t)) \cdots 1_{A_i}(z+ \phi_k(t)) \, d\mu(x_1) \cdots d\mu(x_n) \\
& = & \int_{G^n} 1_{A_i} \left( \sum_{v \in V} \psi( \xi_v (x_v) ) \right) 
1_{A_i} \left( \sum_{u \in U} \zeta_u(x_u)\right)
1_{A_i} \left( \sum_{u \in U} \zeta_u(x_u)  + \phi_1 \left( \sum_{v \in V} \xi_v (x_v) \right) \right) \cdots \\
& & \qquad 1_{A_i} \left( \sum_{u \in U} \zeta_u(x_u)  + \phi_k \left( \sum_{v \in V} \xi_v (x_v) \right)  \right) \, d\mu(x_1) \cdots d \mu(x_n) \\
& \ll & \int_{G^n} 1_{A_i} \left( \sum_{v \in V} \psi( x_v ) \right) 
1_{A_i} \left( \sum_{u \in U} x_u \right)
1_{A_i} \left( \sum_{u \in U} x_u  + \phi_1 \left( \sum_{v \in V} x_v \right) \right) \cdots \\
& & \qquad 1_{A_i} \left( \sum_{u \in U} x_u  + \phi_k \left( \sum_{v \in V} x_v \right)  \right) \, d\mu(x_1) \cdots d \mu(x_n) \\
& = & \iint_{G^2} 1_{A_i}( \psi(y) ) 1_{A_i}(x) 1_{A_i}(x  + \phi_1(y) ) \cdots 1_{A_i}(x  + \phi_k(y) ) \, d\mu(x) \, d\mu(y) \\
&=& T_i, 
\end{eqnarray*}
by $|U|+|V|$ applications of \cref{lem:finite-index}.

Fixing any $(x_1, \ldots, x_n) \in G^n$, the set $S_m(x_1, \ldots, x_n)$ becomes a subset of $G$ and so the coloring $G = \bigcup_{i=1}^r A_i$ naturally induces a coloring of $S_m(x_1, \ldots, x_n)$. By \cref{th:mpc-hom}, there exists $i \in [r]$ and $(z, t) \in R$ such that
\[
    \psi(t), z, z+ \phi_1(t), \cdots, z+ \phi_k(t) \in A_i.
\]
Thus, for every $(x_1, \ldots, x_n) \in G^n$,
\[
    \sum_{i=1}^r \sum_{(z, t) \in R } 1_{A_i}( \psi(t)) 1_{A_i}(z) 1_{A_i}(z+ \phi_1(t)) \cdots 1_{A_i}(z+ \phi_k(t)) \geq 1.
\]
It follows that
\begin{eqnarray*}
1 & \leq & \int_{G^n} \sum_{i=1}^r \sum_{(z, t) \in R } 1_{A_i}( \psi(t)) 1_{A_i}(z) 1_{A_i}(z+ \phi_1(t)) \cdots 1_{A_i}(z+ \phi_k(t)) \, d\mu(x_1) \cdots d\mu(x_n) \\
&\leq& \sum_{i=1}^r  \int_{G^n} \sum_{(z, t) \in R } 1_{A_i}( \psi(t)) 1_{A_i}(z) 1_{A_i}(z+ \phi_1(t)) \cdots 1_{A_i}(z+ \phi_k(t)) \, d\mu(x_1) \cdots d\mu(x_n)  \\
& \ll & \sum_{i=1}^r T_i,   
\end{eqnarray*}
thus finishing the proof.
\end{proof}

% \textcolor{red}{Rewrite this sentence.}
To prove \cref{th:main-partition}, we will need the following proposition. 
% need only the case $k=1$ of \cref{th:counting-partition}.  %\textcolor{red}{It's a bit awkward here when we prove for $f_i$ while later we only use $1_{A_i}$.}
With an eye to potential applications, we state and prove a slightly stronger version than what is needed.

%The following proposition is slightly stronger than what we need. However, since its proof is not harder, we give it here for future applications.

%We will record the following fact for future applications.

\begin{proposition} \label{prop:partition-bohr-stronger} 
Let $\phi, \psi: G \to G$ be commuting continuous homomorphisms with images of finite index. Suppose $f_1, \ldots, f_r: G \to [0,1]$ are measurable functions such that $\sum_{i=1}^r f_i \geq 1$ pointwise. For $w \in G$, define
\[
  R_i(w) =  \iint_{G^2}  f_i(\psi(y)) f_i(x+w) f_i(x + \phi(y)) \ d\mu(x) d\mu(y). 
\]
Then there are $c, k, \eta >0$ depending only on $r$ and the indexes above such that for some $i \in [r]$, the set
 $\{ w \in G: R_i(w) > c\}$ contains a Bohr-$(k,\eta)$ set. 
\end{proposition}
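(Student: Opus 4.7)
The plan is to combine \cref{th:counting-partition}, which handles the case of indicator functions from a genuine partition, with the Fourier regularity tools of \cref{lem:counting2} and \cref{lem:translate} to promote a single point bound to a bound on a Bohr set.

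First, I would reduce the weighted hypothesis $\sum_i f_i \geq 1$ to an honest partition: define
\[
A_i := \bigl\{ x \in G : \, i = \min\{\, j : f_j(x) \geq 1/r \,\} \bigr\},
\]
so that $G = \bigsqcup_{i=1}^r A_i$ and $f_i \geq r^{-1} 1_{A_i}$ pointwise. Then
\[
R_i(0) \geq r^{-3} \iint_{G^2} 1_{A_i}(\psi(y)) \, 1_{A_i}(x) \, 1_{A_i}(x+\phi(y)) \, d\mu(x)\,d\mu(y),
\]
so summing and applying \cref{th:counting-partition} with $k=1$ and $\phi_1 = \phi$ (its commuting and finite-index hypotheses being exactly what we have assumed on $\psi, \phi$) yields $\sum_{i=1}^r R_i(0) \geq c_0$ for some $c_0 > 0$ depending only on $r$ and the indices $[G:\psi(G)], [G:\phi(G)]$. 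Pigeonholing selects $i^* \in [r]$ with $R_{i^*}(0) \geq c_0/r$.

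Next, I would show that $w \mapsto R_{i^*}(w)$ is nearly constant on an appropriate Bohr set. Set $g_w(x) := f_{i^*}(x+w) - f_{i^*}(x)$; then
\[
R_{i^*}(w) - R_{i^*}(0) = \iint_{G^2} g_w(x) \, f_{i^*}(x+\phi(y)) \, f_{i^*}(\psi(y)) \, d\mu(x)\,d\mu(y),
\]
which has precisely the shape of \cref{lem:counting2} with $g_w$ playing the role of $f_1$. Since $\|f_{i^*}\|_2 \leq 1$, this gives
\[
|R_{i^*}(w) - R_{i^*}(0)| \ll \|\widehat{g_w}\|_\infty,
\]
with the implied constant depending only on $[G:\phi(G)]$ and $[G:\psi(G)]$. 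A direct computation gives $\widehat{g_w}(\gamma) = (\gamma(w) - 1)\widehat{f_{i^*}}(\gamma) = \widehat{(f_{i^*})_w}(\gamma) - \widehat{f_{i^*}}(\gamma)$, so \cref{lem:translate} (applied with $k=1$, $f_1 = f_{i^*}$ and $\phi_1 = \mathrm{id}_G$) tells us that for any $\eta>0$ the set $\{w : \|\widehat{g_w}\|_\infty < \eta\}$ contains a Bohr-$(\lceil 4/\eta^2\rceil, \eta)$ set.

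Finally, I would choose $\eta$ small enough (depending only on $r$ and the indices) so that $|R_{i^*}(w) - R_{i^*}(0)| < c_0/(2r)$ on this Bohr set; then $R_{i^*}(w) > c_0/(2r) =: c$ throughout, giving the desired conclusion with $k, \eta$ depending only on $r$ and the indices. I expect the main subtlety to be the correct bookkeeping in Step 2 — namely, verifying that \cref{lem:counting2} can be applied with the difference $g_w$ in the first slot rather than one of the $f_i$'s, since it is the $L^\infty$-norm of $\widehat{g_w}$ (not of $\widehat{f_{i^*}}$) that is controlled by Bohr-set membership via \cref{lem:translate}. Once this matching is in place, everything else is routine.
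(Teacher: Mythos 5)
Your proposal is correct and follows essentially the same route as the paper's proof: threshold the $f_i$ at $1/r$ to produce a partition, apply \cref{th:counting-partition} (with $k=1$) to get $R_i(0)$ bounded below for some $i$, then use \cref{lem:counting2} to control $|R_i(w)-R_i(0)|$ by $\|\widehat{f_i}-\widehat{f_{i,w}}\|_\infty$ and \cref{lem:translate} to find a Bohr set of $w$'s where this is small. The ``bookkeeping'' you flagged is indeed fine, since \cref{lem:counting2} applies to arbitrary bounded functions and the $L^\infty$ Fourier bound falls on the slot occupied by $g_w$.
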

\begin{proof}
%The proof is the same as that of \cref{prop:partition-bohr-stronger}, using \cref{lem:counting2} instead of \cref{lem:bourgain_lemma_2}. \Hnote{to fill out}

%\textcolor{red}{Fix the first part a bit}.
For $i \in [r]$, let $A_i = \{x \in G: f_i(x) \geq 1/r\}$. Since $\sum_{i=1}^r f_i \geq 1$ pointwise, $G = \bigcup_{i=1}^r A_i$. In light of \cref{th:counting-partition}, there exists a constant $c$ depending only on $r$ and the indexes and an $i \in [r]$ such that 
\[
    \iint_{G^2} 1_{A_i} (\psi(y)) 1_{A_i}(x) 1_{A_i}(x + \phi(y)) \ d\mu(x) d\mu(y) > c.
\]
It then follows that
\[
    R_i(0) \geq \frac{c}{r^3}.  
\]
On the other hand, by \cref{lem:counting2}, for every $w \in G$,
\begin{equation*}
\label{eq:partition_R_i}
    |R_i(w) - R_i(0)| \ll \lVert \widehat{f_i} - \widehat{f_{i,w}} \rVert_{\infty},
\end{equation*}
where the implicit constant depends only on the indexes of $\phi(G)$ and $\psi(G)$ in $G$. 
%In light of \cref{th:counting-partition}, there exists $i_0 \in [r]$ such that $R_{i_0}(0) > c$ for some constant $c>0$, where $c$ depends only on $r$ and the aforementioned indices. 
Hence, there exists a constant $c'$ such that $R_{i}(w) \geq \frac{c}{2r^3}$ if
\begin{equation*}
% \label{eq:br_f_w_needed}
    \lVert \widehat{f_{i}} - \widehat{f_{i,w}} \rVert_{\infty} < c'.
\end{equation*}
By \cref{lem:translate}, the set of such $w$ contains a Bohr-$(k,\eta)$ set, where $k$ and $\eta$ depend only on $c'$.
\end{proof}
 
\cref{th:main-partition} is now a special case of the next theorem with $\psi_1 = \phi_2$ and $\psi_2 = \phi_1$. 

\begin{theorem}\label{th:main-partition-true}
Let $G=\bigcup_{i=1}^r A_i$ be a partition into measurable sets. Let $\phi_1, \phi_2, \psi_1, \psi_2: G \rightarrow G$ be continuous homomorphisms satisfying the following:
\begin{enumerate}
\item $\phi_2 \circ \psi_2 = \phi_1 \circ \psi_1$,
\item $\psi_1 \circ \psi_2 = \psi_2 \circ \psi_1$,
\item $\phi_1(G), \psi_1(G), \psi_2(G)$ have finite index in $G$.
\end{enumerate}
Then for some $1 \leq i \leq r$, the set $\phi_1(A_i) - \phi_1(A_i) + \phi_2(A_i)$ contains a Bohr-$(k, \eta)$ set, where $k$ and $\eta$ depend only on $r$ and the indexes of $\phi_1(G), \psi_1(G), \psi_2(G)$ in $G$.
\end{theorem}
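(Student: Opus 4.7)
The plan is to derive \cref{th:main-partition-true} from \cref{prop:partition-bohr-stronger}, letting hypothesis~(1) play the role of an algebraic collapsing identity and using \cref{lem:bohr-homo2} to push the resulting Bohr set through $\phi_1$.

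First I would apply \cref{prop:partition-bohr-stronger} with $\phi := \psi_1$, $\psi := \psi_2$, and $f_i := 1_{A_i}$. Hypothesis~(2) supplies the commutativity of $\psi_1$ and $\psi_2$, hypothesis~(3) supplies the finite-index condition on $\psi_1(G)$ and $\psi_2(G)$, and $\sum_i 1_{A_i} \geq 1$ because the $A_i$ partition $G$. The proposition would then yield constants $c_0, k, \eta > 0$ depending only on $r$, $[G:\psi_1(G)]$, $[G:\psi_2(G)]$, an index $i \in [r]$, and a Bohr-$(k,\eta)$ set $B \subseteq G$ such that for every $w \in B$,
\[
R_i(w) := \iint_{G^2} 1_{A_i}(\psi_2(y))\, 1_{A_i}(x+w)\, 1_{A_i}(x+\psi_1(y)) \, d\mu(x)\, d\mu(y) > c_0.
\]

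Next, for each $w \in B$, positivity of $R_i(w)$ produces a positive-measure set of $(x,y) \in G^2$ for which $\alpha := x+w$, $\beta := x + \psi_1(y)$, and $\gamma := \psi_2(y)$ all lie in $A_i$. The key computation, invoking hypothesis~(1) $\phi_1 \circ \psi_1 = \phi_2 \circ \psi_2$, is
\[
\phi_1(\alpha) - \phi_1(\beta) + \phi_2(\gamma) = \phi_1(w) - \phi_1(\psi_1(y)) + \phi_2(\psi_2(y)) = \phi_1(w).
\]
Thus $\phi_1(w) \in \phi_1(A_i) - \phi_1(A_i) + \phi_2(A_i)$ for every $w \in B$, i.e.
\[
\phi_1(B) \subseteq \phi_1(A_i) - \phi_1(A_i) + \phi_2(A_i).
\]

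Finally, since $[G : \phi_1(G)] < \infty$ by~(3), \cref{lem:bohr-homo2} guarantees that $\phi_1(B)$ contains a Bohr-$(k', \eta')$ set whose parameters depend only on $k, \eta$ and $[G : \phi_1(G)]$, hence only on $r$ and the three indices appearing in the theorem. The entire reduction is short and I do not anticipate a real obstacle; the only non-automatic step is to spot that hypothesis~(1) forces the three $A_i$-terms to collapse to $\phi_1(w)$, which is exactly why $\psi_1, \psi_2$ (and not $\phi_1, \phi_2$) are the natural homomorphisms to feed into \cref{prop:partition-bohr-stronger}.
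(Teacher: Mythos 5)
Your proposal is correct and follows essentially the same route as the paper: apply \cref{prop:partition-bohr-stronger} with $\psi_1,\psi_2$ in place of the two homomorphisms and $f_i=1_{A_i}$, use hypothesis~(1) to collapse $\phi_1(x+w)-\phi_1(x+\psi_1(y))+\phi_2(\psi_2(y))$ to $\phi_1(w)$, and finish by pushing the Bohr set through $\phi_1$ via \cref{lem:bohr-homo2}. The only difference is cosmetic (your labeling of which of $\psi_1,\psi_2$ plays the role of $\psi$ versus $\phi$), which is immaterial since the two slots in \cref{prop:partition-bohr-stronger} are symmetric.
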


\begin{proof}
Suppose $G = \bigcup_{i=1}^r A_i$. We apply \cref{prop:partition-bohr-stronger} with $f_i = 1_{A_i}$ and $(\psi_1, \psi_2)$ in place of $(\psi, \phi)$. Then for some $i$, the set $\{ w \in G: R_i(w) > c\}$ contains a Bohr-$(k,\eta)$ set $B$. This means that for $w \in B$, there exist $x, y \in G$ such that
\[
x+w, \psi_2(y), x + \psi_1(y) \in A_i.
\]
Since
\[
\phi_1(x+w) + \phi_2( \psi_2(y)) - \phi_1( x + \psi_1(y)) = \phi_1(w),
\]
we conclude that $\phi_1(B) \subset \phi_1(A_i) + \phi_2(A_i) - \phi_1(A_i)$. Our theorem now follows from \cref{lem:bohr-homo2}.
\end{proof}

\begin{remark} \label{rk:auto2}
Here we explain why the commutativity condition of $\phi_1$ and $\phi_2$ in \cref{th:main-partition} is not necessary if $\phi_1$ or $\phi_2$ is an automorphism (see \cref{remark:first_one}). If $\phi_1$ is an automorphism, in \cref{th:main-partition-true}, we take $\psi_1 = \phi_1^{-1} \circ \phi_2$ and $\psi_2 = \textup{Id}$, the identity homomorphism. Then the first two conditions of \cref{th:main-partition-true} are satisfied. As for the third condition, we have $\psi_1(G) = \phi_1^{-1} \circ \phi_2(G),$ which has finite index in $G$ by \cref{lem:composition}. A similar argument applies to the case $\phi_2$ is an automorphism.
\end{remark}

% \Hnote{Can we find interesting cases in which $\phi_1, \phi_2$ are not commuting?}

\section{Bohr sets and sets of positive measure}
\label{sec:density}

\subsection{A regularity lemma}
The goal of this section is to prove \cref{prop:regularity_lemma}. As mentioned in the introduction, this argument has its genesis in Bourgain \cite{bourgain-roth}. Bourgain's ideas were elaborated by Tao \cite{tao}, who proved Roth's theorem in compact abelian groups that are 2-divisible; and by Bergelson-Host-McCutcheon-Parreau \cite[Theorem 4.1]{bhmp}, who proved Roth's theorem for dilations on the torus $\R / \Z$. We streamline and generalize Bergelson-Host-McCutcheon-Parreau's argument to deal with homomorphisms on arbitrary compact abelian groups. This generalization requires non-trivial modifications; especially, we will make use of \cref{lem:translate} and \cref{lem:kernel_a}. 

%\Hnote{Define $d(t), J(f)$, etc.}

% For $f \in L^\infty(G)$, we write
% \[
% J(f) = \iint_{G^2} f(x)f(x+ \phi(y))f(x+ \psi(y)) \, d\mu(x) d\mu(y)
% \]

\begin{lemma}[cf. {\cite[Lemma 4.2]{bhmp}}]
\label{lem:J}
Let $\phi, \psi: G \rightarrow G$ be continuous homomorphisms such that $\phi(G), \psi(G)$ and $(\phi-\psi)(G)$ have finite indexes in $G$. For $f \in L^\infty(G)$, define
\[
J(f) = \iint_{G^2} f(x)f(x+ \phi(y))f(x+ \psi(y)) \, d\mu(x) d\mu(y).
\]
Then for any measurable functions $f, g: G \to [0,1]$, 
    \begin{equation*} \label{eq:c}
        |J(f) - J(g)| \ll \| \widehat{f} - \widehat{g} \|_\infty,
    \end{equation*}
where the implicit constant depends only on the aforementioned indexes. 
%$[G: \phi(G)], [G: \psi(G)]$ and $[G: (\phi-\psi)(G)]$.		
\end{lemma}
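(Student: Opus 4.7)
The natural approach is a telescoping argument that isolates $\widehat{f-g}$ as the ``$L^\infty$-factor'' appearing in \cref{lem:bourgain_lemma_2}. Writing $h := f - g$, we decompose
\[
J(f) - J(g) = I_1 + I_2 + I_3,
\]
where
\begin{align*}
I_1 &= \iint h(x)\, f(x+\phi(y))\, f(x+\psi(y)) \, d\mu(x) d\mu(y), \\
I_2 &= \iint g(x)\, h(x+\phi(y))\, f(x+\psi(y)) \, d\mu(x) d\mu(y), \\
I_3 &= \iint g(x)\, g(x+\phi(y))\, h(x+\psi(y)) \, d\mu(x) d\mu(y).
\end{align*}
Each $|I_j|$ will be bounded by $\|\widehat{h}\|_\infty$ up to a constant depending only on the three indices.

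For $I_1$, I would apply \cref{lem:bourgain_lemma_2} directly with $(f_1,f_2,f_3) = (h,f,f)$ and $K \equiv 1$ (so that $\widehat{K}$ is supported at the trivial character and $\|\widehat{K}\|_1 = 1$). Since $\|f\|_2 \leq 1$, this gives $|I_1| \ll \|\widehat{h}\|_\infty$, with constant depending only on $[G:\phi(G)]$ and $[G:\psi(G)]$. For $I_2$ and $I_3$, a translation-invariance change of variable moves $h$ into the first slot: substituting $x \mapsto x - \phi(y)$ in $I_2$ rewrites it as $\iint h(x)\, g(x+(-\phi)(y))\, f(x+(\psi-\phi)(y)) \, d\mu(x) d\mu(y)$; and substituting $x \mapsto x - \psi(y)$ in $I_3$ produces $\iint h(x)\, g(x+(-\psi)(y))\, g(x+(\phi-\psi)(y)) \, d\mu(x) d\mu(y)$.

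Then I would apply \cref{lem:bourgain_lemma_2} again in each case with $K \equiv 1$. The only point that needs checking is that the new pairs of homomorphisms have images of finite index in $G$, since that is the hypothesis of \cref{lem:bourgain_lemma_2}. For $I_2$ the relevant pair is $(-\phi,\psi-\phi)$, whose images are $\phi(G)$ and $(\psi-\phi)(G)$; for $I_3$ the pair is $(-\psi,\phi-\psi)$, with images $\psi(G)$ and $(\phi-\psi)(G)$. All three of the indices $[G:\phi(G)]$, $[G:\psi(G)]$, $[G:(\phi-\psi)(G)]$ appear exactly once across $I_1,I_2,I_3$, so they suffice and govern the implicit constant.

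The only mildly subtle step is recognizing that one must bring $h$ into the first argument slot of \cref{lem:bourgain_lemma_2} (rather than, say, estimating $I_2$ directly, which would require $\|\widehat{h}\|_\infty$ to appear for the middle factor). Once the change of variables is performed and the finite-index hypotheses are verified for the two new configurations, the estimate follows in two lines from \cref{lem:bourgain_lemma_2} applied with the constant kernel. I do not anticipate a genuine obstacle; the main care is simply bookkeeping of which finite-index assumption is needed where.
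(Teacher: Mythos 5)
Your proposal is correct and matches the paper's proof exactly: the same telescoping decomposition $J(f)-J(g)=I_1+I_2+I_3$, followed by an application of \cref{lem:bourgain_lemma_2} (with $K\equiv 1$) to each term, moving $h=f-g$ into the first slot via a change of variables for $I_2$ and $I_3$. (One trivial bookkeeping slip: each of the three indices actually appears in two of the three terms, not exactly once; this does not affect the argument.)
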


\begin{proof}
We have
\begin{eqnarray*}
J(f) - J(g) &=& \iint_{G^2} (f-g)(x) \cdot f(x+ \phi(y)) \cdot f(x+ \psi(y)) \, d\mu(x) d\mu(y) \\
& & + \iint_{G^2} g(x) \cdot ( f-g) (x+ \phi(y)) \cdot f(x+ \psi(y)) \, d\mu(x) d\mu(y) \\
& & + \iint_{G^2} g(x) \cdot g(x+ \phi(y)) \cdot (f - g) (x + \psi(y)) \, d\mu(x) d\mu(y).
\end{eqnarray*}
The lemma now follows from Lemma \ref{lem:bourgain_lemma_2} and the assumptions on $\phi$ and $\psi$.
\end{proof}

% \textcolor{red}{Choose between $\lVert K \rVert_{A(G)}, \lVert \widehat{K} \rVert_1$ (and maybe $\lVert K \rVert_{\infty}$.)}

\begin{proposition}[Regularity Lemma]
\label{prop:regularity_lemma} Let $f: G \to [0, 1]$ be a measurable function with $\int_G f \, d \mu = \delta >0$. Let $\phi, \psi: G \to G$ be continuous homomorphisms such that $\phi(G), \psi(G)$ and $(\phi- \psi)(G)$ have finite indexes in $G$. Then for every $\epsilon > 0$, there exist a constant $C$ that depends only on $\delta, \epsilon$ and the indexes above, a kernel $K: G \to \R_{\geq 0}$, and a decomposition $f = f_{st} + f_{er} + f_{un}$ such that
\begin{enumerate}
    \item $\lVert K \rVert_{\infty} < C$,
    \item $\lVert f_{st} \rVert_{\infty} \leq 1$, $\lVert f_{er} \rVert_{\infty} \leq 2$ and $\lVert f_{un} \rVert_{\infty} \leq 2$,
    \item $\lVert f_{er} \rVert_2 < \epsilon$,
    \item $\lVert \widehat{f}_{un} \rVert_{\infty} \lVert \widehat{K} \rVert_1 < \epsilon$,
    \item $J'(f_{st}) := \displaystyle \iint_{G^2} f_{st}(x) f_{st}(x+ \phi(t)) f_{st}(x+ \psi(t)) K(t) \, d\mu(x) d\mu(t) \geq \delta^3 - \epsilon$.
\end{enumerate}

%\textcolor{red}{Is $\lVert \hat{k} \rVert_1 = \lVert k \rVert_{A(G)} =  \lVert k \rVert_{\infty}?$ This equality is claimed in \cref{lem:kernel_a}.}
\end{proposition}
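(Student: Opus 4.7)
The plan is to implement the Bourgain-style iterative Fourier argument as carried out by Bergelson--Host--McCutcheon--Parreau \cite{bhmp} (following \cite{bourgain-roth}), adapting it from the torus to an arbitrary compact abelian group and to continuous homomorphisms that are not necessarily automorphisms. The key technical inputs are the kernel construction \cref{lem:kernel_a}, the Fourier translate lemma \cref{lem:translate}, and the trilinear counting estimate \cref{lem:bourgain_lemma_2}; the finite index hypotheses on $\phi(G)$, $\psi(G)$, $(\phi-\psi)(G)$ are required precisely to invoke the last of these.

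I would build an increasing chain of finite sets $\Lambda_0 \subset \Lambda_1 \subset \cdots \subset \Gamma$ and decreasing widths $\eta_0 \geq \eta_1 \geq \cdots > 0$, producing at each stage a kernel $K_j$ from \cref{lem:kernel_a} supported on $B(\Lambda_j; \eta_j)$ with $\widehat{K_j} \geq 0$, $\widehat{K_j}(0) = 1$, $\widehat{K_j}(\gamma) \geq 1 - \eta_j$ for $\gamma \in \Lambda_j$, and $\|\widehat{K_j}\|_1 = \|K_j\|_\infty \leq (C_0 \eta_j)^{-|\Lambda_j|}$. Setting $f_{st,j} := f \ast K_j$ gives $\widehat{f_{st,j}}(\gamma) = \widehat{K_j}(\gamma)\widehat{f}(\gamma)$ and $\|f_{st,j}\|_\infty \leq 1$. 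Writing $f_{un,j} := f - f_{st,j}$, I test the stopping condition
\[
\bigl\|\widehat{f_{un,j}}\bigr\|_\infty \cdot \|\widehat{K_j}\|_1 < \epsilon.
\]
If it holds, I terminate by setting $K := K_j$, $f_{st} := f_{st,j}$, $f_{un} := f_{un,j}$, $f_{er} := 0$, which immediately yields properties (1), (2), (4) and (5). Otherwise, there is some $\gamma^* \in \Gamma$ with $|\widehat{f}(\gamma^*)|\,|1 - \widehat{K_j}(\gamma^*)| \geq \epsilon/\|\widehat{K_j}\|_1$; adjoin $\gamma^*$ to $\Lambda_{j+1}$ and choose $\eta_{j+1}$ small enough that $\widehat{K_{j+1}}(\gamma^*) \geq 1 - \eta_{j+1}$. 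A Parseval energy-increment computation shows $\|f_{st,j+1}\|_2^2 - \|f_{st,j}\|_2^2 \gtrsim \epsilon^2/\|\widehat{K_j}\|_1^2 > 0$, and since $\|f_{st,j}\|_2^2 \leq \|f\|_2^2 \leq \delta$ is bounded above, the iteration must terminate.

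The remaining task is to verify property (3). By Jensen's inequality applied to the convex function $x \mapsto x^3$ on $[0,\infty)$, one has $A(0) := \int_G f(x)^3\,d\mu(x) \geq (\int f\,d\mu)^3 = \delta^3$. Ensuring that $\Lambda_0$ already contains the characters $\Lambda^{\!*}\circ\phi$ and $\Lambda^{\!*}\circ\psi$ produced by \cref{lem:translate} applied to $f_1 = f_2 = f$ with $\phi_1 = \phi$, $\phi_2 = \psi$, for an appropriate large spectrum $\Lambda^{\!*}$ of $f$, guarantees that for every $t \in \mathrm{supp}(K)$ both $\|\widehat{f} - \widehat{f_{\phi(t)}}\|_\infty$ and $\|\widehat{f} - \widehat{f_{\psi(t)}}\|_\infty$ are at most some prescribed $\eta' \ll \epsilon$. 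Telescoping $A(t) - A(0)$ as a sum of two triple integrals and applying \cref{lem:bourgain_lemma_2} to each (with kernel $K$ inserted on the $t$-variable) then gives $\bigl|\int A(t)K(t)\,d\mu(t) - A(0)\bigr| < \epsilon/2$, i.e., $J'(f) > \delta^3 - \epsilon/2$. Three further applications of \cref{lem:bourgain_lemma_2}, one for each factor, bound $|J'(f) - J'(f_{st})|$ by $O(\|\widehat{f_{un}}\|_\infty \|\widehat{K}\|_1) < O(\epsilon)$, and combining yields $J'(f_{st}) > \delta^3 - \epsilon$ after adjusting constants at the start.

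The chief obstacle is the quantitative bookkeeping in the iteration. Refining a kernel forces $\eta_{j+1}$ to be chosen much smaller than $\eta_j$ so that $\widehat{K_{j+1}}$ captures the newly adjoined character, and this blows up $\|\widehat{K_{j+1}}\|_1$ and tightens the next stopping threshold. One must verify that the Parseval increment at step $j+1$ still exceeds $\Omega(\epsilon^2/\|\widehat{K_j}\|_1^2)$ despite this deterioration, so that the total number of steps is bounded in terms of $\delta$ and $\epsilon$. This careful choice of parameters is what produces the tower-type dependence of the final constant $C$ on $\delta$, $\epsilon$, and the indices $[G:\phi(G)], [G:\psi(G)], [G:(\phi-\psi)(G)]$, as remarked at the end of the introduction.
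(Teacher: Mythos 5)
Your proposal follows the same Bourgain/Bergelson--Host--McCutcheon--Parreau template as the paper, but two steps do not go through. First, the termination: you claim $\lVert f_{st,j+1}\rVert_2^2 - \lVert f_{st,j}\rVert_2^2 \gtrsim \epsilon^2/\lVert\widehat{K_j}\rVert_1^2$, but $\lVert f_{st,j}\rVert_2^2 = \sum_\gamma |\widehat f(\gamma)|^2\widehat{K_j}(\gamma)^2$, and $K_{j+1}$ is a fresh kernel from \cref{lem:kernel_a} built on a smaller Bohr set, not one satisfying $\widehat{K_{j+1}}\ge\widehat{K_j}$ pointwise; the contributions at characters $\gamma\ne\gamma^*$ can decrease, so the energy need not increase at all. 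Even granting a positive increment, a gain of order $\epsilon^2/\lVert\widehat{K_j}\rVert_1^2$ shrinks as $\lVert\widehat{K_j}\rVert_1$ grows, and the sum of such increments can converge, so the bound $\lVert f_{st,j}\rVert_2^2\le\delta$ does not cap the number of steps. The paper avoids both issues by not asking for monotonicity: it fixes the radii $\eta_n$ in advance so that (Claim 2) $\lVert f_{n+1}-f_n\rVert_2^2 \le \lVert f_{n+1}\rVert_2^2 - \lVert f_n\rVert_2^2 + 2\eta_{n+1}\kappa_n^2$ with $2\eta_{n+1}\kappa_n^2\le\epsilon^2/2$, then pigeonholes over $n\le M\approx 2/\epsilon^2$ to find an index with $\lVert f_{n+1}-f_n\rVert_2\le\epsilon$. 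That pigeonhole step is why $f_{er}=f_{n+1}-f_n$ is generically nonzero, and it is what yields a step count, hence a constant $C$, depending only on $\delta$, $\epsilon$ and the indices.

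Second, your derivation of property (3) works with $A(t)$ built from the raw $f$ and asserts $|A(t)-A(0)|$ is small on $\mathrm{supp}\, K$. The natural route $|A(t)-A(0)|\le\lVert f-f_{\phi(t)}\rVert_1+\lVert f-f_{\psi(t)}\rVert_1$ is not controlled by $\lVert\widehat f-\widehat{f_{\phi(t)}}\rVert_\infty<\eta'$: the Fourier coefficients of a generic $L^\infty$ function are not $\ell^1$-summable, so a small $\ell^\infty$ bound on the Fourier side does not yield a small $L^1$ bound on the function. The paper works with the smoothed $f_n=f*K_n$, where $\lVert\widehat{K_n}\rVert_1\le\kappa_n^2<\infty$ gives $\lVert f_n-(f_n)_{\phi(t)}\rVert_2^2\le\lVert\widehat{K_n}\rVert_1\,d(t)^2$ and hence the pointwise bound $|I_n(0)-I_n(t)|\le 2\kappa_n d(t)$, and then inserts a \emph{third} kernel $K$, supported where $d(t)\le\epsilon/(2\kappa_n)$, to define $J'$. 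Your appeal to \cref{lem:bourgain_lemma_2} to bound $|J'(f)-A(0)|$ also does not deliver cancellation: that lemma gives an upper bound of size $\lVert\widehat K\rVert_1$, not a small difference, and one of the telescoped pieces has the configuration $(x,\,x,\,x+\psi(t))$, i.e.\ $\phi=0$, which falls outside the finite-index hypothesis. Your final transfer from $J'(f)$ to $J'(f_{st})$ via the stopping condition is sound, but the two preceding steps require the paper's distinct mechanism.
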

\begin{proof}
For $t \in G$, let
		\[
			d(t) := \max \left( \| \widehat{f} - \widehat{f_{t}} \|_\infty, \| \widehat{f} - \widehat{f_{\phi(t)}} \|_\infty, \| \widehat{f} - \widehat{f_{\psi(t)}} \|_\infty  \right).
		\]
Fixing $\epsilon > 0$, we define sequences $\eta_n \in (0, 1/2]$, $\kappa_n \in (0, \infty)$ and finite sets $\Lambda_n \subseteq \Gamma$ recursively as follows: 

First set $\eta_0 = 1/2$. For $n \geq 0$, \cref{lem:translate} implies that there exists a set $\Lambda_n \in \Gamma$ with $|\Lambda_n| \leq 12/\eta_n^2$ such that $D(\eta_n) := \{t \in G: d(t) \leq \eta_n\}$ contains a Bohr set $B(\Lambda_n; \eta_n)$.  
For $\eta \in (0, 1/2]$, define $\nu(\eta) = (C_0 \eta)^{12/\eta^2}$ where $C_0$ is the constant found in \cref{lem:kernel_a}; in particular, $\nu(\eta_n) = (C_0 \eta_n)^{12/\eta_n^2} \leq (C_0 \eta_n)^{|\Lambda_n|}$. Put
\[
         \kappa_n = \nu(\eta_n)^{-1/2} \text{ and } \eta_{n+1} = \min \left \{\eta_n, \frac{\epsilon^2}{4 \kappa_n^2}, \epsilon \nu \left( \frac{\epsilon}{2 \kappa_n} \right) \right\}.
\]

In view of \cref{lem:kernel_a}, for $n \geq 0$, there is a kernel $K_n: G \to [0, \infty)$ such that
    \[
        \widehat{K}_n \geq 0,  \| \widehat{K_n}\|_1 = \lVert K_n \rVert_{\infty} \leq 1/\nu(\eta_n)
    \]
    and $K_n$ is supported on $B(\Lambda_n; \eta_n) \subseteq D(\eta_n)$. 
%     Note that  $B(\Lambda_n, \eta_n)$ is subset of 
%     \[
%         D(\eta_n) := \{t \in G: d(t) \leq \eta_n \}
%     \]
%     where for $t \in G$,
% 		\[
% 			d(t) := \max \left( \| \widehat{f} - \widehat{f_{t}} \|_\infty, \| \widehat{f} - \widehat{f_{\phi(t)}} \|_\infty, \| \widehat{f} - \widehat{f_{\psi(t)}} \|_\infty  \right).
% 		\] 
%     % \textcolor{red}{I changed $D(\eta_n)$ to $D(\eta_n)$ to avoid confusion with $B(\Lambda_n, \eta_n)$.}
    % \textcolor{red}{$K_n$ will be corresponding to $\Lambda = \{\gamma \in \Gamma: |\hat{f}(\gamma)| \text{ or } |\hat{f}(\gamma \circ \phi)| \text{ or } |\hat{f}(\gamma \circ \psi)| \geq \eta_n/2\}$. Then $K_n$ is supported on $B(\Lambda, \eta_n)$ which is a subset of $D(\eta_n)$.}
    % % \[
    % %     D(\eta_n) = \{t \in G: \left\| \widehat{f} - \widehat{f_{s}} \right\|_{\infty} \leq \eta_n \; \text{ for } s \in \{t, \phi(t), \psi(t)\} \},
    % % \]
    % and $\| \widehat{K_n}\|_1 = \lVert K_n \rVert_{\infty} \leq 1/\nu(\eta_n).$ 
    % % \Hnote{By Lemma 2.1, the set $D(\eta_n)$ is  contained in a Bohr-(x,y) set. By Lemma 2.3, there is a kernel $K_n$ supported on that Bohr set.}
    We define
    \[
        f_n = f * K_n.
    \]
	\noindent \textbf{Claim 1:} 
	\[
	\| \widehat{f} - \widehat{f_n} \|_{\infty} = \sup_{\gamma \in \Gamma} \left| \widehat{f}(\gamma) (1-\widehat{K_n}(\gamma)) \right| \leq \eta_n.
	\]
	Indeed, by construction, $K_n$ is supported on $D(\eta_n)$, and every $t \in D(\eta_n)$ satisfies $\left| \widehat{f}(\gamma) (1 - \gamma(t)) \right| \leq \eta_n$ for all $\gamma \in \Gamma$. Therefore, for all $\gamma \in \Gamma$,
% 	Indeed, from the proof of \cref{lem:translate}, $K_n$ is supported on $B(\Lambda; \eta_n)$ where $\Lambda = \{ \gamma \in \Gamma: \left| \widehat{f}(\gamma) \right| > \frac{\eta_n}{2}\}$. 
% 	If $\gamma \not \in \Lambda$ then clearly 
% 	$ \left| \widehat{f}(\gamma) (1-\widehat{K_n}(\gamma)\right| \leq 2 \left| \widehat{f}(\gamma) \right| \leq \eta_n$. If $\gamma \in \Lambda$ then
	\begin{eqnarray*}
	\left| \widehat{f}(\gamma) \right| \left| 1-\widehat{K_n}(\gamma) \right| 
	&\leq& 
	\left| \widehat{f}(\gamma) \right| \int_{G} K_n(x) \left| 1-\overline{\gamma(x)} \right| \, d\mu(x) \\
	&=& \left| \widehat{f}(\gamma) \right| \int_{D(\eta_n)} K_n(x) \left| 1-\overline{\gamma(x)} \right| \, d\mu(x) \\
	&=& \int_{D(\eta_n)} K_n(x) \left| \widehat{f}(\gamma) \right| \left| 1-\overline{\gamma(x)} \right| \, d\mu(x)\\
	&\leq& \eta_n \int_{D(\eta_n)} K_n(x) \, d\mu(x) \leq \eta_n.
	\end{eqnarray*}
% 	In any case the claim is proved.
	
\noindent	\textbf{Claim 2:} 
    \[
        \lVert f_{n+1} - f_n \rVert_2^2 \leq \lVert f_{n+1} \rVert_2^2 - \lVert f_n \rVert_2^2 + 2 \eta_{n+1} \kappa_n^2.
    \]
	Indeed, we have
	\begin{eqnarray*}
	\lVert f_{n+1} - f_n \rVert_2^2 
&= &	\lVert \widehat{f_{n+1}} - \widehat{f_n} \rVert_2^2 \\
&= &	\lVert \widehat{f_{n+1}} \rVert_2^2 + \lVert \widehat{f_n} \rVert_2^2 - \sum_{\gamma \in \Gamma} 
\left(\widehat{f_{n+1}}(\gamma) \overline{\widehat{f_{n}}(\gamma)} + \overline{\widehat{f_{n+1}}(\gamma)} \widehat{f_{n}}(\gamma) \right)\\
&= &	\lVert \widehat{f_{n+1}} \rVert_2^2 - \lVert \widehat{f_n} \rVert_2^2 + 2 \lVert \widehat{f_n} \rVert_2^2 - \sum_{\gamma \in \Gamma}	\left| \widehat{f}(\gamma) \right|_2^2 \widehat{K_n}(\gamma) \widehat{K_{n+1}}(\gamma)\\
&= & \lVert \widehat{f_{n+1}} \rVert_2^2 - \lVert \widehat{f_n} \rVert_2^2 + 2 \sum_{\gamma \in \Gamma}	\left| \widehat{f}(\gamma) \right|_2^2 \widehat{K_n}(\gamma) 
\left( \widehat{K_n}(\gamma) - \widehat{K_{n+1}}(\gamma) \right)\\
& \leq & \lVert \widehat{f_{n+1}} \rVert_2^2 - \lVert \widehat{f_n} \rVert_2^2 + 2 \sum_{\gamma \in \Gamma}	\left| \widehat{f}(\gamma) \right|^2 \widehat{K_n}(\gamma) 
\left(1 - \widehat{K_{n+1}}(\gamma) \right) \\
& \leq & \lVert \widehat{f_{n+1}} \rVert_2^2 - \lVert \widehat{f_n} \rVert_2^2 + 2 \sup_{\gamma \in \Gamma} |\widehat{f}(\gamma)| \left( 1 - \widehat{K_{n+1}}(\gamma) \right) \cdot \| \widehat{K_n} \|_1 \\
& \leq & \lVert \widehat{f_{n+1}} \rVert_2^2 - \lVert \widehat{f_n} \rVert_2^2 + 2 \eta_{n+1} \kappa_n^2
	\end{eqnarray*}
 and the claim is proved.
   
		Since $\eta_{n+1} \leq \epsilon^2/(4 \kappa_n^2)$, we have
    \[
        \lVert f_{n+1} - f_n \rVert_2^2 \leq \lVert f_{n+1} \rVert_2^2 - \lVert f_n \rVert_2^2 + \epsilon^2/2.
    \]
     Let $M$ be the smallest integer such that $M \geq 2/\epsilon^2$. Then because $\lVert f_M \rVert_2^2 \leq \lVert f_M \rVert_{\infty}^2 \leq 1$,
    \[
        \sum_{n=0}^{M-1} \lVert f_{n+1} - f_n \rVert_2^2 \leq \lVert f_M \rVert_2^2 - \lVert f_0 \rVert_2^2 + M \epsilon^2/2 \leq 1 + M \epsilon^2/2 \leq M \epsilon^2.
    \]
    Therefore there exists $0 \leq n \leq M - 1$ such that
    \[
        \lVert f_{n+1} - f_n \rVert_2 \leq \epsilon.
    \]
    From now on, we fix this $n$.
    Next consider the expression
    \[
        I_n(t) = \int_G f_n(x) f_n(x+\phi(t)) f_n(x + \psi(t)) \, d \mu(x) \; \; \text{ for } t \in G.
    \]
    By the same algebra as in the proof of \cref{lem:J},
    \[
        |I_n(0) - I_n(t)| \leq \| f_n - (f_n)_{\phi(t)} \|_1 + \| f_n - (f_n)_{\psi(t)} \|_1. 			
    \]
	Note that
	\begin{eqnarray*}
		\| f_n - (f_n)_{\phi(t)} \|^2_1 &\leq & \| f_n - (f_n)_{\phi(t)} \|_2^2 = \| (f - f_{\phi(t)})*K_n \|_2^2 \\
		&=& \sum_{\gamma \in \Gamma} \left| \widehat{K_n}(\gamma) \right|^2 \left|\widehat{f}(\gamma) - \widehat{f_{\phi(t)}}(\gamma) \right|^2 \\
		&\leq & \| \widehat{K_n} \|_1 d(t)^2 \leq \kappa_n^2 d(t)^2.
		\end{eqnarray*}
		The same estimate holds for $\| f_n - (f_n)_{\psi(t)} \|_1^2$. Hence $|I_n(0) - I_n(t)| \leq 2 \kappa_n d(t)$ for any $t \in G$.
		
    Since $I_n(0) = \lVert f_n^3 \rVert_1 \geq \lVert f_n \rVert_1^3 = \lVert f \rVert_1^3 \geq \delta^3$, it follows that
    \[
        I_n(t) \geq \delta^3 - 2 \kappa_n d(t) \text{ for all } t \in G.
    \]
    Note that $d(t) \leq \epsilon/(2\kappa_n)$ for $t$ in the set $D(\epsilon/(2 \kappa_n))$ and so $I_n(t) \geq \delta^3 - \epsilon$ in this set. 
    
    %Since $\eta_{n+1} \leq \epsilon \nu \left( \frac{\epsilon}{2 \kappa_n} \right)$, we can choose a constant $\eta < \frac{\epsilon}{2 \kappa_n}$ such that $\eta_{n+1} < \epsilon \nu(\eta)$. Let $k$ be the kernel corresponding to this $\eta$ found in \cref{lem:kernel_a}. Then $\lVert k \rVert_{\infty} \leq 1/\nu(\eta)$ and $k$ is supported on $B(\eta) \subseteq B(\epsilon/(2\kappa_n))$. 
    
    Let $\eta = \epsilon/(2 \kappa_n)$. In view of \cref{lem:kernel_a}, there exists a kernel $K$ supported on $D(\eta)$ such that $\lVert K \rVert_{\infty} \leq 1/\nu(\eta)$.
    % Pick $\eta = \frac{\epsilon}{2 \kappa_n}$ and let $K$ be the kernel corresponding to this $\eta$ and being supported on $B(\eta)$. The existence of $K$ is guaranteed by \cref{lem:kernel_a}. Then $\lVert K \rVert_{\infty} \leq 1/\nu(\eta)$ and $K$ is supported on $B(\eta) = B(\epsilon/(2\kappa_n))$. 
    We then have
    \[
        J'(f_{n}) := \int_G I_{n}(t) K(t) \, d \mu(t) \geq (\delta^3 - \epsilon) \int_{D(\eta)} K(t) \, d\mu(t) \geq \delta^3 - \epsilon.
    \]
    Letting $f_{st} = f_n$, $f_{er} = f_{n+1} - f_n$ and $f_{un} = f - f_{n+1}$, we obtain
    \begin{enumerate}
        \item $\lVert K \rVert_{\infty} \leq 1/\nu(\epsilon/(2\kappa_n)) \leq 1/\nu(\epsilon/(2 \kappa_M))$ (since $\nu$ is increasing on $(0, e^{1/2}/C_0) \supset (0, 1)$),

        \item $\lVert f_{st} \rVert_{\infty} = \lVert f_n \rVert_{\infty} = \lVert f * K_n \rVert_{\infty} \leq \lVert f \rVert_{\infty} \lVert K_n \rVert_1 \leq 1$, and $\lVert f_{er} \rVert_{\infty} \leq 2$, and $\lVert f_{un} \rVert_{\infty} \leq 2$,
        
        \item $\lVert f_{er} \rVert_2 = \lVert f_{n+1} - f_n \rVert_2 \leq \epsilon$,
        
        \item $\lVert \hat{f}_{un} \rVert_{\infty} \lVert K \rVert_{\infty} = \lVert \hat{f} - \hat{f}_{n+1} \rVert_{\infty} \lVert K \rVert_{\infty} < \eta_{n+1}/ \nu(\eta) \leq \epsilon$ because $\eta_{n+1} \leq \epsilon \nu(\epsilon/(2\kappa_n)) = \epsilon \nu(\eta).$
        
        \item $J'(f_{st}) = J'(f_n) \geq \delta^3 - \epsilon$.
    \end{enumerate}
    Our proof finishes. 
\end{proof}

\subsection{Proof of density results}
The goal of this section is to prove \cref{th:main-density} and \cref{th:roth-khintchine}. First we recall \cref{th:roth-khintchine} for the reader's convenience.

\begin{theorem*}[Khintchine-Roth theorem for compact abelian groups]  
Let $f: G \to [0, 1]$ be a measurable function with $\int_G f \, d \mu > \delta$. Let $\phi, \psi: G \to G$ be continuous homomorphisms such that $[G: \phi(G)], [G: \psi(G)]$ and $[G:(\phi-\psi)G]$ are finite. Then for every $\epsilon > 0$, there exists a constant $c_1$ that depends only on $\delta, \epsilon$ and the indexes above such that the set
\[
    B = \left\{t \in G: \int_G f(x) f(x+ \phi(t)) f(x + \psi(t)) \, d\mu(x) > \delta^3 - \epsilon \right\}
\]
has measure greater than $c_1$. As a consequence, there exists a constant $c_2$ that depends only on $\delta$ and indexes of $\phi(G), \psi(G), (\phi - \psi)(G)$ such that
\[
    J(f) := \iint_{G^2} f(x) f(x+ \phi(t)) f(x+ \psi(t)) d\mu(x) d\mu(t) > c_2.
\]
\end{theorem*}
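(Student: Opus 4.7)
The plan is to derive this theorem directly from the regularity lemma (Proposition 4.3). Apply the regularity lemma to $f$ with a small parameter $\epsilon' > 0$ (to be fixed in terms of $\epsilon$), obtaining a kernel $K$ with $\|K\|_\infty \leq C$ and a decomposition $f = f_{st} + f_{er} + f_{un}$ satisfying $\|f_{er}\|_2 < \epsilon'$, $\|\widehat{f}_{un}\|_\infty \|\widehat{K}\|_1 < \epsilon'$, and
\[
J'(f_{st}) := \iint_{G^2} f_{st}(x)\, f_{st}(x+\phi(t))\, f_{st}(x+\psi(t))\, K(t)\, d\mu(x) d\mu(t) > \delta^3 - \epsilon'.
\]
Write $I(t) := \int_G f(x) f(x+\phi(t)) f(x+\psi(t))\, d\mu(x)$, so the target set is $B = \{t \in G : I(t) > \delta^3 - \epsilon\}$ and $J'(f) = \int_G I(t) K(t)\, d\mu(t)$.

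Next, I would expand $J'(f) = J'(f_{st} + f_{er} + f_{un})$ trilinearly into $27$ terms. The main term is $J'(f_{st})$, and the remaining $26$ each contain at least one factor of $f_{er}$ or $f_{un}$. For terms containing an $f_{er}$ factor, bound the other two factors pointwise by $2$ (since $\|f_{st}\|_\infty, \|f_{er}\|_\infty, \|f_{un}\|_\infty \leq 2$) and integrate: by translation invariance and $\int_G K = 1$, such a term is at most $4 \|f_{er}\|_1 \leq 4 \|f_{er}\|_2 \leq 4\epsilon'$. For the terms containing $f_{un}$ but no $f_{er}$, apply Lemma 3.4 after a change of variable that moves $f_{un}$ into the first slot; this substitution produces a new pair of homomorphisms drawn from $\{\phi, \psi, \phi-\psi, -\phi, -\psi\}$, and the hypothesis that $(\phi-\psi)(G)$ has finite index is precisely what ensures that the resulting pair of homomorphisms still have finite-index images, so Lemma 3.4 applies and yields $\ll \|\widehat{f}_{un}\|_\infty \|\widehat{K}\|_1 \leq \epsilon'$. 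Summing all 26 error contributions gives $|J'(f) - J'(f_{st})| \leq C' \epsilon'$ with $C'$ depending only on the indices; choosing $\epsilon' := \epsilon / (2(1+C'))$ produces $J'(f) > \delta^3 - \epsilon/2$.

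Finally, combine this lower bound with the pointwise bound $I \leq 1$ and the definition of $B$: splitting
\[
J'(f) = \int_B I(t) K(t)\, d\mu(t) + \int_{B^c} I(t) K(t)\, d\mu(t) \leq \int_B K\, d\mu + (\delta^3 - \epsilon)\Bigl(1 - \int_B K\, d\mu\Bigr)
\]
and rearranging yields $\int_B K\, d\mu > \epsilon/2$. Since $\|K\|_\infty \leq C$, this gives $\mu(B) \geq \epsilon/(2C) =: c_1$, which depends only on $\delta$, $\epsilon$, and the indices. The consequence $J(f) \geq c_2$ follows at once from $J(f) \geq \int_B I\, d\mu \geq (\delta^3 - \epsilon)\mu(B)$ upon fixing, e.g., $\epsilon = \delta^3/2$. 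The main subtlety in the argument is the treatment of the seven error terms carrying $f_{un}$ in the second or third factor, where Lemma 3.4 is not directly applicable; this is handled by the change-of-variables step, and it is here that the finite-index hypothesis on $(\phi-\psi)(G)$ is essential.
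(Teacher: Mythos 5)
Your proof is correct and follows essentially the same route as the paper's: apply the regularity lemma, expand $J'(f)$ trilinearly, control $f_{er}$-terms via translation invariance and $\|f_{er}\|_1 \leq \|f_{er}\|_2$, control $f_{un}$-terms via the counting lemma, then use $\|K\|_\infty \leq C$ to extract a lower bound on $\mu(B)$. One nice feature of your write-up is that you spell out explicitly the change-of-variables needed when $f_{un}$ sits in the second or third slot, and correctly identify that this is precisely where the finite-index hypothesis on $(\phi-\psi)(G)$ enters---a detail the paper leaves implicit in its appeal to Lemma 2.8.
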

\begin{proof}
    Fix $\epsilon > 0$ and let constant $C$, kernel $K$ and the decomposition $f = f_{st} + f_{er} + f_{un}$ be as found in \cref{prop:regularity_lemma}. Define
    \[
        J'(f) := \iint_{G^2} f(x) f(x+ \phi(t)) f(x+ \psi(t)) K(t) \ d\mu(x) d\mu(t)
    \]
    and 
    \[
        J'(f_{st}) := \iint_{G^2} f_{st}(x) f_{st}(x + \phi(t)) f_{st}(x + \psi(t)) K(t)\ d\mu(x) d\mu(t).
    \]
    
    Applying the decomposition $f = f_{st} + f_{er} + f_{un}$ and expanding $J'(f)$, we see that the difference $J'(f) - J'(f_{st})$ will have $26$ terms. The terms that contain $f_{er}$ can be bounded by $4 \epsilon$ since for $f_1, f_2, f_3 \in L^{\infty}(G)$,
    \begin{equation*}
        \iint_{G^2} f_1(x) f_2(x + \phi(t)) f_3(x + \psi(t)) K(t) \, d\mu(x) d\mu(t) \leq \max_{i} \lVert f_i \rVert_{\infty}^2 \max_{j} \lVert f_j \rVert_1 \lVert K \rVert_1.
    \end{equation*}
    
    On the other hand, in view of \cref{lem:bourgain_lemma_2} and a change of variables if necessary, the terms containing $f_{un}$ are bounded by $O(\lVert \hat{f}_{un} \rVert_{\infty} \lVert \widehat{K} \rVert_{1})$ which is $O(\epsilon)$ thanks to the properties of the decomposition. (The implicit constant may depend on the index $[G:(\phi - \psi)(G)]$ now due to this change of variables.)
    %Note that the implicit constant depends only on the indices of $\phi(G)$ and $\psi(G)$. 
    Therefore,
    \begin{equation}
    \label{eq:J_f'_1}
        J'(f) > J'(f_{st}) - O(\epsilon) > \delta^3 - c_0 \epsilon
    \end{equation}
    where the constant $c_0$ depends only on the indexes of $\phi(G), \psi(G)$ and $(\phi - \psi)(G)$ in $G$. 
    
    Define
    \begin{equation*}
        I_f(t) = \int_G f(x) f(x+ \phi(t)) f(x+ \psi(t)) \, d\mu(x)
    \end{equation*}
    and 
    \begin{equation*}
        B = \{t \in G: I_f(t) > \delta^3 - 2 c_0 \epsilon \}. 
    \end{equation*}
    We then have
    \begin{align}
    \label{eq:J_f'_2}
    \begin{aligned}
        J'(f) = \int_G I_f(t) K(t) \, d\mu(t) = \int_B I_f(t) K(t) \, d\mu(t) + \int_{G \setminus B} I_f(t) K(t) \, d\mu(t) \leq \\ \int_B K(t) \, d\mu(t) + (\delta^3 - 2 c_0 \epsilon) \int_{G \setminus B} K(t) \, d\mu(t) \leq 
        \lVert K \rVert_{\infty} \mu(B) + (\delta^3 - 2 c_0 \epsilon).
    \end{aligned}
    \end{align}
    Combining \eqref{eq:J_f'_1} and \eqref{eq:J_f'_2}, we deduce that 
    \begin{equation*}
        \mu(B) > c_0 \epsilon/\lVert K \rVert_{\infty}.
    \end{equation*}
    Letting $c_1 = c_0 \epsilon/\lVert K \rVert_{\infty}$, we obtain the first part of the theorem. 
    
    Now we have
    \begin{equation*}
        J(f) = \int_G I_f(t) \, d\mu(t) > (\delta^3 - 2 c_0 \epsilon) c_1.
    \end{equation*}
    Letting $c_2 = c_1(\delta^3 - 2c_0 \epsilon)$, we obtain the second part of the theorem. 
\end{proof}

In order to prove \cref{th:main-density}, we need the following proposition. For our future applications, we will state and prove a slightly more general version than what is necessary.

% For our future applications, it is convenient to state the following fact separately and slightly stronger than what we need:

\begin{proposition}
\label{prop:density-bohr-stronger}
Suppose $\phi, \psi: G \rightarrow G$ are continuous homomorphisms such that $\phi(G), \psi(G), (\phi-\psi)(G)$ have finite index in $G$. Let $f : G \rightarrow [0,1]$ such that $\int_G f \, d\mu = \delta >0$. For $w \in G$, define
\[
    R(w) = \iint_{G^2} f(x + w) f(x + \phi(y)) f(x+\psi(y))  \, d\mu(x) d\mu(y)
\]
Then there are $c, k, \eta >0$ depending only on $\delta$ and the indexes above such that the set
 $\{ w \in G: R(w) > c\}$ contains a Bohr-$(k,\eta)$ set. 
\end{proposition}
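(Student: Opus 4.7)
The plan is to mimic the partition argument in Proposition \ref{prop:partition-bohr-stronger}, with the Khintchine--Roth theorem (\cref{th:roth-khintchine}) playing the role that \cref{th:counting-partition} played there. The main steps are: (i) a lower bound on $R(0)$, (ii) a Fourier-stability estimate comparing $R(w)$ to $R(0)$, and (iii) conversion of the Fourier-stability condition into a Bohr set via \cref{lem:translate}.

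First, by \cref{th:roth-khintchine} applied to $f$ with the pair $(\phi,\psi)$, we have
\[
R(0) = \iint_{G^2} f(x)f(x+\phi(y))f(x+\psi(y))\,d\mu(x)d\mu(y) \geq c_2,
\]
for some constant $c_2>0$ depending only on $\delta$ and $[G:\phi(G)], [G:\psi(G)], [G:(\phi-\psi)(G)]$. Note that the finite-index hypothesis on $(\phi-\psi)(G)$ is precisely what is needed to invoke \cref{th:roth-khintchine}, and this is why it is included in the hypothesis of the proposition.

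Second, I would write
\[
R(w) - R(0) \;=\; \iint_{G^2} \bigl(f_w(x)-f(x)\bigr)\, f(x+\phi(y))\, f(x+\psi(y)) \, d\mu(x)d\mu(y),
\]
where $f_w(x) = f(x+w)$. By \cref{lem:bourgain_lemma_2} applied with $f_1 = f_w - f$ and $f_2 = f_3 = f$, this is bounded by
\[
|R(w) - R(0)| \;\ll\; \bigl\| \widehat{f_w} - \widehat{f}\bigr\|_\infty \cdot \|f\|_2^2 \;\leq\; \bigl\| \widehat{f} - \widehat{f_w}\bigr\|_\infty,
\]
where the implicit constant depends only on $[G:\phi(G)]$ and $[G:\psi(G)]$.

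Third, pick $c' > 0$ small enough (in terms of $c_2$ and the implicit constant above) so that $\|\widehat{f}-\widehat{f_w}\|_\infty < c'$ forces $R(w) \geq c_2/2$. Applying \cref{lem:translate} with $k=1$, $f_1 = f$, and $\phi_1 = \mathrm{Id}$, the set
\[
\bigl\{ w \in G : \|\widehat{f} - \widehat{f_w}\|_\infty < c'\bigr\}
\]
contains a Bohr-$(k,\eta)$ set whose parameters $k = \lfloor 4/(c')^2\rfloor$ and $\eta = c'$ depend only on $c'$, hence only on $\delta$ and the three indices. Setting $c = c_2/2$ finishes the proof.

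There is no serious obstacle here beyond invoking the Khintchine--Roth input; the bulk of the work has already been done in establishing \cref{th:roth-khintchine} and the counting lemma \cref{lem:bourgain_lemma_2}. The only care needed is to verify that the constants produced depend only on the allowed parameters, which is automatic since every invoked lemma has the right quantitative dependence.
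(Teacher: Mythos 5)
Your proof is correct and follows essentially the same route as the paper's: bound $R(0)$ from below via \cref{th:roth-khintchine}, use \cref{lem:bourgain_lemma_2} (with $K \equiv 1$) to get the stability estimate $|R(w)-R(0)| \ll \|\widehat f - \widehat{f_w}\|_\infty$, and convert the resulting smallness condition into a Bohr set via \cref{lem:translate}. The only difference is that you spell out the choice $f_1 = f_w - f$, $f_2=f_3=f$ and the resulting parameters explicitly, which the paper leaves implicit.
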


\begin{proof}
By \cref{lem:bourgain_lemma_2}, we have
\[
    |R(w) - R(0)| \ll \lVert \hat{f} - \widehat{f_w} \rVert_{\infty}
\]
where implicit constant depends only on the indexes of $\phi(G), \psi(G)$ in $G$. 
By \cref{th:roth-khintchine}, we know that $R(0) > c$ for some constant $c>0$ depending on the indexes $[G: \phi(G)]$, $[G: \psi(G)]$, $[G: (\phi - \psi)(G)]$ and $\delta$. It follows that there exists a constant $c'$ such that $R(w) > c/2$ if 
\begin{equation}
% \label{eq:br_f_w_needed}
    \lVert \hat{f} - \widehat{f_w} \rVert_{\infty} < c'.
\end{equation}
\cref{lem:translate} implies that the set of such $w$ contains a Bohr-$(k,\eta)$ set, where $k$ and $\eta$ depend only on $c'$.
\end{proof}

We can now formulate and prove our main theorem for sets of positive measure.

\begin{theorem} \label{th:main-density-true}
Let $\phi_1, \phi_2, \phi_3, \psi_1, \psi_2: G \rightarrow G$ be continuous homomorphisms satisfying the following:
\begin{enumerate}
\item $\phi_1 + \phi_2 + \phi_3=0$,
\item $\phi_1 \circ \psi_1 = \phi_2 \circ \psi_2$,
\item $\phi_3(G), \psi_1(G), \psi_2(G), (\psi_1 + \psi_2)(G)$ have finite index in $G$.
\end{enumerate}
The for any measurable set $A \subset G$, $\mu(A) = \delta >0$, the set $\phi_1(A) + \phi_2(A) + \phi_3(A)$ contains a Bohr-$(k, \eta)$ set, where $k$ and $\eta$ depend only on $\delta$ and the indexes above. 
\end{theorem}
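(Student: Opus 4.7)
The plan is to deduce Theorem \ref{th:main-density-true} from Proposition \ref{prop:density-bohr-stronger} together with Lemma \ref{lem:bohr-homo2}. The key observation is that, given the algebraic identities (1) and (2), we can find an appropriate instance of the configuration $\{x + w,\ x + \phi(y),\ x + \psi(y)\}$ in $A$ such that the homomorphisms $\phi_1, \phi_2, \phi_3$ turn it into an element of $\phi_1(A) + \phi_2(A) + \phi_3(A)$ depending only on $w$.

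Concretely, I would apply Proposition \ref{prop:density-bohr-stronger} to $f = 1_A$ with the choices $\phi := \psi_1$ and $\psi := -\psi_2$. The hypotheses are satisfied: $\phi(G) = \psi_1(G)$ has finite index by (3); since $\psi_2(G)$ is a subgroup, $\psi(G) = -\psi_2(G) = \psi_2(G)$ also has finite index; and $(\phi - \psi)(G) = (\psi_1 + \psi_2)(G)$ has finite index by (3). The proposition produces a Bohr-$(k_0, \eta_0)$ set $B_0 \subseteq G$, with $k_0, \eta_0$ depending only on $\delta$ and the indices in (3), such that for every $w \in B_0$ there exist $x, y \in G$ with
\[
x + w,\ x + \psi_1(y),\ x - \psi_2(y) \in A.
\]
Using (1) and (2), a direct computation gives
\[
\phi_1(x + \psi_1(y)) + \phi_2(x - \psi_2(y)) + \phi_3(x + w) = (\phi_1 + \phi_2 + \phi_3)(x) + (\phi_1\psi_1 - \phi_2\psi_2)(y) + \phi_3(w) = \phi_3(w),
\]
so $\phi_3(w) \in \phi_1(A) + \phi_2(A) + \phi_3(A)$ for every $w \in B_0$; that is, $\phi_3(B_0) \subseteq \phi_1(A) + \phi_2(A) + \phi_3(A)$.

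To finish, I apply Lemma \ref{lem:bohr-homo2} to the homomorphism $\phi_3$, whose image has finite index by (3), to conclude that $\phi_3(B_0)$ contains a Bohr-$(k, \eta)$ set whose parameters depend only on $k_0, \eta_0$ and $[G:\phi_3(G)]$, hence only on $\delta$ and the indices in (3). The only non-routine step is spotting the right substitution: the choice of $-\psi_2$ (instead of $\psi_2$) is precisely what aligns the finite-index hypothesis on $(\psi_1 + \psi_2)(G)$ with the requirement of Proposition \ref{prop:density-bohr-stronger}, while condition (2) is exactly what makes the $y$-dependent terms cancel in the identity above. There is no real analytic obstacle, since all the hard work—the regularity-lemma-based counting and the Fourier-translation argument—is already packaged into Proposition \ref{prop:density-bohr-stronger}.
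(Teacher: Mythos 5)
Your proof is correct and follows the same route as the paper: apply Proposition \ref{prop:density-bohr-stronger} with $\phi = \psi_1$, $\psi = -\psi_2$, verify the algebraic identity to place $\phi_3(B_0)$ inside $\phi_1(A)+\phi_2(A)+\phi_3(A)$, and finish with Lemma \ref{lem:bohr-homo2}. If anything, you are a touch more careful than the paper's exposition, which says it substitutes $\psi_2$ for $\psi$ but then implicitly uses $-\psi_2$ in the identity, exactly as you make explicit.
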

\begin{proof}
Applying \cref{prop:density-bohr-stronger} for $f=1_A$ and $\psi_1, -\psi_2$ in place of $\phi$ and $\psi$, we see that there exists a Bohr-$(k,\eta)$ set $B$ such that for all $w \in B$, there are $x, y \in G$ such that
\[
x+w , x+\psi_1(y) \textup{ and } x-\psi_{2}(y) \in A.
\]
Note that
\[
\phi_3(x + w) + \phi_1(x+\psi_1(y)) + \phi_2(x-\psi_2(y)) = \phi_3(w)
\]
and so that $\phi_1(A) + \phi_2(A) + \phi_3(A) \supseteq \phi_3(B)$.
Our theorem then follows from \cref{lem:bohr-homo2}.
\end{proof}

\cref{th:main-density} is now a special case of \cref{th:main-density-true} with $\psi_1 = \phi_2$ and $\psi_2 = \phi_1$. Note that the condition of \cref{th:main-density-true} is met because $\phi_1 + \phi_2 = - \phi_3$ and so
\[
    [G: (\psi_1 + \psi_2)(G)] = [G:(\phi_1 + \phi_2)(G)] = [G:(-\phi_3)(G)] = [G: \phi_3(G)]
\]
which is finite.

\begin{remark} \label{rk:auto}
Here we explain why the condition that $\phi_1, \phi_2, \phi_3$ commute in \cref{th:main-density} is not necessary if one of the $\phi_i$'s is an automorphism (see \cref{remark:density_commutative}). Without loss of generality, assume $\phi_1$ is an automorphism. Let $\psi_1 = \phi_1^{-1} \circ \phi_2$ and $\psi_2 = \textup{Id}$. Then the first two conditions of \cref{th:main-density-true} are satisfied. As for the third condition, we have $\psi_1(G) = \phi_1^{-1} \circ \phi_2(G)$ and $(\psi_1 + \psi_2)(G) = \phi_1^{-1} \circ (\phi_2 + \phi_1) (G)$. Both of these have finite index in $G$ by \cref{lem:composition}.
\end{remark}

\section{Bohr sets in sumsets in number fields and function fields}

\label{sec:z_and_field}

In this section we prove Theorems \ref{th:main-kr}, \ref{th:nf} and \ref{th:ff} using a strategy similar to Bergelson and Ruzsa's proof of \cref{th:br}. To prove \cref{th:br}, one could embed $A \cap [N]$ naturally in $\Z_N$, and invoke the counting result (for example, \cref{th:roth-khintchine}) in $\Z_N$. However, one has to deal with the ``wraparound effect'': A solution to $s_1 x + s_2 y + s_3z =0$ in $\Z_N$ does not necessarily correspond to a solution in $\Z$. To overcome this issue, Bergelson and Ruzsa embedded $A \cap [N]$ in $\Z_{N'}$ for some $N' \gg N$. Then $A \cap [N]$ remains dense in $\Z_{N'}$ and a solution in $\Z_{N'}$ found in $A \cap [N]$ is now a solution in $\Z$.

For partitions, the corresponding counting result would be \cref{th:counting-partition}. However, if this theorem were applied directly, we would have a partition of the whole group $\Z_{N'}$ which again causes the wrap-around effect. To avoid this problem, we need to modify \cref{th:counting-partition} so that it allows for partitions of a subset $[-N, N] \subset \Z_{N'}$ instead of the whole group. 

%Regarding partitions, in proving \cref{th:main-kr}, we cannot apply \cref{th:counting-partition} directly since \cref{th:counting-partition} is about partitions of the whole group. Thus, we have to count the actual number of certain configurations in $\Z$ (as opposed to $\Z_N$). However, our methods of proof of \cref{th:counting-partition} can be easily adapted to prove the following counting result in $\Z$:

\begin{proposition}\label{prop:counting-brauer}
For any $k, \ell, r>0$, there is a constant $c(k, \ell, r) > 0$ such that the following holds: For sufficiently large $N$, if $[-N, N] = \bigcup_{i=1}^r A_i$, then for some $1 \leq i \leq r$, we have
\[
\sum_{|x|, |y| \leq N} 1_{A_i}(\ell y) 1_{A_i}(x) 1_{A_i}(x+y) \cdots 1_{A_i}(x + ky) \geq c(k, \ell, r) N^2.
\]
(Here $1_{A_i}(n) = 0$ if $n \not \in [-N, N]$.)
\end{proposition}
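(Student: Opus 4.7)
The plan is to rerun the Hales--Jewett engine of \cref{th:mpc-hom} with the semigroup $G = \Z$ and the commuting endomorphisms $\psi(y) = \ell y$, $\phi_j(y) = j y$ for $j = 1, \ldots, k$, but to restrict the formal variables $x_1, \ldots, x_n$ to an interval short enough that no element of any configuration leaves $[-N, N]$. Let $n = n(k, \ell, r)$ and $m = m(k, \ell, r)$ be the parameters from \cref{th:mpc-hom}; every element of $\Phi_m$ is a positive integer bounded by $L := (\ell \cdot k!)^m$. Set $C = (k + \ell + 1) n L$ and $M = \lfloor N/C \rfloor$. Let $R$ denote the set of pairs $(z, t) \in S_m(x_1, \ldots, x_n)^2$ with disjoint nonempty supports. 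Then for any substitution $a = (a_1, \ldots, a_n) \in [-M, M]^n$ and any $(z, t) \in R$, the integers $\ell \, t(a)$ and $z(a) + j \, t(a)$ for $0 \leq j \leq k$ all lie in $[-N, N]$.

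For each such $a$, the partition $[-N, N] = \bigcup_{i=1}^r A_i$ pulls back under evaluation to an $r$-coloring of $S_m(x_1, \ldots, x_n)$, and \cref{th:mpc-hom} supplies a monochromatic pair $(z, t) \in R$, giving
\[
\sum_{i=1}^r \sum_{(z,t) \in R} 1_{A_i}(\ell t(a)) \, 1_{A_i}(z(a)) \prod_{j=1}^k 1_{A_i}\bigl(z(a) + j t(a)\bigr) \geq 1.
\]
Summing over $a \in [-M, M]^n$, writing $T_i$ for the left-hand side of the proposition, and swapping the order of summation, the result follows once one shows that for every fixed $(z, t) \in R$,
\[
\sum_{a \in [-M, M]^n} 1_{A_i}(\ell t(a)) \, 1_{A_i}(z(a)) \prod_{j=1}^k 1_{A_i}\bigl(z(a) + j t(a)\bigr) \leq (2M+1)^{n-2} \, T_i,
\]
since summing then yields $(2M+1)^n \leq r \cdot |R| \cdot (2M+1)^{n-2} \cdot \max_i T_i$, and hence $\max_i T_i \gg (2M+1)^2 \gg N^2$ with an implicit constant depending only on $k, \ell$ and $r$.

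The inner estimate is the discrete counterpart of the integration step in the proof of \cref{th:counting-partition} that uses \cref{lem:finite-index}. Write $z = \sum_{u \in U} \zeta_u x_u$ and $t = \sum_{v \in V} \xi_v x_v$ with $U, V \subseteq [n]$ disjoint and nonempty and $\zeta_u, \xi_v$ positive integers from $\Phi_m \setminus \{0\}$. The $n - |U| - |V|$ coordinates outside $U \cup V$ are free and contribute a factor $(2M+1)^{n - |U| - |V|}$; the remaining double sum over $(a_u)_{u \in U}$ and $(a_v)_{v \in V}$ is controlled by a fiber count: for any integer $z_0$, the equation $\sum_{u \in U} \zeta_u a_u = z_0$ has at most $(2M+1)^{|U| - 1}$ solutions with $a_u \in [-M, M]$ (freely choose all but one coordinate), and likewise for $t$. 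Since the integers $z(a), t(a)$ lie in $[-N, N]$ by our choice of $M$, reindexing by $(z_0, t_0) \in [-N, N]^2$ bounds the remaining sum by $(2M+1)^{|U| + |V| - 2} T_i$. The main obstacle is to calibrate $M$ correctly: the Hales--Jewett parameters $n, m$ are effectively fixed by $(k, \ell, r)$, and $M$ must simultaneously preclude overflow and satisfy $|U| + |V| \leq n$, so that the $M$-power on the right of the inner estimate wins exactly two powers of $M$ -- precisely enough to produce $\Omega(N^2)$ monochromatic configurations.
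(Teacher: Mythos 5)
Your proof is correct and follows essentially the same approach as the paper: both apply \cref{th:mpc-hom} with $\psi(y)=\ell y$, $\phi_j(y)=jy$, restrict the formal variables to a dilated interval to prevent overflow, and then control the overcounting by bounding the number of tuples evaluating to a given pair $(z_0,t_0)$ by $\ll N^{n-2}$ via the disjoint-support fiber count. Your bookkeeping via the inner estimate $\sum_{a}[\cdots]\leq(2M+1)^{n-2}T_i$ is just a slightly more explicit rendering of the paper's double-counting argument.
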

\begin{remark}
\cref{prop:counting-brauer} also follows from Frankl-Graham-R\"odl \cite[Theorem 1]{fgr}, but our proof shows that it is directly in line with \cref{th:counting-partition}. Furthermore, our proof easily generalizes to other rings such as $\Z[i]$ and $\Fq[t]$.
\end{remark}
\begin{proof}
We apply \cref{th:mpc-hom} with $\psi(y) = \ell y$ and $\phi_j(y) = jy$ for $1 \leq j \leq k$. Then there exist $m$ and $n$ depending only on $r$ and $k$ such that for any $r$-coloring of $S_m(x_1, \ldots, x_n)$, there are $x$ and $y$ of nonempty and disjoint support such that the configuration
\[
\{ \ell y, x, x+y, \ldots, x+ky\}
\] 
is monochromatic.

Note that elements of $S_m(x_1, \ldots, x_n)$ are all linear forms in $x_1, \ldots, x_n$ with bounded integer coefficients. Therefore, there is a constant $c > 0$ such that for all $x_1, \ldots, x_n \in [-cN, cN]$, $S_m(x_1, \ldots, x_n) \subset [-N, N]$. Because of this inclusion, for each $(x_1, \ldots, x_n) \in [-cN, cN]^n$, the coloring $[-N, N] = \bigcup_{i=1}^r A_i$ naturally induces an $r$-coloring of $S_m(x_1, \ldots, x_n)$. 
By \cref{th:mpc-hom}, $S_m(x_1, \ldots, x_n)$ contains a monochromatic configuration of the form $\{ \ell y, x, x+y, \ldots, x+ky\}$. There are $\gg N^n$ monochromatic configurations in $[-N, N]$ arising in this way. However, a configuration may come from many different sets $S_m(x_1, \ldots, x_n)$. %We will estimate the number of tuples $(x_1, \ldots, x_n)$ giving rise to the same configuration $\{ \ell y, x, x+y, \ldots, x+ky\}$.
We will show that the number of tuples $(x_1, \ldots, x_n)$ giving rise to the same configuration $\{ \ell y, x, x+y, \ldots, x+ky\}$ is $\ll N^{n-2}$.

Indeed, let $I, J$ be disjoint nonempty subsets of $[n]$ such that $x$ and $y$ are linear combinations with bounded coefficients of $(x_i)_{i \in I}$ and $(x_j)_{j \in J}$, respectively. For fixed $I$ and $J$, the number of choices for $(x_i)_{i \in I}$ is $\ll N^{|I|-1}$, since any choice of $(|I|-1)$ of the $x_i$'s gives at most one value for the remaining $x_i$. For the same reason, the number of choices for $(x_j)_{j \in J}$ is $\ll N^{|J|-1}$. Since there are finitely many pairs $(I, J)$, we see that the number of $(x_1, \ldots, x_n)$ that give rise to $\{ \ell y, x, x+y, \ldots, x+ky\}$ is $\ll N^{n-2}$. Hence the number of monochromatic configurations in $[N]$ is $\gg N^2$,  and we are done.
\end{proof}
%\Hnote{The fact that $x, y$ have disjoint supports is really crucial}
Our next statement is essentially a diagonalization argument.

\begin{proposition} \label{prop:counting-diagonal}
Let $\mathcal{P}$ denote an arbitrary partition $\Z = \bigcup_{i=1}^r A_i$. Then there exists some $1 \leq i \leq r$ with the following property: For every $\ell \geq 0$, there is a constant $c(\ell, \mathcal{P}) > 0$ such that 
\[
\sum_{|x|, |y| \leq N} 1_{A_i}(y) 1_{A_i}(x) 1_{A_i}(x+ \ell y) \geq c(\ell, \mathcal{P}) N^2
\]
for infinitely many $N \in \N$.
\end{proposition}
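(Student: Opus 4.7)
The plan is to proceed by contradiction, applying Theorem \ref{th:mpc-hom} to a single combined configuration that encodes one dilate per color. For $i \in [r]$ and $\ell \geq 0$, write
\[
f_{i,\ell}(N) := \sum_{|x|,|y|\leq N} 1_{A_i}(y)\,1_{A_i}(x)\,1_{A_i}(x+\ell y),
\]
so that the proposition asserts the existence of some $i\in[r]$ with $\limsup_{N\to\infty} f_{i,\ell}(N)/N^2 > 0$ for every $\ell\geq 0$.

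Suppose towards a contradiction that no such $i$ exists. Then for each $i\in[r]$ one may choose a ``bad'' dilate $\ell_i \geq 0$ such that $\lim_{N\to\infty} f_{i,\ell_i}(N)/N^2 = 0$. Collect these $r$ values, which depend only on $\mathcal{P}$, and apply Theorem \ref{th:mpc-hom} with the commuting homomorphisms $\psi(y)=y$ and $\phi_j(y)=\ell_j y$ for $j=1,\ldots,r$. This furnishes integers $m,n$ depending only on $r$ and $\ell_1,\ldots,\ell_r$ such that under any $r$-coloring of $S_m(x_1,\ldots,x_n)$ there is a monochromatic configuration
\[
\{y,\; x,\; x+\ell_1 y,\; \ldots,\; x+\ell_r y\}
\]
with $x, y$ having nonempty and disjoint supports.

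The counting step now proceeds exactly as in Proposition \ref{prop:counting-brauer}. Letting each generator $x_i$ range over $[-cN,cN]$ for a sufficiently small constant $c=c(m,n)$ ensures $S_m(x_1,\ldots,x_n) \subseteq [-N,N]$, and the induced coloring produces a monochromatic configuration of the above form. Since each configuration $(x,y)$ arises from at most $O(N^{n-2})$ choices of $(x_1,\ldots,x_n)$, for all sufficiently large $N$ we obtain at least $C_\mathcal{P} N^2$ such monochromatic configurations in $[-N,N]$, where $C_\mathcal{P}>0$ depends only on $\mathcal{P}$. Pigeonholing over the $r$ colors yields some $i^*\in[r]$ and infinitely many $N$ for which at least $(C_\mathcal{P}/r)N^2$ of these configurations all have color $i^*$.

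For such $i^*$ and $N$, discarding every constraint $x+\ell_j y \in A_{i^*}$ with $j\neq i^*$ gives $f_{i^*,\ell_{i^*}}(N) \geq (C_\mathcal{P}/r)\,N^2$, contradicting the defining property of $\ell_{i^*}$. Hence the desired $i$ exists, and the constants $c(\ell,\mathcal{P})$ are supplied by the strict positivity of the corresponding limsups. The main obstacle is the box-counting inside the Ramsey argument, which is handled verbatim as in Proposition \ref{prop:counting-brauer}; the genuinely new ingredient is the choice of a single multi-dimensional Ramsey configuration that simultaneously encodes every ``bad'' dilate, turning the union-of-bad-colors obstruction into one tractable monochromatic pattern.
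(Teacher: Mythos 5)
Your proof is correct, but it takes a genuinely different route from the paper's. The paper proceeds directly: it invokes Proposition \ref{prop:counting-brauer} (with $\ell = 1$) for each Brauer parameter $k$, obtaining a color $f(k) \in [r]$ for which the Brauer-$k$ count is $\gg N^2$ along an infinite set of $N$; it then pigeonholes to find a single color $i$ with $f(k) = i$ for infinitely many $k$; and for a target dilate $\ell$, it picks $k \geq \ell$ in this set and uses the simple monotonicity
\[
1_{A_i}(y)\,1_{A_i}(x)\,1_{A_i}(x+\ell y) \;\geq\; 1_{A_i}(y)\,1_{A_i}(x)\,1_{A_i}(x+y)\cdots 1_{A_i}(x+ky)
\]
to transfer the Brauer-$k$ lower bound to the three-term count. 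You instead argue by contradiction, choosing one ``bad'' dilate $\ell_i$ per color and running Theorem \ref{th:mpc-hom} on the single tailored configuration $\{y, x, x+\ell_1 y, \ldots, x+\ell_r y\}$, then repeating the box-counting of Proposition \ref{prop:counting-brauer} for this configuration (which, as you note, goes through verbatim since the relevant facts are only that $x,y$ are bounded-coefficient integer forms with nonempty, disjoint supports). Both arguments are sound and ultimately rest on the same Hales--Jewett machinery. What the paper's version buys is brevity and modularity: it reuses Proposition \ref{prop:counting-brauer} as a black box, with the entire new content being the one-line configuration containment above; it also constructs the good color explicitly rather than via contradiction. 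What your version buys is a certain directness --- it encodes one bad parameter per color in a single Ramsey pattern rather than diagonalizing over an a priori unbounded parameter $k$ --- but at the cost of re-deriving the counting estimate for a general multi-dilate pattern (which is, in effect, a mild generalization of Proposition \ref{prop:counting-brauer} from $\phi_j(y) = jy$ to $\phi_j(y) = \ell_j y$). In either approach the constant $c(\ell, \mathcal{P})$ unavoidably depends on $\mathcal{P}$, because the auxiliary quantity (the $k \in K$ in the paper, or the tuple $(\ell_1, \ldots, \ell_r)$ in yours) does.
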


\begin{proof}
Invoking \cref{prop:counting-brauer}, for each $k \in \N$, there is $i=f(k)$ such that for infinitely many $N$, we have
\[
\sum_{|x|, |y| \leq N} 1_{A_i}(y) 1_{A_i}(x) 1_{A_i}(x+y) \cdots 1_{A_i}(x + ky) \geq c(k,1,r) N^2.
\]
Hence there exist an $i \in \{1, \ldots, r\}$ and an infinite set $K$ such that $f(k) = i$ for all $k \in K$. 

Let $\ell$ be arbitrary and pick $k \in K, k\geq \ell$. We have, for infinitely many $N$,
\begin{multline*}
\sum_{|x|, |y| \leq N} 1_{A_{i}}(y) 1_{A_{i}}(x) 1_{A_{i}}(x+ \ell y) \\
\geq \sum_{|x|, |y| \leq N} 1_{A_{i}}(y) 1_{A_{i}}(x) 1_{A_{i}}(x+y) \cdots 1_{A_{i}}(x + ky) \geq c(k,1,r) N^2,
\end{multline*}
thus proving the desired claim.
\end{proof}

\begin{remark} In the proof above, we do not have any control on $c(k,1, r)$ since we do not have control on $k$. As a result, the constant $c(\ell, \mathcal{P})$ above depends on the partition. It is interesting to see if this dependence is indeed necessary.
\end{remark}

We can now prove \cref{th:main-kr}.

\begin{proof}[Proof of \cref{th:main-kr}(a)]
Let $\Z = \bigcup_{i=1}^r A_i$ be an arbitrary partition and $s_1, s_2 \in \Z \setminus \{ 0 \}$. 
Without loss of generality, we assume $s_1, s_2 >0$. 
For a set $A \subset \Z$ and $N >0$, we write $A^{(N)}$ to denote $A \cap [-N,N]$.

By Proposition \ref{prop:counting-brauer}, there exist $i \in [r]$ and an infinite set $\calN$ such that
\begin{equation}\label{eq:schur1}
\sum_{|x|, |y| \leq N} 1_{A_i^{(N)}}(s_1 y) 1_{A_i^{(N)}}(x) 1_{A_i^{(N)}}(x+ s_2y)  \geq c N^2
\end{equation}
for any $N \in \calN$, where $c>0$ is a constant independent of $N$.

Let $N'$ be the smallest odd integer greater than $(2s_1 + s_2 + 1)N$. We identify $\Z_{N'}$ with $[ - \frac{N'-1}{2}, \frac{N'-1}{2}]$. Then \eqref{eq:schur1} implies that

\begin{equation}\label{eq:schur2}
\sum_{x, y \in \Z_{N'}} 1_{A_i^{(N)}}(s_1 y) 1_{A_i^{(N)}}(x) 1_{A_i^{(N)}}(x+ s_2y)  \geq c' N'^2
\end{equation}
for some constant $c'>0$ independent of $N$.
Define 
\[
R(w) = \sum_{x, y \in \Z_{N'}} 1_{A^{(N)}_i}(s_1 y) 1_{A^{(N)}_i}(x+w) 1_{A^{(N)}_i}(x+ s_2 y).
\]
Then by the same argument as the proof of \cref{prop:partition-bohr-stronger}, the set $\{w \in \Z_{N'}: R(w) > 0 \}$ contains a Bohr-$(k, \eta)$ set in $\Z_{N'}$, where $k$ and $\eta$ are independent of $N$. Note that $R(w)>0$, implies there are $a, a', a'' \in A^{(N)}_i$ and $x, y \in \Z_{N'}$ such that
\[
s_1 y \equiv a, \quad x+w \equiv a', \quad \textup{and} \quad x+s_2 y \equiv a'' \pmod{N'}.
\]
Therefore,
\[
s_1 w = s_1 (x+w) - s_1 (x + s_2 y) + s_2 (s_1 y) \equiv s_1 a' - s_1 a'' + s_2 a \pmod{N'}.
\]
If $|w| \leq N$ then this congruence is an equality in $\Z$ thanks to the way we choose $N'$ and the fact that $|a|, |a'|, |a''| \leq N$. We have thus proved that, for each $N \in \calN$, there exist $x_1, \ldots, x_k \in [-\frac{N'-1}{2}, \frac{N'-1}{2}]$ such that
\[
(s_1 A_i - s_1 A_i + s_2 A_i)/s_1 \supset [-N, N] \cap \left\{ w \in \Z: 
\left| e \left(\frac{ x_j w}{N'} \right) -1 \right| < \eta, \quad \forall j=1, \ldots, k\right\}.
\]
Here we are using the notation $A/c$ defined in \eqref{eq:ring2}.

Taking $N \to \infty$, $N \in \mathcal{N}$, and passing to a subsequence if necessary,
the sequence $(\frac{x_1}{N'}, \ldots, \frac{x_k}{N'})$ converges to a point $(\alpha_1, \ldots, \alpha_k)$ in $(\R/\Z)^k$. Hence,
\begin{equation*} \label{eq:bohrset1}
(s_1 A_i - s_1 A_i + s_2 A_i)/s_1 \supset \left\{ w \in \Z: 
\left| e\left(  \alpha_j w \right) -1 \right| < \frac{\eta}{2}, \quad \forall j=1, \ldots, k\right\}.
\end{equation*}
This implies that
\begin{equation*} \label{eq:bohrset2}
 s_1 A_i - s_1 A_i + s_2 A_i
\supset  \left\{ n \in \Z: 
\left| e\left(  \frac{\alpha_j n}{s_1}  \right) -1 \right| < \frac{\eta}{2}, \quad \forall j=1, \ldots, k \right\} \cap s_1 \Z.
\end{equation*}
Since $s_1 \Z$ is a Bohr set and the intersection of two Bohr sets is a Bohr set, our proof finishes.
\end{proof}

\begin{proof}[Proof of \cref{th:main-kr}(b)] We proceed similarly to part (a), using \cref{prop:counting-diagonal} instead of \cref{prop:counting-brauer}. Let $\mathcal{P}$ be an arbitrary partition $\Z = \bigcup_{i=1}^r A_i$. Let $i$ be given by \cref{prop:counting-diagonal}. Let $s \in \Z \setminus \{0\}$ be arbitrary. Since $A - A + sA = - (A - A - sA)$, $A - A + sA$ contains a Bohr-$(k, \eta)$ set if and only if $A - A - sA$ contains a Bohr-$(k, \eta)$ set. Thus, without loss of generality, we can assume $s > 0$. There is an infinite set $\calN_s \subset \N$ such that for any $N \in \calN_s$, we have
\begin{equation} %\label{eq:schur1}
\sum_{|x|, |y| \leq N} 1_{A_i^{(N)}}(y) 1_{A_i^{(N)}}(x) 1_{A_i^{(N)}}(x+ s y) \geq c(s, \mathcal{P}) N^2,
\end{equation}
for some constant $c(s, \mathcal{P})>0$ independent of $N$. Note that
\[
w = (w+x) - (x+sy) + sy.
\]
The rest is identical to part (a).
\end{proof}

% \textcolor{red}{Here is the proof of \cref{th:kr-general}}

% \begin{proof}[Proof of \cref{th:kr-general}]
% Suppose $s_1, \ldots, s_{\ell} \in \Z \setminus \{0\}$ and $J \subseteq \{1, \ldots, \ell\}$ such that $\sum_{i \in J} s_i = 0$. Let $\bigcup_{i=1}^{\ell} A_i$ be a partition. 

% If $J = \{1, \ldots, \ell\}$ then note that for all $i$,
% \[
%     s_1 A_i + \ldots s_{\ell} A_i \supseteq s_1 A_i + s_2 A_i + (s_3 + \ldots + s_{\ell}) A_i.
% \]
% By Bergelson-Ruzsa theorem, the latter sumset contains a Bohr set for some $i$.

% On the other hand, suppose $J$ is a proper subset of $\{1, \ldots, \ell\}$. Reordering the $s_i$ if necessary, assume $J = \{1, \ldots, m\}$ for some $2 \leq m < \ell$. Let $\widetilde{s_1} = s_1$ and $\widetilde{s_2} = s_{m+1} + \ldots + s_{\ell}$. Note that $s_2 + \ldots + s_m = - \widetilde{s_1}$ and so for all $i$,
% \[
%     s_1 A_i + \ldots + s_{\ell}A_i \supseteq \widetilde{s_1} A_i - \widetilde{s_1}A_i + \widetilde{s_2} A_i.
% \]
% The latter sumset contains a Bohr set for some $i$ due to \cref{th:main-kr}. Therefore, in either case, we have $\sum_{j=1}^{\ell} s_j A_i$ contains a Bohr set for some $i$.

% Since a Bohr set must contains $0$, the converse direction follows from Rado's theorem. 
% \end{proof}

\subsection{Sumsets in \texorpdfstring{$\Z[i]$}{Z[i]}} 
% The situations in $\Z[i]$ and $\Fq[t]$ are very similar to $\Z$, though the corresponding tori are slightly different. We will be brief and highlight only the few differences.

Even though the corresponding tori in the cases of $\Z[i]$ and $\Fq[t]$ are slightly different from $\Z$, the general approaches are very similar. Therefore, we will be brief and highlight only the differences.

The following proposition is needed for the proof of \cref{th:nf}(b,c). We omit its proof since it is identical to the ones of Propositions \ref{prop:counting-brauer} and \ref{prop:counting-diagonal}.

% Using the same arguments as in Propositions \ref{prop:counting-brauer} and \ref{prop:counting-diagonal}, we can prove the following

\begin{proposition}\label{prop:counting-brauer-nf}\

\begin{enumerate}[label=(\alph*)]
    \item Let $b, a_1, \ldots, a_k \in \Z[i]$ and $r>0$. There is a constant $c = c(b, a_1, \ldots, a_k, r)$ $>0$ such that the following holds: For $N$ sufficiently large, if $[-N, N]^2 = \bigcup_{j=1}^r A_j$, then for some $1 \leq j \leq r$, we have
\[
\sum_{x, y \in [-N, N]^2} 1_{A_j}(by) 1_{A_j}(x) 1_{A_j}(x+a_1 y) \cdots 1_{A_j}(x + a_k y) \geq c N^4.
\]
(Here $1_{A_j} = 0$ outside of $[-N, N]^2$.)

\item Let $\mathcal{P}$ denote an arbitrary partition $\Z[i] = \bigcup_{j=1}^r A_j$. Then there exists some $1 \leq j \leq r$ with the following property: For each $\ell \in \Z[i]$, there is a constant $c(\ell, \mathcal{P}) > 0$ such that 
\[
\sum_{x, y \in [-N, N]^2} 1_{A_j}(y) 1_{A_j}(x) 1_{A_j}(x+ \ell y) \geq c(\ell, \mathcal{P}) N^4.
\]
for infinitely many $N \in \N$.
\end{enumerate}
\end{proposition}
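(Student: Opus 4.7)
The plan is to imitate the proofs of Propositions \ref{prop:counting-brauer} and \ref{prop:counting-diagonal}, replacing $\Z$ with $\Z[i]$ (identified with $\Z^2$). Multiplication by any fixed element of $\Z[i]$ is a commuting endomorphism of the additive semigroup $\Z[i]$, so \cref{th:mpc-hom} applies with no essential change.

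For (a), I apply \cref{th:mpc-hom} with $\psi(y) = by$ and $\phi_j(y) = a_j y$ for $j = 1, \ldots, k$. This yields integers $m, n$ depending only on $b, a_1, \ldots, a_k, r$ such that in every $r$-coloring of $S_m(x_1, \ldots, x_n)$ there exist $x, y$ with nonempty disjoint supports for which $\{by, x, x+a_1 y, \ldots, x+a_k y\}$ is monochromatic. Every element of $S_m(x_1, \ldots, x_n)$ is a fixed $\Z[i]$-linear combination of the $x_i$'s with coefficients drawn from the finite set $\Phi_m$, so I may select $c > 0$ depending only on $\Phi_m$ for which letting the $x_i$ range over $[-cN, cN]^2 \subset \Z^2 \cong \Z[i]$ forces $S_m(x_1, \ldots, x_n) \subset [-N, N]^2$.

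Given any partition $[-N, N]^2 = \bigcup_{j=1}^r A_j$, each of the $\gg N^{2n}$ tuples $(x_1, \ldots, x_n) \in ([-cN, cN]^2)^n$ then yields a monochromatic configuration of the desired shape. The multiplicity count is the one technical point: if $I, J \subseteq [n]$ are the (nonempty, disjoint) supports of $x$ and $y$, then, since $\Z[i]$ is an integral domain and each $\xi_i, \zeta_j$ is a nonzero multiplication, fixing any $|I|-1$ of the variables $(x_i)_{i \in I}$ determines the last uniquely via $\sum_{i \in I} \xi_i(x_i) = x$, and similarly for $J$; the remaining $n - |I| - |J|$ coordinates range freely over $[-cN, cN]^2$. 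Hence each configuration arises from at most $O(N^{2(|I|-1) + 2(|J|-1) + 2(n - |I| - |J|)}) = O(N^{2n-4})$ tuples. Summing over the finitely many pairs $(I, J)$ and invoking pigeonhole yields some $A_j$ containing $\gg N^4$ monochromatic configurations, proving (a).

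Part (b) then follows by a diagonalization identical to the proof of \cref{prop:counting-diagonal}. Enumerate $\Z[i] \setminus \{0\}$ as $\ell_1, \ell_2, \ldots$, and for each $k$ apply (a) with $b = 1$ and $\phi_j(y) = \ell_j y$ for $1 \leq j \leq k$ to produce an index $f(k) \in [r]$. By pigeonhole some $j \in [r]$ equals $f(k)$ for infinitely many $k$, and this single $j$ works for every $\ell \in \Z[i] \setminus \{0\}$: given $\ell = \ell_m$, the lower bound is inherited from the configuration $\{y, x, x+\ell_1 y, \ldots, x+\ell_k y\}$ for any $k \geq m$ with $f(k) = j$, since $\{y, x, x + \ell y\}$ is a subconfiguration. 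The only moderately delicate step in the whole argument is the multiplicity bookkeeping above, which carries over verbatim from the $\Z$-setting precisely because $\Z[i]$ is an integral domain.
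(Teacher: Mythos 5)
Your proposal is correct and takes essentially the same route as the paper, which simply says the proof is identical to those of Propositions~\ref{prop:counting-brauer} and~\ref{prop:counting-diagonal} (and remarks there that the argument ``easily generalizes to other rings such as $\Z[i]$ and $\Fq[t]$''). You have made explicit the one point that needs checking in the generalization: the multiplicity count relies on $\Z[i]$ being an integral domain so that a nonzero multiplication is injective, which is precisely what the paper alludes to.
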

\begin{proof}[Proof of Theorem \ref{th:nf} (a)] Suppose $A \subset \Z[i]$ has $\overline{d}(A) = \delta >0$. Then for infinitely many $N$, we have $|A^{(N)}|  \geq \delta N^2$, where $A^{(N)} = A \cap [-N, N]^2$.

Let $N' = 2(|s_1| + |s_2| + |s_3|) N + 1$. We identify $[-\frac{N'-1}{2}, \frac{N'-1}{2}]^2$ with $\Z_{N'} \times \Z_{N'}$. By \cref{th:main-density}, the set $s_1 A^{(N)} + s_2 A^{(N)} + s_3 A^{(N)}$ contains a Bohr set in $\Z_{N'} \times \Z_{N'}$, which is of the form
\[
\left\{ (w,v) \in \Z_{N'} \times \Z_{N'} : \left| e\left( \frac{wx_j + vy_j}{N'} \right) -1 \right| < \eta,  \quad \forall j=1, \ldots, k\right\}
\]
for some $x_1, \ldots, x_k, y_1, \ldots, y_k \in [-\frac{N'-1}{2}, \frac{N'-1}{2}]$, where $k$ and $\eta$ depend only on $\delta$ and $s_1, s_2, s_3$.

If $(w, v)$ is in the Bohr set above and $|w|, |v| \leq N$, then there exist $a, a', a'' \in \AN$ such that
\[
(w,v) = s_1 a + s_2 a' + s_3 a'',  
\]
where the equality is in $\Z[i]$ and not just in $\Z_{N'} \times \Z_{N'}$. Hence,
\[
s_1 A + s_2 A + s_3 A \supset [-N, N]^2 \cap \left\{ (w,v) \in \Z[i] : \left| e\left( \frac{wx_j + vy_j}{N'} \right) -1 \right| < \eta,  \quad \forall j=1, \ldots, k\right\}.
\]
Letting $N$ go to infinity along some subsequence, we have that
\[
s_1 A + s_2 A + s_3 A \supset \left\{ (w,v) \in \Z[i] : \left| e\left( w \alpha_j + v\beta_j \right) -1 \right| < \frac{\eta}{2},  \quad \forall j=1, \ldots, k\right\},
\]
where $(\alpha_1, \ldots, \alpha_k, \beta_1, \ldots, \beta_k)$ is a limit point of $( \frac{x_1}{N'}, \ldots, \frac{x_k}{N'}, \frac{y_1}{N'}, \ldots, \frac{y_k}{N'} )$, and we are done.
\end{proof}

\begin{proof}[Proof of Theorem \ref{th:nf}(b)]
Using Proposition \ref{prop:counting-brauer-nf}(a) and arguing similarly to the proof of Theorem \ref{th:main-kr}(a), we see that for some $1 \leq j \leq r$, for infinitely many $N$, we have
\[
(s_1 A_j - s_1 A_j + s_2 A_j)/s_1 \supset [-N, N]^2 \cap \left\{ (w,v) \in \Z[i] : \left| e\left( \frac{wx_j + vy_j}{N'} \right) -1 \right| < \eta,  \quad \forall j=1, \ldots, k\right\}.
\]
Letting $N$ go to infinity, we have
\[
(s_1 A_j - s_1 A_j + s_2 A_j)/s_1 \supset \left\{ (w,v) \in \Z[i] : \left| e\left( w \alpha_j + v\beta_j \right) -1 \right| < \frac{\eta}{2},  \quad \forall j=1, \ldots, k\right\},
\]
where $(\alpha_1, \ldots, \alpha_k, \beta_1, \ldots, \beta_k)$ is a limit point of $( \frac{x_1}{N'}, \ldots, \frac{x_k}{N'}, \frac{y_1}{N'}, \ldots, \frac{y_k}{N'} )$. Note that
\[
w \alpha_j + v \beta_j = \Re( (w + iv) (\alpha_j -i\beta_j))
\]
and hence,
\begin{equation}\label{eq:real_part}
s_1 A - s_1 A + s_2 A \supset \left\{ z \in \Z[i] : 
\left| e\left( \Re \left( z \frac{\alpha_j - i \beta_j}{s_1} \right) \right) -1 \right| < \frac{\eta}{2},  \quad \forall j=1, \ldots, k
\right\} \cap s_1 \Z[i]. 
\end{equation}
Since $\Re: \Z[i] \to \R$ is a homomorphism, for any $a \in \C$, $z \mapsto e(\Re(az))$ is a continuous homomorphism from $\Z[i] \to S^1$. Therefore, the right hand side of \eqref{eq:real_part} is a Bohr set by \cref{lem:subgroup-bohr}.
\end{proof}

The proof of Theorem \ref{th:nf}(c) is similar to part (b), using Proposition \ref{prop:counting-brauer-nf}(b) instead of Proposition \ref{prop:counting-brauer-nf}(a).

\subsection{Sumsets in \texorpdfstring{$\Fq[t]$}{Fq[t]}} Let $p$ be a prime and $q$ be a power of $p$. First, let us introduce some standard facts about $\Fq[t]$. 
Let $\K=\Fq(t)$ be the field of fractions of $\Fq[t]$. For $f/g \in \K$ we define $|f/g|=q^{\deg(f)-\deg(g)}$ and $|0| =0$. The completion of $\K$ with respect to $|\cdot|$ is 
$\Kinf=\Fqt = \left\{ \sum_{i=-\infty}^n a_i t^i: a_i \in \Fq, n \in \Z \right\}$. Let $\T_q = \left\{ \sum_{i=-\infty}^{-1} a_i t^i: a_i \in \Fq \right\}$. Then $\Fq[t], \K, \Kinf, \T_q$ are the analogs of $\Z, \Q, \R$ and $\R/\Z$, respectively.

For $x \in \Fq$, we write $e_q(x) = e\left( \frac{\textup{Tr} (x)}{p} \right)$, where $\textup{Tr}: \Fq \rightarrow \Fp$ is the trace map.\footnote{That is, $\textup{Tr}(x)$ is the trace of the $\F_p$-linear map $y \mapsto xy$ from $\Fq$ to $\Fq$, when $\Fq$ is viewed as a $\F_p$-vector space. In particular, $\textup{Tr}(x) \in \F_p$.} It can be checked that $x \mapsto e_q(ax)$ (where $a \in \Fq$) are all the additive characters of $\Fq$.

If $\alpha = \sum_{i=-\infty}^n a_i t^i \in \Kinf$, we write $(\alpha)_{-1} = a_{-1}$ and $E(\alpha) = e_q(a_{-1})$. It can be checked that $f \mapsto E( f \alpha)$, where $\alpha \in \T_q$, are all the continuous characters of $\Fq[t]$. This also shows that $\T_q$ is the dual of $\Fq[t]$.

Any Bohr set $B$ in $\Fq[t]$ is of the form
\[
B = \left\{ f \in \Fq[t]: \left| E(f \alpha_i) -1 \right| < \eta \textup{ for } i=1, \ldots, k \right\},
\]
where $\alpha_1, \ldots, \alpha_k \in \T_q$. If $\eta < |e(1/p)-1|$ then 
\[
B = \left\{ f \in \Fq[t]: \textup{Tr}((f \alpha_i)_{-1}) =0 \textup{ for } i=1, \ldots, k \right\}.
\]
This is an $\Fp$-subspace and not necessarily an $\Fq$-subspace. However, it contains the $\Fq$-subspace
\[
\left\{ f \in \Fq[t]: (f \alpha_i)_{-1} =0 \textup{ for } i=1, \ldots, k \right\}.
\]
We write $G_N = \{ f\in \Fq[t]: \deg(f) < N\}$. For a set $A \subset \Fq[t]$, we write $\AN$ for $A \cap G_N$. 
%Moreover, for each $N$, we fix a polynomial $P_N \in \Fq[t]$ of degree $N$. Then $G_N \cong \Fq[t]/(P_N)$. While $G_N$ is already a group, we work with $\Fq[t]/(P_N)$  since the multiplication $f \mapsto sf$ is a homomorphism on the latter.

Using the same arguments as in Propositions \ref{prop:counting-brauer} and \ref{prop:counting-diagonal}, we can prove the following:

\begin{proposition}\label{prop:counting-brauer-ff}\

\begin{enumerate}[label=(\alph*)]
    \item Let $b, a_1, \ldots, a_k \in \Fq[t]$ and $r>0$. There is a number $c=c(q, b, a_1, \ldots, a_k, r) >0$ such that the following holds. For $N$ sufficiently large, if $G_N = \bigcup_{j=1}^r A_i$, then for some $1 \leq i \leq r$, we have
\[
\sum_{x, y \in G_N} 1_{A_i}(by) 1_{A_i}(x) 1_{A_i}(x+a_1 y) \cdots 1_{A_i}(x + a_k y) \geq c q^{2N}.
\]
(Here we define $1_{A_i} = 0$ outside of $G_N$.)

\item Let $\mathcal{P}$ denote an arbitrary partition $\Fq[t] = \bigcup_{i=1}^r A_i$. Then there exists some $1 \leq i \leq r$ with the following property: For each $\ell \in \Fq[t]$, there is a constant $c(\ell, \mathcal{P}) > 0$ such that 
\[
\sum_{x, y \in G_N} 1_{A_i}(y) 1_{A_i}(x) 1_{A_i}(x+ \ell y) \geq c(\ell, \mathcal{P}) q^{2N}
\]
for infinitely many $N \in \N$.
\end{enumerate}
\end{proposition}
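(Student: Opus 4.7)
The plan is to transfer the proofs of \cref{prop:counting-brauer} and \cref{prop:counting-diagonal} to the function field setting, with $G_N$ playing the role of $[-N,N]$ and $\Fq[t]$ playing the role of $\Z$. The key structural feature we will use is that $\Fq[t]$ is an integral domain, so that multiplication by any nonzero polynomial is injective.

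For part (a), I would apply \cref{th:mpc-hom} to the commuting additive endomorphisms $\psi(y) = by$ and $\phi_j(y) = a_j y$ on the abelian semigroup $(\Fq[t], +)$. This produces constants $m, n$ depending only on $r$ and $k$ such that any $r$-coloring of $S_m(x_1,\ldots,x_n)$ contains a monochromatic configuration $\{\psi(y), x, x+\phi_1(y), \ldots, x+\phi_k(y)\}$ with $x, y$ of nonempty and disjoint supports. Every element of $S_m(x_1,\ldots,x_n)$ is an $\Fq[t]$-linear combination of the $x_i$'s whose coefficients (the nonzero elements of $\Phi_m$ viewed as polynomials) have degree at most some $D = D(m, b, a_1,\ldots,a_k)$; in particular, if $x_1, \ldots, x_n \in G_{N-D}$, then $S_m(x_1,\ldots,x_n) \subseteq G_N$. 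Thus each of the $q^{n(N-D)}$ such tuples contributes a monochromatic configuration of the required shape inside $G_N$. The fiber bound is the direct analog of the one in \cref{prop:counting-brauer}: for fixed disjoint nonempty supports $I, J \subset [n]$ and fixed coefficient maps, once we fix $|I|-1$ of the $x_i$'s for $i \in I$, the remaining coordinate is determined by $x$, because the nonzero element of $\Phi_m$ acting on it is multiplication by a nonzero polynomial and $\Fq[t]$ is an integral domain; likewise for $J$ and $y$. Summing over the finitely many admissible $(I,J)$ and coefficient data gives multiplicity $O(q^{(n-2)(N-D)})$, so at least $\gg q^{2(N-D)} \gg q^{2N}$ distinct monochromatic configurations occur, and pigeonholing on color yields part (a) with a constant $c$ depending only on $q, b, a_1,\ldots,a_k$ and $r$.

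For part (b), I would diagonalize exactly as in \cref{prop:counting-diagonal}. Enumerate $\Fq[t]$ as $\ell_1, \ell_2, \ldots$ and, for each $k \in \N$, apply part (a) with $b=1$ and $(a_1,\ldots,a_k) = (\ell_1,\ldots,\ell_k)$ to obtain a color class $f(k) \in [r]$ whose count is $\gg q^{2N}$ for all large $N$. By pigeonhole, $f$ is constantly equal to some $i$ on an infinite subset $K \subseteq \N$. Given $\ell \in \Fq[t]$, choose $k \in K$ large enough that $\ell = \ell_j$ for some $j \leq k$; the monochromatic Brauer configuration $\{y, x, x+\ell_1 y, \ldots, x+\ell_k y\}$ in $A_i$ then contains the sub-configuration $\{y, x, x+\ell y\}$, yielding the desired bound (with the constant $c(\ell,\mathcal{P})$ inherited from part (a) applied at that $k$). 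I expect no real obstacle beyond verifying the fiber bound in part (a); that step rests crucially on the integral-domain property of $\Fq[t]$ but is otherwise a routine translation of the integer argument.
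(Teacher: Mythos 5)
Your proposal is correct and follows exactly the route the paper indicates. The paper states Proposition~\ref{prop:counting-brauer-ff} with no proof beyond the remark ``Using the same arguments as in Propositions~\ref{prop:counting-brauer} and~\ref{prop:counting-diagonal}, we can prove the following,'' and you have faithfully carried out that transfer, correctly identifying the two points that need checking: the degree bound that places $S_m(x_1,\ldots,x_n)$ inside $G_N$ when the generators range over $G_{N-D}$ (the analogue of the $[-cN,cN]$ trick), and the fiber bound, which hinges on multiplication by a nonzero polynomial being injective in the integral domain $\Fq[t]$. One small caveat worth making explicit: the fiber argument requires $b, a_1,\ldots,a_k$ to be nonzero so that the elements of $\Phi_m$ in the support act by nonzero multipliers; the paper's statement of part (a) does not say this but implicitly assumes it (as in the integer case, where $\ell>0$ and $j\in\{1,\ldots,k\}$ are automatically nonzero), and the intended application in Theorem~\ref{th:ff} satisfies it.
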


\begin{proof}[Proof of Theorem \ref{th:ff}]
We will sketch the proof of \cref{th:ff}(b). Parts (a) and (c) can be proved along the same lines.

Let $\Fq[t] = \bigcup_{i=1}^r A_i$ be an arbitrary partition and $s_1, s_2 \in \Fq[t] \setminus \{ 0 \}$. 
By Proposition \ref{prop:counting-brauer-ff}(a), we know that there exist $1 \leq i \leq r$ and an infinite set $\calN$ such that
\begin{equation}\label{eq:schur3}
\sum_{x, y \in G_N} 1_{A_i^{(N)}}(s_1 y) 1_{A_i^{(N)}}(x) 1_{A_i^{(N)}}(x+ s_2y)  \gg q^{2N}
\end{equation}
for each $N \in \calN$.

Let $N'= \max(\deg s_1, \deg s_2) + N$. We fix a polynomial $P_{N'} \in \Fq[t]$ of degree $N'$.  We identify $G_{N'}$ with $\Fq[t] / (P_{N'})$, the latter being a ring and playing the role of $\Z_{N'}$ in the proof of Theorem \ref{th:main-kr}. Then \eqref{eq:schur3} implies that
\begin{equation}\label{eq:schur4}
\sum_{x, y \in \Fq[t] / (P_{N'})} 1_{A_i^{(N)}}(s_1 y) 1_{A_i^{(N)}}(x) 1_{A_i^{(N)}}(x+ s_2y)  \gg q^{2N'}.
\end{equation}

Arguing similarly to the proof of Theorem \ref{th:main-kr}(a), we find that
\[
(s_1 A_i - s_1 A_i + s_2 A_i)/s_1 \supset G_N \cap \left\{ w \in \Fq[t]: 
(w \frac{x_i}{P_{N'}})_{-1} = 0, \quad \forall j=1 \ldots, k\right\},
\]
for some $x_1, \ldots, x_k \in G_{N'}$.

Letting $N \rightarrow \infty$ and using compactness of $\T_q$, we have
\begin{equation*} 
(s_1 A_i - s_1 A_i + s_2 A_i)/s_1 \supset \left\{ w \in \Fq[t]: 
(w \alpha_i)_{-1} = 0, \quad \forall j=1, \ldots, k\right\}
\end{equation*}
for some $\alpha_1, \ldots, \alpha_k \in \T_q$. Therefore,
\[
s_1 A_i - s_1 A_i + s_2 A_i \supset \left\{ f \in \Fq[t]: 
(f \frac{\alpha_i}{s_1})_{-1} = 0, \quad \forall j=1, \ldots, k\right\} \cap s_1 \Fq[t],
\]
which is clearly an $\Fq$-subspace of bounded codimension.
\end{proof}

\section{Open questions}
\label{sec:open_question}

\cref{th:main-kr}(b) says that in any partition $\Z = \bigcup_{i=1}^r A_i$, there exists an $i \in \{1, \ldots, r\}$ such that $A_i - A_i + sA_i$ contains a Bohr set for every $s \in \Z \setminus \{0\}$. Inspired by Katznelson and Ruzsa's question, \cref{th:main-kr}(b) naturally gives rise to the following question.

\begin{question}
\label{ques:B+sA}
Suppose $A \subseteq \Z$ does not contain a Bohr set and $B \subseteq \Z$ such that $B + sA$ contains a Bohr set for every $s \in \Z \setminus \{0\}$. Must it be true that $B$ contains a Bohr set?
\end{question}
A positive answer to \cref{ques:B+sA} would lead to a resolution of Katznelson-Ruzsa's question. However, it is likely that the answer to \cref{ques:B+sA} is negative.

As mentioned in the introduction, we do not know whether the commuting conditions in \cref{th:main-partition} and \cref{th:main-density} can be removed entirely or  not. It is interesting to answer the following. 

% \textcolor{red}{Fix the following, write for Bohr sets, combining both density and partition if possible...}
% \begin{question}
% Let $G$ be a compact abelian group with normalized Haar measure $\mu$ and let $\phi, \psi: G \to G$ be commuting continuous homomorphisms. Let $f: G \to [0,1]$ be a measurable function with $\int_G f \ d \mu = \delta > 0$. Is it true that there exists a constant $c = c(\delta)$ such that
% \[
%     \iint_{G^2} f(x) f(x+ \phi(y)) f(x + \psi(y)) \ d \mu(x) \mu(y) \geq c?
% \]
% \end{question}

\begin{question} 
Can the commuting conditions in \cref{th:main-partition} and \cref{th:main-density} be removed?
\end{question}

%%% AUTHOR: optional acknowledgments here
\section*{Acknowledgments} %%  you may comment this out if no Ackno
We thank Vitaly Bergelson and John Griesmer for many helpful conversations on sets of recurrence, Bohr sets and related topics. We also thank the anonymous referee for a very thorough reading of the paper and numerous insightful comments and suggestions which helped to improve its presentation.
The second author was partially supported by National Science Foundation Grants DMS-1702296, DMS-2246921 and a travel gift from the Simons Foundation.

%%% AUTHOR:
%%% Bibliography goes here. Note that the arXiv cannot process bibtex
%%% or biber bibliographies.  Example of acceptable bibliograpy format:
\bibliographystyle{amsplain}

%% AUTHOR: You can generate such a bibliography from a .bib file by 
%% running pdflatex/bibtex/pdflatex/pdflatex and then pasting the .bbl file
%% between \begin{thebibliography} and \end{bibliography}

%%% AUTHOR: Include a short description of each author following the
%%% structure below. Use the same short tags used previously.  
%%% Use \imageat{} and \imagedot{} instead of "@" and "." in
%%% email addresses-this replaces the symbols with graphics to avoid 
%%% e-mail address harvesting from the .pdf file
\begin{dajauthors}
\begin{authorinfo}[anhle]
    Anh N. Le\\
    University of Denver\\
    Denver, CO 80210, USA\\
  anh.n.le@du.edu \\
\end{authorinfo}

\begin{authorinfo}[thoangle]
  Th\'ai Ho\`ang L\^e\\
  University of Mississippi\\
    University, MS 38677, USA\\
  leth@olemiss.edu \\
\end{authorinfo}
\end{dajauthors}

\end{document}